\journal{Discrete Applied Mathematics}
\newcommand{\s}{{\bf ms}} 
\newcommand{\cs}{{\bf cms}}
\newcommand{\ms}{{\bf mms}} 
\newcommand{\cms}{{\bf cmms}}
\newcommand{\rep}{{\bf enh}}
\newcommand{\rrev}{{\bf rev}}
\newcommand{\cmp}{{\bf cmp}}
\newcommand{\obs}{{\bf obs}}
\newtheorem{observation}{Observation}
\newtheorem{corollary}{Corollary}
\newtheorem{lemma}{Lemma}
\begin{document}

\begin{frontmatter}

\title{Contraction Obstructions for Connected\\ Graph Searching\tnoteref{t1}}
\tnotetext[t1]{The second author of this paper where funded by the Natural Science and 
Engineering Research Council of Canada, Mitacs Inc, and the University of British Columbia. 
The third and the fourth author of this paper where co-financed by the E.U. 
(European Social Fund - ESF) and Greek national funds through the Operational Program 
``Education and Lifelong Learning'' of the National Strategic Reference Framework 
(NSRF) - Research Funding Program: ``Thales. Investing in knowledge society through 
the European Social Fund''.}
\author[BC]{Micah J Best}
\ead{mjbest@cs.ubc.ca}
\author[BC,MITACS]{Arvind Gupta}
\ead{arvind@mitacs.ca}
\author[UOA,AlGCo]{Dimitrios M. Thilikos}
\ead{sedthilk@thilikos.info}
\author[UOA]{Dimitris Zoros\corref{cor1}}
\ead{dzoros@math.uoa.gr}

\cortext[cor1]{Corresponding author}
\address[BC]{Department of Computer Science, University of British Columbia, B.C., Canada.}
\address[MITACS]{Mathematics of Information Technology \& Complex Systems (MITACS).}
\address[UOA]{Department of Mathematics, National and Kapodistrian University of Athens, Greece.}
\address[AlGCo]{AlGCo project team, CNRS, LIRMM, France.}

\begin{abstract}
\noindent We consider the connected variant of the classic mixed search game where, in each search 
step, cleaned edges form a connected subgraph. We consider graph classes with bounded connected 
(and monotone) mixed  search number and we deal with the question whether the obstruction set, 
with respect of the contraction partial ordering, for those classes is finite. In general, there is no 
guarantee that those sets are finite, as graphs are not well quasi ordered under the contraction partial 
ordering relation.
 In this paper we provide the obstruction set for $k=2$, where $k$ is the number of searchers we are 
allowed to use. This set is finite, it consists of 177 graphs and completely characterises the graphs 
with connected (and monotone) mixed  search number at most 2. Our proof reveals that the ``sense 
of direction" of an optimal search searching is important for connected search which is in contrast to 
the unconnected original case. We also give a double exponential lower bound on the size of the 
obstruction set for the classes where this set is finite.
\end{abstract}

\begin{keyword}
 Graph Searching, Graph Contractions, Obstruction set
 \end{keyword}

\end{frontmatter}


\section{Introduction}

A {\em mixed searching game} is defined in terms of a
 graph representing a system of tunnels where an agile and omniscient fugitive with 
unbounded speed is hidden (alternatively, we can formulate the same problem 
considering that the tunnels are contaminated by some poisonous gas). 
The fugitive is occupying the edges of the graph and the searchers can be placed 
on its vertices. In the beginning of the game, the fugitive chooses some edge
and there are no searchers at all on the graph.
The objective of the searchers is to deploy a search strategy on the graph that 
can guarantee the capture of the fugitive. The fugitive is {\em captured}  if at some 
point he resides on an edge $e$ and one of the following capturing cases occurs.

\begin{itemize}
\item[{\bf A}:] {\em both  endpoints of $e$ are occupied by a searcher,}
\item[{\bf B}:] {\em a searcher slides along $e$}, i.e., a searcher is moved from one
endpoint of the edge to the other endpoint. 
\end{itemize}
 
\noindent A {\em search strategy} on a graph $G$ is a finite sequence ${\cal S}$ containing  
moves of the following types.

\begin{itemize}
\item[]\hskip-0.5cm{\sf p}$(v)$: placing a new searcher on a vertex $v$, 
\item[]\hskip-0.5cm{\sf r}$(v)$: deleting a searcher 
from a vertex $v$, 
\item[]\hskip-0.5cm{\sf s}$(v,u)$: sliding a searcher on $v$ along the edge $\{v,u\}$ and placing it on $u$. 
\end{itemize}
We stress that the fugitive is {\sl agile} and {\sl omniscient}, i.e. he moves at any time 
in the most favourable, for him, position and is {\sl invisible}, i.e. the searchers strategy 
is given ``in advance'' and does not depend on the moves of the fugitive during it.\medskip

Given a search ${\cal S}$, we denote by $E({\cal S},i)$ the set of edges that are clean 
after applying the first $i$ steps of ${\cal S}$, where by ``clean'' we mean that the search strategy
can guarantee that none of its edges will be occupied by the fugitive after the $i$-th step.
More formally,  we set $E({\cal 
S},0)=\emptyset$ and in step $i>0$ we define  $E({\cal S},i)$
as the set defined as follows: first consider the set $Q_{i}$ containing  
all the edges in $E({\cal S},i-1)$ plus the edges of
$E^{(i)}$ the set of edges that are  cleaned after the $i$-th move because of the application of  cases ${\bf A}$ or ${\bf B}$.
Notice that $E^{(i)}$ may be empty. In particular, it may be non-empty  in 
case the $i$-move is a placement move, will always be  empty
in case the $i$-th move is a removal move and  will surely be non-empty in case the $i$-th move
is a sliding move. In the third case, the edge along which the sliding occurs 
is called {\em the sliding edge} of $E^{(i)}$.
Then, the set $E({\cal S},i)$ is defined as the set of all edges in $Q_{i}$
minus those for which there is a path starting from them 
and finishing in an edge not in $Q_{i}$. This expresses the fact that the agile and omniscient fugitive 
could use any of these paths in order to occupy again some of the edges 
in $Q_{i}$.
In case $E({\cal S},i)\subset Q_{i}$, we say that the $i$-th move is a {\em recontamination move}. Notice that in such a 
case we have that $E({\cal S},i-1)\not \subseteq E({\cal S},i)$.

The object of a mixed search is to clear all edges using a search. We
call search ${\cal S}$ {\em complete} if at some step all  edges of $G$ are clean, i.e.
$E({\cal S},i)=E(G)$ for some $i$.
 
 \paragraph{Connected monotone mixed search number}
 
The mixed search number of a search is the maximum number of searchers on the graph during 
any move. 
 A search without recontamination moves is 
called {\em monotone}. Mixed search number has been introduced in~\cite{BienstockS91mono}. 
The mixed  search number, $\s(G)$, of a 
graph $G$ is the minimum mixed search number over all the possible complete searches on it 
(if $G$ is an edgeless graph, then 
this number is $0$). 
A search is {\em connected} if $E({\cal S},i)$ induces a 
connected subgraph of $G$ for every step $i$. Given a graph $G$, we will denote the minimum 
mixed search number over all the possible complete connected searches on it by $\cs(G)$ and 
we will call this number connected mixed  search number of 
$G$. The monotone (resp. connected monotone) mixed search number, $\ms(G)$ (resp. 
$\cms(G)$), of $G$ is the minimum mixed search number over all the possible complete 
monotone (connected monotone) searches of it (connected variants are defined only under 
the assumption that $G$ 
is a connected graph). The concept of connectivity in graph searching was introduced for the 
first time  in~\cite{BarriereFFFNST12conn}
and was motivated by application of graph searching where the ``clean" territories  should be 
maintained connected so to guarantee the safe communication between the searchers during 
the search.

\paragraph{Obstructions}

Given a graph invariant ${\bf p}$, a partial ordering relation on graphs $\unlhd$, and an integer 
$k$ we denote by 
$\obs_{\unlhd}({\cal G}[{\bf p},k])$ the set of all $\unlhd$-minimal graphs $G$ where ${\bf p}(G)>k$
and we call 
it {\em the $k$-th $\unlhd$-obstruction set for ${\bf p}$}. We also say that {\em ${\bf p}$ is closed 
under $\unlhd$} if for every two graphs $H$ and $G$, $H\unlhd G$ implies 
that ${\bf p}(H)\unlhd {\bf p}(G)$.
Clearly, if ${\bf p}$ is closed under $\unlhd$, then the  $k$-th $\unlhd$-obstruction set for ${\bf p}$
provides a complete characterisation for the class ${\cal G}_k=\{G \mid {\bf p}(G)\unlhd k\}$:
a graph belongs in ${\cal G}_{k}$ iff none of the graphs in the $k$-th $\unlhd$-obstruction set for 
${\bf p}$ is contained in $G$ with respect to the relation $\unlhd$.

\paragraph{Our results}

In this paper we are interested in obstruction characterisations for the graphs with bounded 
connected (monotone) mixed search number.
While it is known that ${\bf ms}$ is closed under taking of minors, this is not he case for $\cs$ 
and ${\bf cmms}$ 
where the connectivity requirement applies.  From Robertson and Seymour 
Theorem~\cite{RobertsonS-XX}, the  $k$-th $\leq$-obstruction 
set for ${\bf ms}$ is always finite, {where $\leq$ is the minor partial ordering relation (defined 
formally in Subsection~\ref{cont8e})}. Moreover this set 
has been found for $k=1$ (2 graphs) and $k=2$ (36 graphs) in~\cite{TakahashiUK95b}. 
However, no such result exists for the  obstruction characterisations of the connected monotone 
mixed search number.
As we prove in this paper, $\cs$ and ${\bf cmms}$ are closed under contractions. 
Unfortunately, graphs are not well quasi ordered with respect to the contraction relation, 
therefore there is no guarantee that 
the $k$-th contraction obstruction set for $\cs$ or ${\bf cmms}$ is finite for all $k$. The finiteness 
of this set 
is straightforward if $k=1$ as $\obs_{\preceq}({\cal G}[\cms,1])=\{K_{3},K_{1,3}\}$. 
In this paper we  completely resolve the case where $k=2$. We prove that 
$\obs_{\preceq}({\cal G}[\cms,2])=\obs_{\preceq}({\cal G}[\cs,2])$ and we prove that this set  is 
finite by providing
all 177 graphs that it contains. The proof of our results is based on a series of lemmata that 
confine the structure of the 
graphs with connected monotone mixed search number at most 2. We should stress that, 
in contrary to the case of ${\bf ms}$ 
the direction of searching is crucial for ${\bf cmms}$. This makes the detection of the 
corresponding obstruction sets 
more elaborated as special obstructions are required in order to force a certain sense of direction 
in the search strategy.
For this reason, our proof makes use of a more  general 
variant of the mixed search strategy that forces the searchers to start and finish to specific sets of 
vertices. Obstructions for this more general type of searching are combined in order to form the 
required obstructions for ${\bf cmms}$. We also give a double exponential lower bound on the size of 
the contraction obstruction set for the classes with bounded connected monotone search number. 
This lower bound is only meaningful for the classes where this obstruction set is finite. 

\section{Preliminary Definitions and Results} 
\label{sec:definitions}

Let $A$ be a set and let ${\cal A}=\langle a_{1},\ldots,a_{r}\rangle$
be an ordering of $A$. We denote by ${\bf prefsec}({\cal A})$
the ordering $\langle A_{0},\ldots,A_{r}\rangle$ of  subsets of $A$, where 
$A_{0}=\emptyset$ and for $i=1,\ldots,r$,   $A_{i}=\{a_{1},\ldots,a_{i}\}$. Let ${\cal A}_1$ and 
${\cal A}_2$ 
be two disjoint orderings of  $A$, we denote by ${\cal A}_1\oplus {\cal A}_2$ 
the concatenation of these two orderings.

All graphs under consideration will be finite, without loops or multiple edges.
Let $G$ be a graph and  $e=\{u,v\}\in E(G)$ an edge. The {\em edge-contraction} of $\{u,v\}$ 
(or just the {\em contraction} of $\{u,v\}$) is the operation  that deletes this edge, adds a new 
vertex $x_{uv}$ and connects this vertex to all the neighbours of $u$ and $v$ (if some multiple 
edges are created we delete them). We denote by $G/e$ the graph
obtained from $G$ by contracting edge $e$.

If $S\subseteq V(G)$ we call graph $G[S]=(S,\big\{\{u,v\}\in E(G)\mid u,v\in
S\big\})$ the \textit{subgraph of $G$ induced by $S$}. Also, given a set $F\subseteq E(G)$
 we call graph $G[F]=(\bigcup_{e\in F}e,F)$ the \textit{subgraph of $G$ induced by
$F$} and  we denote by $V(F)$ the set of vertices in $G[F]$.

We define the {\em union} of two graphs $G_1= (V_1 , E_1)$ and $G_2 = (V_2 , E_2)$ to be 
the graph $G_1\cup G_2=(V_1\cup V_2, E_1\cup E_2)$. When $V_1$ and $V_2$ are disjoint, 
we refer to this union as the {\em disjoint union} of $G_1$ and  $G_2$ (we denote it as 
$G_{1}+G_{2}$).

A vertex of a graph is called {\em pendant} if it has degree at most $1$.
An edge $e$ of a graph $G$ is \textit{pendant} if  one of its endpoints is pendant.
If both endpoints of an edge of $G$ are pendant, then we say that $e$ is an isolated edge.

We adapt the standard notations for the neighbourhood and the degree of a vertex $u\in V(G)$, i.e. 
the set off all vertices connected with $u$ by an edge and the cardinality of this set, which is 
$N_{G}(u)$ 
and $\deg_{G}(u)$ respectively.

\subsection{Rooted graph triples.}

A {\em rooted graph triple}, or, for simplicity, a {\em rooted graph}, is an ordered  triple 
$(G,S^{\rm in},S^{\rm out})$
where $G$ is a connected graph and  $S^{\rm in}$ and $S^{\rm out}$ are subsets of $V(G)$ 
($S^{\rm in}$ and $S^{\rm out}$
are not necessarily disjoint sets). If ${\bf G}=(G,S^{\rm in},S^{\rm out})$ then we also say that 
$\bf G$ is the 
graph $G$ {\em in-rooted} on $S^{\rm in}$ and {\em out-rooted} at $S^{\rm out}$.
Given a rooted graph  ${\bf G}=(G,S^{\rm in},S^{\rm out})$, we define 
$\rrev({\bf G})=(G,S^{\rm out},S^{\rm in})$.

Given a rooted graph  $(G,S^{\rm in},S^{\rm out})$, where 
$$S^{\rm in}=\{v^{\rm in}_{1},\ldots,v^{\rm in}_{|S^{\rm in}|}\}\ \ \ \mbox{and}
\ \ \ S^{\rm out}=\{v^{\rm out}_{1},\ldots,v^{\rm out}_{|S^{\rm out}|}\},$$
we define its {\em enhancement} as the graph  $\rep(G,S^{\rm in},S^{\rm out})$ obtained from 
$G$ after adding two 
vertices $u^{\rm in}$
 and  $u^{\rm out}$ and  the edges  in the sets 
$$E^{\rm in}=\{\{v^{\rm in}_{1},u^{\rm in}\},\ldots,\{v^{\rm in}_{|S^{\rm in}|},u^{\rm in}\}\}$$
and  
$$E^{\rm out}=\{\{v^{\rm out}_{1},u^{\rm out}\},\ldots,\{v^{\rm out}_{|S^{\rm out}|},u^{\rm out}\}\}.$$
From now on, we will refer to the vertices $u^{\rm in}, u^{\rm out}$
as the {\em vertex extensions} of $\rep(G,S^{\rm in},S^{\rm out})$ 
and the edge sets $E^{\rm in}$ and $E^{\rm out}$ 
as the {\em edge extensions} of $\rep(G,S^{\rm in},$ $S^{\rm out})$.

\subsection{An extension of the connected search game.}

In the above setting we assumed that searchers cannot make their first move 
in the graph before the fugitive makes his first move. Let $G$ be a graph 
and let $S^{\rm in}, S^{\rm out}\subseteq V(G)$.
A  {\em $(S^{\rm in},S^{\rm out})$-complete strategy for  $G$} is 
a search strategy ${\cal S}$ on $\rep(G,S^{\rm in},S^{\rm out})$ such that
\begin{itemize}
\item[(i)] $E({\cal S},i)=E^{\rm in}$, for some $i$,
\item[(ii)] $E({\cal S},i)\cap E^{\rm out}=\emptyset$, for every  $i$ and  
\item[(iii)] $E({\cal S},i)=E(G)\setminus E^{\rm out}$, for some $i$,
\end{itemize}
where $E^{\rm in}, E^{\rm out}$ are the {\em edge extensions} of $\rep(G,S^{\rm in},$ $S^{\rm out})$.

Based on the above definitions, we define  ${\bf ms}(G,S^{\rm in},S^{\rm out})$ as the minimum
 mixed search number over all  possible $(S^{\rm in},S^{\rm out})$-complete search strategies  for  it. 
 Similarly, we define  $\ms(G,S^{\rm in},S^{\rm out})$ and  $\cms(G,S^{\rm in},S^{\rm out})$ where, 
 in the case of connected searching, we additionally demand that $S^{\rm in}$ induces a connected 
 subgraph of $G$.
 Notice that ${\bf ms}(G)={\bf ms}(G,\emptyset,\emptyset)$ and  that this equality also holds for 
 $\ms$ and $\cms$.

\subsection{Expansions.}

Given a graph $G$ and a set $F\subseteq E(G)$, we define 
$$\partial_{G}(F)=(\bigcup_{e\in F}e)\cap(\bigcup_{e\in E(G)\setminus F}e)$$

Let $G$ be a graph and let $E_1$ and $E_2$ be subsets of $E(G)$.
An {\em $(E_1,E_2)$-expansion} of $G$ is an ordering ${\cal E}=\langle A_{1},\ldots,A_{r}\rangle$
where 
\begin{itemize}
\item[1.] For $i\in\{1,\ldots,r-1\}$, $E_1\subseteq A_{i}\subseteq E(G)\setminus E_2$.
\item[2.] For $i\in\{1,\ldots,r-1\}$, $|A_{i+1}\setminus A_{i}|\leq 1$.
\item[3.] $A_{1}=E_1$,
\item[4.] $A_{r}=E(G)\setminus E_2$.
\end{itemize}

An {\em $(E_1,E_2)$-expansion} of $G$ is {\em connected} if the following  condition holds:
\begin{itemize}
\item[5.] For $i\in\{1,\ldots,r\}$, $G[A_{i}]$ is connected.
\end{itemize}

An {\em $(E_1,E_2)$-expansion} of $G$ is {\em monotone} if the following  condition holds:

\begin{itemize}
\item[6.] $A_{1}\subseteq\cdots \subseteq A_{r}$.
\end{itemize}

Let $i\in\{1,\ldots,r-1\}$.
 The {\em cost} of an expansion ${\cal E}$ at position $i$ 
is defined as ${\bf cost}_{G}({\cal E},i)=|\partial_{G}(A_{i})|+q_i$
where  $q_i$ is equal to one if one of the following holds

\begin{itemize}
\item  $|A_{i}|\geq 2$ and $A_i\setminus A_{i-1}$ contains a pendant edge of $G$
\item  $A_{i}$ consists of only one  edge that  is an isolated edge  of $G$.
\end{itemize}

If none of the above two conditions hold then $q_i$ is equal to $0$.
The {\em cost} of the expansion ${\cal E}$, denoted as ${\bf cost}_{G}({\cal E})$, is the maximum cost 
of  ${\cal E}$ among all positions $i\in\{1,\ldots,r-1\}$.

We define ${\bf p}(G,S^{\rm in},S^{\rm out})$ as the minimum cost  that an 
$(E^{\rm in},E^{\rm out})$-expansion of 
$\rep(G,S^{\rm in},S^{\rm out})$
may have, where $E^{\rm in},E^{\rm out}$ are the edge extensions of $\rep(G,$ $S^{\rm in}$, 
$S^{\rm out})$. We also define 
${\bf mp}(G,S^{\rm in},S^{\rm out})$ 
(if we consider only monotone $(E^{\rm in},E^{\rm out})$-expansions) and 
${\bf cmp}(G,S^{\rm in},S^{\rm out})$  
(if we consider connected monotone 
$(E^{\rm in},E^{\rm out})$-expansions). We finally define $\cmp(G)=\cmp(G,\emptyset,\emptyset)$.

\begin{lemma}
\label{emptymake}
Let $(G,S^{\rm in},S^{\rm out})$ be a rooted graph and let $S_{1}^{\rm in}\subseteq S^{\rm in}$  and 
$S_{1}^{\rm out}\subseteq S^{\rm out}$, {where $G[S^{in}]$ is a connected subgraph of G}. 
Then $\cmp(G,S_1^{\rm in},S_1^{\rm out})\leq \cmp(G,S^{\rm in},S^{\rm out})$.
\end{lemma}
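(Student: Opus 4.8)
The plan is to start from a connected monotone $(E^{\rm in},E^{\rm out})$-expansion $\mathcal{E}=\langle A_{1},\ldots,A_{r}\rangle$ of $H:=\rep(G,S^{\rm in},S^{\rm out})$ realizing $c:=\cmp(G,S^{\rm in},S^{\rm out})$, and to transform it into a connected monotone $(E_{1}^{\rm in},E_{1}^{\rm out})$-expansion of $H_{1}:=\rep(G,S_{1}^{\rm in},S_{1}^{\rm out})$ of cost at most $c$; here $E_{1}^{\rm in},E_{1}^{\rm out}$ are the edge extensions of $H_{1}$. The basic observation is that $H_{1}$ is $H$ with the surplus extension edges (those joining $u^{\rm in}$ to $S^{\rm in}\setminus S_{1}^{\rm in}$ and $u^{\rm out}$ to $S^{\rm out}\setminus S_{1}^{\rm out}$) deleted, so $V(H_{1})=V(H)$; moreover, since every $A_{i}$ contains all of $E^{\rm in}$ and no edge of $E^{\rm out}$, the only difference between $A_{i}$ and $A_{i}\cap E(H_{1})$ lies in surplus \emph{in}-edges. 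One is thus tempted to take $A_{i}\cap E(H_{1})$ directly; the obstruction is connectivity, because deleting the edges from $u^{\rm in}$ to $S^{\rm in}\setminus S_{1}^{\rm in}$ can disconnect the induced subgraph --- precisely when those edges are the only thing keeping part of $S^{\rm in}$ attached to the cleaned region.

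To fix this, fix a spanning tree $T$ of $G[S^{\rm in}]$ (it exists by hypothesis, and this is the only use of the hypothesis) with edge set $E_{T}\subseteq E(G)$, and let the new expansion have two phases. In Phase~1 it grows $E_{1}^{\rm in}$ up to $E_{1}^{\rm in}\cup E_{T}$ by inserting the edges of $T$ one at a time, in an order for which every prefix induces a connected subtree of $T$ (arranged to contain a fixed vertex of $S_{1}^{\rm in}$ when $S_{1}^{\rm in}\neq\emptyset$). In Phase~2 it runs through the sets $E_{1}^{\rm in}\cup E_{T}\cup(A_{i}\cap E(G))$ for $i=1,\ldots,r$, which inserts the $G$-edges of the $A_{i}$'s one at a time in the original order, deleting any consecutive repetition (a repetition occurs exactly when the step of $\mathcal{E}$ inserts an edge already lying in $E_{T}$). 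The point is that $T$ takes over the connecting role played by $u^{\rm in}$ in $H$, so that the surplus in-edges become dispensable. Checking that the outcome is a valid connected monotone $(E_{1}^{\rm in},E_{1}^{\rm out})$-expansion is then straightforward: monotonicity and the single-new-edge condition are inherited, the first set is $E_{1}^{\rm in}$ and the last is $E_{1}^{\rm in}\cup E(G)=E(H_{1})\setminus E_{1}^{\rm out}$, and for connectivity one uses the fact that $H[A_{i}]$ being connected forces every connected component of $G[A_{i}\cap E(G)]$ to meet $S^{\rm in}$, hence to be joined to $E_{1}^{\rm in}\cup E_{T}$ inside $H_{1}$.

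For the cost, the first summand causes no trouble: if $B$ is a Phase~2 set built from $A=A_{i}$ then $V(B)=V(A)$ while the edges of $H_{1}$ outside $B$ span a subset of the vertices spanned by the edges of $H$ outside $A$, whence $\partial_{H_{1}}(B)\subseteq\partial_{H}(A)$; and for a Phase~1 set $B$ the covered vertices lie in $S^{\rm in}\cup\{u^{\rm in}\}$ with $u^{\rm in}\notin\partial_{H_{1}}(B)$, so $|\partial_{H_{1}}(B)|\le|S^{\rm in}|=|\partial_{H}(A_{1})|\le c$. The delicate part --- and the step I expect to be the main obstacle --- is the pendant-edge surcharge, which can be $1$ in $H_{1}$ while being $0$ in $H$, since a vertex of $S^{\rm in}\setminus S_{1}^{\rm in}$ loses its edge to $u^{\rm in}$. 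Here one argues on the newly inserted $G$-edge $e=\{v,w\}$: if an endpoint, say $v$, has $G$-degree~$1$ and $v\in S^{\rm in}$, then $e$ is a bridge of $G[S^{\rm in}]$, so $e\in E_{T}$, the step is a deleted repetition, and there is nothing to pay; in every remaining case in which the surcharge rises, the endpoint that became pendant in $H_{1}$ is a vertex still incident in $H$ to an extension edge (towards $u^{\rm in}$ or $u^{\rm out}$) that never enters $\mathcal{E}$, hence a vertex counted in $\partial_{H}(A)$ but not in $\partial_{H_{1}}(B)$, which supplies a compensating drop of at least $1$ in $|\partial|$. Consequently the cost of the new expansion is at most $c$, giving $\cmp(G,S_{1}^{\rm in},S_{1}^{\rm out})\le\cmp(G,S^{\rm in},S^{\rm out})$. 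A handful of degenerate configurations ($E(G)=\emptyset$, $|S^{\rm in}|\le 1$, $S_{1}^{\rm in}=\emptyset$) have to be dispatched separately, but in each of them the argument collapses to an immediate check.
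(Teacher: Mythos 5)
Your construction is essentially the paper's: both proofs pre-clean a connected spanning structure of $G[S^{\rm in}]$ anchored at $S_1^{\rm in}$ and then replay $\mathcal{E}$ with $E^{\rm in}$ swapped for $E_1^{\rm in}$. The paper uses the full set of BFS-layer edges of $G[S^{\rm in}]$ where you use a spanning tree, and it records the modified sets as $(A_i\setminus E^{\rm in})\cup E_1^{\rm in}$ where you take unions with $E_1^{\rm in}\cup E_T$; these are cosmetic differences. Your explicit treatment of the pendant-edge surcharge $q_i$ is in fact more careful than the paper's, which only argues that the boundaries are preserved.

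There is, however, one case your surcharge analysis does not cover: a Phase~1 step. If a tree edge $e=\{v,w\}\in E_T$ with $\deg_{H_1}(v)=1$ (i.e., $\deg_G(v)=1$ and $v\in S^{\rm in}\setminus S_1^{\rm in}$) is inserted during Phase~1, the surcharge of $1$ is incurred \emph{there}, not at the deleted Phase~2 repetition; your stated Phase~1 bound $|\partial_{H_1}(B)|\le|S^{\rm in}|\le c$ then only yields $c+1$. The repair is one line: at such a step all $H_1$-edges of $v$ lie in $B$ (its unique edge is $e$), so $v\notin\partial_{H_1}(B)$ even though $v$ is covered, whence $|\partial_{H_1}(B)|\le|S^{\rm in}|-1$ and the cost is still at most $|S^{\rm in}|\le c$. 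With that sentence added (and the degenerate configurations you already flag dispatched), the argument is complete; the remaining steps --- the inclusion $\partial_{H_1}(B)\subseteq\partial_H(A)$ for Phase~2 sets, the connectivity via components of $G[A_i\cap E(G)]$ meeting $S^{\rm in}$, and the compensating drop coming from the out-extension edge $\{v,u^{\rm out}\}\notin A_i$ when the newly pendant endpoint lies in $S^{\rm out}\setminus S^{\rm in}$ --- all check out.
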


\begin{proof}
Let $E^{\rm in},E^{\rm out}$ and $E^{\rm in}_1,E^{\rm out}_1$ be the edge extensions of 
$\rep(G,S^{\rm in},S^{\rm out})$
 and $\rep(G,S^{\rm in}_1,S^{\rm out}_1)$ respectively. Notice that, as 
 $S_{1}^{\rm in}\subseteq S^{\rm in}$ and 
 $S_{1}^{\rm out}\subseteq S^{\rm out}$, $E_{1}^{\rm in}\subseteq E^{\rm in}$ and 
 $E_{1}^{\rm out}\subseteq E^{\rm out}$.

Let ${\cal E}=\langle A_1, \ldots, A_r\rangle$ be an monotone and connected
$(E^{\rm in},E^{\rm out})$-expansion of 
$\rep(G,S^{\rm in},S^{\rm out})$, with cost at  most $k$. 
\medskip

As $G[S^{\rm in}]$ is a connected subgraph of $G$, for every vertex of 
$S^{\rm in}\setminus S_{1}^{\rm in}$ there exist a path connecting it with a vertex of 
$S_{1}^{\rm in}$ that only uses vertices of $S^{\rm in}$. We define the following edge sets:

\begin{itemize}
\item $E_{1}^{1}$ contains all edges that have a vertex of $S_{1}^{\rm in}$ and a vertex of 
$S^{\rm in}\setminus S_{1}^{\rm in}$ as endpoints. Let 
$V_1=(\bigcup_{e\in E_{1}^{1}}e)\setminus  S_{1}^{\rm in}$, then  $E_{1}^{2}$ contains all 
edges that have both endpoints in $V_1$.
\item $E_{j}^{1}$ contains all edges that have a vertex of $V_{j-1}$ and a vertex of 
$S=S^{\rm in}\setminus (S_{1}^{\rm in}\cup(\bigcup_{l=1\ldots,j-1}V_{l}))$ as endpoints. 
Let $V_j=(\bigcup_{e\in E_{j}^{1}}e)\setminus  S$, then  $E_{j}^{2}$ contains all edges that have 
both endpoints in $V_j$.
\end{itemize}

For each edge set $E_{j}^{i}$, $1\leq j\leq d$ and $i\in\{1,2\}$, where $d$ is the maximum 
distance between a vertex of $S^{\rm in}\setminus S_{1}^{\rm in}$ to some vertex in 
$S_{1}^{\rm in}$, we define arbitrarily an edge ordering $L_{j}^{i}$. We then define an 
ordering ${\cal E}_1$ of edge sets as follows:

\begin{itemize}
\item $A_{1}^{'}=(A_{1}\setminus E^{\rm in})\cup E_{1}^{\rm in}$
\item $A_{1+l}^{'}=A_{1+l-1}^{'}\cup \hat{A}_l$ for $l=1,\ldots, |E_{1}^{1}|$, where 
$\langle\hat{A}_1,\ldots, \hat{A}_{|E_{1}^{1}|}\rangle={\bf prefsec}(L_{1}^{1})$
\item $A_{1+|E_{1}^{1}|+l}^{'}=A_{1+|E_{1}^{1}|+l-1}^{'}\cup \hat{A}_l$ for 
$l=1,\ldots, |E_{1}^{2}|$, where $\langle\hat{A}_1,\ldots, \hat{A}_{|E_{1}^{2}|}\rangle=
{\bf prefsec}(L_{1}^{2})$
\item $A_{1+|E_{1}^{1}|+|E_{1}^{2}|+\cdots +|E_{j-1}^{1}|+ |E_{j-1}^{2}|+l}^{'}=
A_{1+|E_{1}^{1}|+|E_{1}^{2}|+\cdots +|E_{j-1}^{1}|+ |E_{j-1}^{2}|+l-1}^{'}\cup \hat{A}_l$ 
for $l=1,\ldots, |E_{j}^{1}|$, where $\langle\hat{A}_1,\ldots, \hat{A}_{|E_{j}^{1}|}\rangle=
{\bf prefsec}(L_{j}^{1})$
\item $A_{1+|E_{1}^{1}|+|E_{1}^{2}|+\cdots +|E_{j-1}^{1}|+ |E_{j-1}^{2}|+|E_{j}^{1}|+l}^{'}=
A_{1+|E_{1}^{1}|+|E_{1}^{2}|+\cdots +|E_{j-1}^{1}|+ |E_{j-1}^{2}|+|E_{j}^{1}|+l-1}^{'}$ $\cup\ \hat{A}_l$ 
for $l=1,\ldots, |E_{j}^{2}|$, where $\langle\hat{A}_1,\ldots, \hat{A}_{|E_{j}^{2}|}\rangle=
{\bf prefsec}(L_{j}^{2})$
\end{itemize}

Let $s=|E_{1}^{1}|+|E_{1}^{2}|+\cdots +|E_{d}^{1}|+ |E_{d}^{2}|$. Notice that there exist a 
$l_0\in \{2,\ldots,r\}$ such that $A_{1+s}^{'}=(A_{l_{0}}\setminus E^{\rm in})\cup E_{1}^{\rm in}$. 
We define  a second ordering ${\cal E}_2$ of edge sets as follows:  
$A_{1+s+l}^{'}=(A_{l_{0}+l}\setminus E^{\rm in})\cup E_{1}^{\rm in}$ for $l=1,\ldots,r-l_{0}$.\smallskip

Clearly ${\cal E}'={\cal E}_{1}\oplus{\cal E}_{2}$ satisfies conditions 1--4 and therefore is an 
$(E^{\rm in}_1,E^{\rm out}_1)$-expansion of 
$\rep(G,S^{\rm in}_1,S^{\rm out}_1)$. Moreover, the monotonicity and connectivity of $\cal E'$
follows from the monotonicity and connectivity of $\cal E$.\smallskip

Notice that, for every $i\in\{1,\ldots, r\}$, $\partial_{G}(A_i)=\partial_{G}((A_{i}\setminus E^{\rm in})
\cup E_{1}^{\rm in})$
therefore ${\bf cost}_{G}({\cal E}',i)\leq{\bf cost}_{G}({\cal E},i)$. From this we conclude that 
${\cal E}'$ has cost at most $k$.
\end{proof}

Let ${\bf G}_{1},\ldots,{\bf G}_{r}$ be rooted graphs such that 
${\bf G}_i=(G_i,S_{i}^{\rm in},S^{\rm out}_{i})$ where 
$V(G_{i})\cap V(G_{i+1})=S^{\rm out}_{i}=S^{\rm in}_{i+1}$, $i\in\{1,\ldots,r-1\}$.
We define, ${\bf glue}({\bf G}_{1},\ldots,{\bf G}_{r})$ $=(G_1\cup \cdots \cup G_r,S_{1}^{\rm in},S^{\rm out}_{r})$.

\begin{lemma}
\label{glue}
Let ${\bf G}_{1},\ldots,{\bf G}_{r}$ be rooted graphs such that 
${\bf G}_i=(G_i,S_{i}^{\rm in},S^{\rm out}_{i})$ where 
$V(G_{i})\cap V(G_{i+1})=S^{\rm out}_{i}=S^{\rm in}_{i+1}$, $i\in\{1,\ldots,r-1\}$. Then 
$$\cmp({\bf glue}({\bf G}_{1},\ldots,{\bf G}_{r}))\leq \max\{\cmp({\bf G}_{i})\mid i\in\{1,\ldots,r\}\}.$$
\end{lemma}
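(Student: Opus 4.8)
The plan is to construct, from optimal connected monotone expansions of the individual rooted graphs $\mathbf{G}_1,\dots,\mathbf{G}_r$, a single connected monotone expansion of $\mathbf{glue}(\mathbf{G}_1,\dots,\mathbf{G}_r)$ whose cost does not exceed the maximum of the individual costs. Write $k=\max\{\cmp(\mathbf{G}_i)\mid i\in\{1,\dots,r\}\}$, let $H=\mathbf{glue}(\mathbf{G}_1,\dots,\mathbf{G}_r)=(G_1\cup\cdots\cup G_r,S_1^{\rm in},S_r^{\rm out})$, and let $F^{\rm in},F^{\rm out}$ be the edge extensions of $\rep(H)$, with vertex extensions $w^{\rm in},w^{\rm out}$. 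By induction on $r$ it suffices to treat $r=2$: gluing $\mathbf{G}_1$ with $\mathbf{glue}(\mathbf{G}_2,\dots,\mathbf{G}_r)$ and observing (from Lemma~\ref{emptymake} or directly) that the latter has $\cmp$ at most $\max\{\cmp(\mathbf{G}_i)\mid i\geq 2\}\leq k$, provided the glued graph still has a connected in-root, which holds because $S_2^{\rm in}$ induces a connected subgraph — this is exactly the hypothesis needed to apply the earlier lemmas. So I would first reduce to $r=2$ and then concentrate on chaining two expansions.

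For $r=2$, let $\mathcal{E}^{(1)}=\langle A_1^{(1)},\dots,A_{p}^{(1)}\rangle$ be an optimal connected monotone $(E_1^{\rm in},E_1^{\rm out})$-expansion of $\rep(G_1,S_1^{\rm in},S_1^{\rm out})$ and $\mathcal{E}^{(2)}=\langle A_1^{(2)},\dots,A_{q}^{(2)}\rangle$ an optimal such expansion of $\rep(G_2,S_2^{\rm in},S_2^{\rm out})$, each of cost at most $k$. The first step is to strip off the artificial extension edges at the junction: in $\mathcal{E}^{(1)}$, after the stage where $A^{(1)}$ equals $E(G_1)\setminus E_1^{\rm out}$, every edge of $G_1$ is cleaned and the boundary with the outside is exactly $S_1^{\rm out}=S_2^{\rm in}$; symmetrically $\mathcal{E}^{(2)}$ begins with $A_1^{(2)}=E_2^{\rm in}$, which corresponds, after removing the extension vertex $u_2^{\rm in}$ and its incident edges, to the set of edges of $H$ incident to $S_2^{\rm in}$ within $G_1$. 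Concretely I would define the glued expansion $\mathcal{E}$ to run through $\mathcal{E}^{(1)}$ first — with $E_1^{\rm in}$ replaced by $F^{\rm in}$ and $E_1^{\rm out}$ replaced by the $G_1$-edges incident to $S_1^{\rm out}$ (so that the "out" boundary is realised by genuine edges of $H$ rather than extension edges) — and then continue with $\mathcal{E}^{(2)}$, again with $E_2^{\rm in}$ deleted and $E_2^{\rm out}$ replaced by $F^{\rm out}$, adding the edges of $G_2$ one at a time in the order prescribed by $\mathcal{E}^{(2)}$. The overlap at $S_2^{\rm in}$ is consistent because $G_1$ and $G_2$ meet exactly in $S_1^{\rm out}=S_2^{\rm in}$ with no edges between the two parts other than through those shared vertices. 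Conditions 1–4 (and their monotone and connected strengthenings 5–6) then transfer directly: monotonicity is immediate since each $A_i$ properly grows, and connectivity holds because each stage of $\mathcal{E}^{(1)}$ is connected, the full $G_1$ stage is connected and contains $S_2^{\rm in}$, and then each stage of $\mathcal{E}^{(2)}$ appends a connected piece of $G_2$ attached along the already-cleaned $S_2^{\rm in}$.

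It remains to bound the cost, which is the heart of the argument. For stages lying inside the $\mathcal{E}^{(1)}$-phase, the boundary $\partial_H(A_i)$ decomposes: vertices on the $F^{\rm in}$ side contribute exactly as in $\mathcal{E}^{(1)}$, and vertices on the $G_2$ side are untouched so they are all "clean from below" and contribute nothing beyond what is already counted as part of $\partial_{G_1}$ of the corresponding stage — here one uses that the only vertices of $G_1$ adjacent to uncleaned edges of $G_2$ are those in $S_1^{\rm out}$, which are precisely the vertices counted in $\partial_{\rep(G_1)}$ when the extension edges $E_1^{\rm out}$ are uncleaned. Thus $|\partial_H(A_i)|\leq |\partial_{\rep(G_1)}(A_i^{(1)})|$, and the pendant/isolated-edge correction term $q_i$ is controlled because a pendant edge of $H$ inside $G_1$ is also pendant in $\rep(G_1)$ once we account for the extension vertex, while an isolated edge of $H$ can only occur if $G_1$ is a single edge — a degenerate case handled separately. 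The symmetric estimate holds in the $\mathcal{E}^{(2)}$-phase. The main obstacle I anticipate is making the boundary bookkeeping at the junction fully rigorous: one must check that replacing the extension edges by real edges of $H$ does not create a larger boundary or an unexpected pendant/isolated edge at exactly the transition stage, and that the "$\leq 1$ new edge per step" condition survives the splicing (it does, since we concatenate two valid step-sequences, but the first step of the second phase must be verified to add at most one edge relative to the last set of the first phase — arranging the replaced $E_1^{\rm out}$/$E_2^{\rm in}$ edges to coincide as sets of $H$-edges makes this transition add zero new edges). Once these local checks are in place, $\mathbf{cost}_H(\mathcal{E})\leq k$ follows, completing the proof.
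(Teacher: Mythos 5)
Your proposal is correct and follows essentially the same route as the paper: the paper likewise concatenates the individual connected monotone expansions (doing all $r$ at once rather than inducting on $r=2$) and observes that the cost of the concatenation is bounded by the maximum of the individual costs; your version just spells out the bookkeeping with the extension edges that the paper leaves implicit. One small slip: at the junction the role of the uncleaned out-extension edges $E_1^{\rm out}$ is played by the as-yet-uncleaned edges of $G_2$ incident to $S_2^{\rm in}$, not by ``$G_1$-edges incident to $S_1^{\rm out}$'' (which must all end up cleaned by the end of the first phase), but your subsequent cost analysis makes clear this is what you intended.
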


\begin{proof}
Let $E_i^{\rm in},E_i^{\rm out}$ be the edge extensions and ${\cal E}_i=\langle A_1^i, \ldots, 
A_{l_i}^i\rangle$ 
be an monotone and connected $(E_i^{\rm in},E_i^{\rm out})$-expansion of 
$\rep(G_i,S_i^{\rm in},S_i^{\rm out})$, 
for every $i\in\{1, \dots, r\}$.
Clearly ${\cal E}=\langle A_1^1, \ldots, A_{l_1}^1, A_{l_1}^1\cup A_2^2, \ldots, A_{l_1}^1
\cup A_ {l_2}^2, \ldots, 
(\cup_{1\leq i<r} A_{l_i}^i)\cup A_2^r, \ldots, (\cup_{1\leq i<r} A_{l_i}^i)$ $\cup A_{l_r}^r\rangle$ is an 
$(E_{1}^{\rm in},E^{\rm out}_{r})$-expansion of  ${\bf glue}({\bf G}_{1},\ldots,{\bf G}_{r})$ and, 
as expansions ${\cal E}_i,\ i\in\{1, \dots, r\}$ are monotone and connected, conditions  5 and 6 hold. 

We observe that ${\bf cost}_{G_1\cup \cdots \cup G_r}({\cal E})\leq \max\{{\bf cost}_{G_1}({\cal E}_1),
\ldots, 
{\bf cost}_{G_r}({\cal E}_r)\}$, therefore $\cmp$ $({\bf glue}({\bf G}_{1},\ldots,{\bf G}_{r}))\leq \max
\{\cmp({\bf G}_{i})
\mid i\in\{1,\ldots,r\}\}$.
\end{proof}

\begin{lemma}
\label{equivalence}
For every graph $G$, ${\bf cmms}(G,S^{\rm in},S^{\rm out})={\bf cmp}(G,S^{\rm in},S^{\rm out})$.
\end{lemma}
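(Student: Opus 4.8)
The plan is to prove the two inequalities $\cmp(G,S^{\rm in},S^{\rm out}) \leq \cms(G,S^{\rm in},S^{\rm out})$ and $\cms(G,S^{\rm in},S^{\rm out}) \leq \cmp(G,S^{\rm in},S^{\rm out})$ separately, by converting a connected monotone search strategy on $\rep(G,S^{\rm in},S^{\rm out})$ into a connected monotone $(E^{\rm in},E^{\rm out})$-expansion of the same graph, and vice versa. The underlying idea is the standard correspondence, going back to the treatment of mixed search in~\cite{BienstockS91mono}, between the edge sets $E({\cal S},i)$ recorded during a monotone search and the sets $A_i$ of an expansion: in both cases one is growing a set of ``clean'' edges one edge at a time, and the number of searchers needed to guard the boundary corresponds to $|\partial_G(A_i)|$, while the extra $q_i$ term accounts for the single searcher needed to clean a pendant or isolated edge before its second endpoint can be freed. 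So at heart this is a bookkeeping lemma matching two definitions.

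For the direction $\cmp \leq \cms$, I would start from an optimal connected monotone $(S^{\rm in},S^{\rm out})$-complete strategy ${\cal S}$ with mixed search number $k$ on $H=\rep(G,S^{\rm in},S^{\rm out})$. Define $A_i = E({\cal S},i)$ and throw away repeated consecutive values and the initial empty prefix up to the first moment $E({\cal S},i)=E^{\rm in}$ (which exists by condition (i) of an $(S^{\rm in},S^{\rm out})$-complete strategy) and truncate at the first moment $E({\cal S},i)=E(H)\setminus E^{\rm out}$ (condition (iii)); condition (ii) guarantees no edge of $E^{\rm out}$ is ever added, so $A_i \subseteq E(H)\setminus E^{\rm out}$ throughout. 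Monotonicity of ${\cal S}$ gives $A_1\subseteq\cdots\subseteq A_r$ and that each step adds at most one edge (a slide adds its sliding edge; a placement adds at most the edges cleaned by capture cases, but one checks that in a monotone search a single move can only newly clean one edge at a time — this needs the usual argument that the clean region grows by at most one edge per move, or one refines ${\cal S}$ so this holds), so conditions 1--4 and 6 hold; condition 5 (connectivity) is exactly the hypothesis that ${\cal S}$ is a connected search. The cost bound ${\bf cost}_H({\cal E},i)\leq k$ comes from: the vertices in $\partial_H(A_i)$ must each carry a searcher (otherwise the fugitive recontaminates across that boundary vertex), plus one more searcher is in use during the move that cleans a pendant/isolated edge via a slide, matching the $q_i$ term.

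For the reverse direction $\cms \leq \cmp$, I would take an optimal connected monotone expansion ${\cal E}=\langle A_1,\dots,A_r\rangle$ of cost $k$ and build a search: put searchers on $\partial_H(A_1)$, and then at step $i\to i+1$, if $A_{i+1}=A_i\cup\{e\}$, place a searcher on the endpoint(s) of $e$ not already guarded and either slide or use capture case {\bf A} to clean $e$, then remove any searcher on a vertex no longer in $\partial_H(A_{i+1})$. One verifies the number of simultaneously placed searchers never exceeds $|\partial_H(A_i)|$ plus the occasional $+1$, i.e.\ $k$; the pendant/isolated $q_i$ adjustment is precisely calibrated so that cleaning such an edge (which has an endpoint of degree $\leq 1$, hence not needing a permanent guard) costs one transient extra searcher rather than a permanent one. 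Conditions (i)--(iii) of an $(S^{\rm in},S^{\rm out})$-complete strategy follow from conditions 3, 1, and 4 of the expansion respectively. I expect the main obstacle to be the careful handling of the ``at most one edge cleaned per move'' normalization in the $\cmp\leq\cms$ direction and the exact matching of the $q_i$ term with the transient searcher: one must argue that in the optimal monotone search, simultaneous cleaning of several edges in one placement move can be serialized without increasing the searcher count or breaking connectivity/monotonicity, and that the pendant-edge corner case contributes exactly $+1$ and not more. Everything else is a direct translation between the two formalisms.
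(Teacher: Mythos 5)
Your overall plan coincides with the paper's: translate a monotone connected search into an expansion by tracking the clean-edge sets, and conversely build a search that guards $\partial_{G^*}(A_i)$ as the expansion grows. But in the first direction your pivotal claim --- that ``in a monotone search a single move can only newly clean one edge at a time'' --- is false, and your two proposed remedies do not work as stated. A single placement ${\sf p}(u)$ cleans \emph{every} edge joining $u$ to an already-guarded vertex via capture case {\bf A}, and this cleaning is automatic, so you cannot ``refine ${\cal S}$ so this holds'': the search game gives you no way to clean those edges one at a time. What must be refined is the derived edge ordering, and the real work is then to show that the artificial intermediate sets created by serializing a multi-edge move do not have larger boundaries than the budget allows. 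The paper does exactly this: it orders each batch $E^{(i)}$ with the sliding edge (if any) first, and proves that every vertex of $\partial_{G^*}(A_m)$ for an intermediate position $m$ is occupied by a searcher right after the $i$-th move --- the case analysis on $q_{h_j}$ and on whether the move is a placement or a slide toward a pendant vertex is where the $+1$ in the cost is actually matched. None of that is present in your sketch, and it is the content of the lemma rather than bookkeeping.

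In the reverse direction there are two further gaps. First, your search advances one move per new \emph{vertex}, yet an expansion may add an edge $\{a,b\}$ with both endpoints already in $V(A_i)$ at a much later step; in the game that edge was already cleaned the moment both endpoints were guarded, so the clean sets $E({\cal S},\cdot)$ drift away from the $A_i$'s unless you first normalize the expansion so that whenever the vertex set grows, all edges inside it are already present (the paper's ``Expansion property'', installed by an explicit rewriting rule whose cost-preservation must itself be checked, including the pendant-edge case). Second, ``either slide or use capture case {\bf A}'' must be replaced by a precise rule --- slide from $v$ to $u$ exactly when $v$ leaves the boundary, place otherwise --- since when $|\partial_{G^*}(A_i)|=k$ and the new vertex replaces $v$ on the boundary, placing a fresh searcher would momentarily use $k+1$; and one still has to verify (the paper's Claims 1 and 2) that the guarded set is exactly the boundary at every step and that no removal or slide recontaminates, which is where connectivity of the expansion is actually used. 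You have correctly located where the difficulties lie, but the proposal defers precisely those steps, and the one explicit assertion it does make about them is incorrect.
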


\begin{proof}
Assume that $G^*=\rep(G,S^{\rm in},S^{\rm out})$ has a complete search strategy ${\cal  S}$ 
satisfying conditions 
(i) -- (iii) with cost at most $k$.
We construct an edge ordering of $E(G)$ as follows.
Observe that, because of the monotonicity of ${\cal S}$,
$E^{(i)}=E({\cal S},i)\setminus E({\cal S},i-1)$. 
For every $i\in\{1,\ldots,|{\cal S}|\}$, we define $L_{i}$
by taking any ordering of the set $E^{(i)}$
and insisting that, if $E^{(i)}$ contains some sliding edge, this edge will be 
the first edge of $L_{i}$. Let ${\cal E}=\langle A_{0},\ldots,A_{r}\rangle$ be  the sequence of 
prefixes of 
$L_{1}\oplus \ldots \oplus L_{|S|}$, including the empty set (that is $A_{0}=\emptyset$). Notice that, 
because of Condition~(i), $A_{s}=E^{\rm in}$
for some $s\in\{1,\ldots,|{\cal S}|\}$,
and,  because of Condition~(iii), $A_{t}=E(G)\setminus E^{\rm out}$, for some 
$t\in\{1,\ldots,|{\cal S}|\}$. We now claim that 
${\cal E}'=\langle A_{s},\ldots,A_{t}\rangle$
is an $(E^{\rm in},E^{\rm out})$-expansion of $G^{*}$.
Indeed, Condition~(1) holds because of Condition (ii)
and Conditions (2) -- (4) hold because of the construction of ${\cal E}'$.
Moreover, the connectivity and the monotonicity of ${\cal E}'$ follow directly from the connectivity 
and the monotonicity of ${\cal S}$.

It remains to prove that the cost of 
${\cal E}'$ is at most $k$. 
For each $j\in\{0,\ldots,|{\cal E}'|\}$
we define $i_{j}$ such that  the unique edge in $A_{j}\setminus A_{j-1}$
is an edge in $E^{(i_j)}$
and we define $h_j$ such that $A_{h_j}\setminus A_{h_j-1}$ contains the fist edge 
of $L_{i_j}$.
Notice now  that the cost of ${\cal E}'$
at positions $h_j$ to $j$ 
is upper bounded by the cost of ${\cal E}'$
at position $h_j$. Therefore, it is enough 
to prove that the cost of ${\cal E}'$
at position $h_j$ is at most $k$.
Recall that this cost is equal to 
$|\partial_{G}(A_{h_j})|+q_{h_j}$.
We distinguish two cases:\\

\noindent{\em Case 1.} If  $q_{h_j}=0$, then 
the cost of ${\cal E}'$
at position $h_j$ is equal to $|\partial_{G}(A_{h_j})|$.
As ${\cal S}$ is monotone, all vertices in $\partial_{G}(A_{h_j})$
should be occupied by searchers after the $i_{j}$-th move of ${\cal S}$
and therefore the cost of ${\cal E}'$
at position $h_j$ is at most $k$.\\

\noindent{\em Case 2.} If $q_{h_j}=1$, then the $i_{j}$-th move of ${\cal S}$
is either the placement of a searcher on a pendant vertex  $x$
or the sliding of a searcher along a pendant edge $\{y,x\}$ towards its pendant vertex $x$.  
In both cases, $x\not\in \partial_{G}(A_{h_j})$
and all vertices in $\partial_{G}(A_{h_j})$ should be occupied by searchers
after the $i_{j}$-th move. In the first case, 
there are in total at least $|\partial_{G}(A_{h_j})|+1$ searchers on the graph
and we are done. In the second case, we observe that, because 
of monotonicity,  
$\partial_{G}(A_{h_j})=\partial_{G}(A_{h_j-1})\setminus \{y\}$.
As after the $(h_j-1)$-th move all vertices of $\partial_{G}(A_{h_j-1})$ were occupied by searchers, 
we obtain that $|\partial_{G}(A_{h_j})|\leq k-1$
and thus the cost of ${\cal E}'$
at position $h_j$ is at most $k$.\\

Now assume that there exist  a monotone and connected  $(E^{\rm in},E^{\rm out})$-expansion of 
$G^*$, say ${\cal E}=\langle A_{1},\ldots,A_{r}\rangle$, with cost at most 
$k$. We can additionally assume that 
${\cal E}$ is properly monotone; this can be done by discarding additional repetitions of a set in 
${\cal E}$.

Moreover, starting from $\cal E$, we can construct a monotone and connected 
$(E^{\rm in},E^{\rm out})$-expansion of $G^*$, with cost at most $k$,
say ${\cal E}'=\langle A'_1,\ldots,A'_r\rangle$, with the following additional property:\\

\noindent{\em Expansion property:} For every $i\in\{1,\ldots,r-1\}$ for which 
$V(A'_i)\subset V(A'_{i+1})$, $A_i'$ contains all edges of $G^*$ with both endpoints in $V(A'_i)$.\\

This can be accomplished by a series of appliances of the following rule:\\

\noindent{\em Rule:} Let $V(A_i)\subset V(A_{i+1})$ for some $i$ and  let 
$L=\langle e_1,\ldots,e_n \rangle$ 
be an ordering of the edges 
$E(G^*)\setminus A_i$ with both endpoints in $V(A_i)$. For every $j\leq i$ define $A'_j=A_j$. 
Then, define
$A'_{i+1}=A_{i}\cup\{e_1\}$, $A'_{i+2}=A_{i}\cup\{e_1,e_2\}$ and so on until 
$A'_{i+n}=A_{i}\cup\{e_1,\ldots, e_n\}$. 
Finally, for every $j\geq i+n$, define $A'_j=A_j\cup\{e_1,\ldots, e_n\}$.\\

One can easily check that, after every application of this Rule, the constructed sequence of edge 
sets is indeed an 
$(E^{\rm in},E^{\rm out})$-expansion of $G^*$ and  furthermore it is monotone and connected. 
Notice that, for $j=1,\ldots, n$, $\partial_{G^*}(A'_{i+j})\subseteq\partial_{G^*}(A_i)$ and  for 
$j\geq i+n$, $|\partial_{G^*}(A'_j)|\leq|\partial_{G^*}(A_j)|$. {Moreover, if $|A_{i}|\geq 2$ and 
$A_i\setminus A_{i-1}$ contains a pendant edge of $G^*$ then  for every $j\in\{1,\ldots, n\}$, 
$|A'_{i+j}|\geq 2$ and $A'_{i+j}\setminus A'_{i+j-1}$ contains the same pendant edge of $G^*$, 
hence the cost of ${\cal E}'$ is at most $k$  (notice that if  $A_{i}$ consists of only one  edge that  
is an isolated edge  of $G^*$
then there does not exist an edge in $E(G^*)\setminus A_i$ with both endpoints in $V(A_i)$, 
therefore we do not need to apply this rule).}

For the rest of the proof, we will consider that the Expansion property holds for the given 
$(E^{\rm in},E^{\rm out})$-expansion 
of $G^*$.

Our target is to define a $(S^{\rm in},S^{\rm out})$-complete monotone search strategy 
$\cal S$ of $G^*$ with cost at most $k$.
 
The first $|S^{\rm in}|$ moves of $\cal S$ will be {\sf p}$(u^{\rm in})$ and the next $|S^{\rm in}|$ will  
be {\sf s}$(u^{\rm in},v_i^{\rm in})$. We denote this sequence of 
moves by ${\cal S}_0$. Notice that $E({\cal S}, 2|S^{\rm in}|)=A_1$.

For every vertex $u$ in the set $V^*= V(G^*)\setminus S^{\rm in}\setminus \{u^{\rm out}\}$, we define 
$l_u$ to be the first integer in $\{1,\ldots,r\}$ such that $u\in V(A_{l_u})$.

 Let $L=\langle u_1,\ldots,u_{|V^*|} \rangle$ be an ordering of $V^*$ such that $i\leq j$ when 
 $l_{u_i}\leq l_{u_j}$. 
 Notice that, for each $i\in\{1,\ldots,|V^*|\}$,  the vertex $u_{i}$ is an endpoint of the unique 
 edge $e_{i}$
in $A_{l_{u_{i}}-1}\setminus A_{l_{u_{i}}}$ and let $v_{i}$ be the other endpoint of $e_{i}$.
Notice that, because of the connectivity and the monotonicity of ${\cal E}$,  
$v_{i}\in  \partial_{G^{*}}(A_{l_{u_{i}}-1})$.
We also observe that $u_{i}$ is pendant iff $u_{i}\not\in\partial_{G^{*}}(A_{l_{u_{i}}})$.
We define $E'=\{e_{1},\ldots,e_{|V^{*}|}\}$ and  we call a set $A_j$, $j\in\{1,\ldots, r\}$, crucial iff 
$|A_{j-1}\cap E'|<|A_j\cap E'|$.

For each  $i\in\{1,\ldots,|V^*|\}$, we define a sequence 
${\cal S}_{i}$ of moves as follows:  If $v_{i}\in  \partial_{G^{*}}(A_{l_{u_{i}}})$
then the first move of ${\cal S}_{i}$ is ${\sf p}(u_i)$, otherwise it is ${\sf s}(v_{i},u_{i})$. The 
rest of the moves in ${\cal S}_{i}$ are the removals, one by one,  of the
searchers in $\partial_{G^{*}}(A_{l_{u_{i}}-1})\setminus \partial_{G^{*}}(A_{l_{u_{i}}})$. 
Then we define ${\cal S}={\cal S}_{0}\oplus {\cal S}_{1}\oplus\cdots \oplus {\cal S}_{|V^{*}|}$.

Notice that, according to the Expansion property,  all edges of the sets $A_j$, for $j=1,\ldots,l_{u_1}$ 
have both endpoints 
in $S^{\rm in}$. Moreover, for every $i\in\{1,\ldots,|V^*|-1\}$ all edges of the sets $A_j$, for $j=l_{u_i},
\ldots,l_{u_i+1}-1$, 
have both endpoints in $V(A_{l_{u_i}})$ and  all edges of the sets $A_j$, for $j=l_{u_{|V^*|}},\ldots,r$, 
have both endpoints in $V(A_{l_{u_{|V^*|}}})$. 

First we show that the following claim is true:\\

\noindent{\em Claim 1.} For  every $A_{j}$, $j\in\{1,\ldots,r\}$, the vertices of $\partial_{G^{*}} (A_{j})$
are exactly the vertices occupied by searchers after the last move of ${\cal S}_{m_j}$,
 where $m_j$ is the index of the edge in 
$(A\cap E')\setminus(A_{j-1}\cap E')$, where $A$ is the first crucial set of $\cal E$ such that 
$A_j\subseteq A$. \\
   
Clearly, this is true for $A_1=E^{\rm in}$. Assume that it holds for $A_{j'}$.

We will show that the vertices in $\partial_{G^*}(A_{j'+1})$ 
are exactly the vertices occupied by searchers after the last move of ${\cal S}_{m_{j'+1}}$.
  
If $A_{j'+1}$ is not crucial then $\partial_{G^*}(A_{j'+1})\subseteq\partial_{G^*}(A_{j'})$ and  
$m_{j'+1}=m_{j'}$, 
therefore Claim 1 holds.

Now, if $A_{j'+1}$ is crucial and  $\{e_{m_{j'+1}}\}=(A_{j'+1}\cap E')\setminus(A_{j'}\cap E')$, then 
$v_{m_{j'+1}}\in \partial_{G^*}(A_{j'})$ and therefore must be occupied by a searcher. 
We distinguish three cases:\\
   
\noindent{\em Case 1.} If $v_{m_{j'+1}}\in \partial_{G^*}(A_{j'+1})$ and $u_{m_{j'+1}}\in 
\partial_{G^*}(A_{j'+1})$, 
then $ \partial_{G^*}(A_{j'+1})= \partial_{G^*}(A_{j'})\cup\{u_{m_{j'+1}}\}$ and  the first move in 
${\cal S}_{m_{j'+1}}$ 
will be  ${\sf p}(u_{m_{j'+1}})$. \\

\noindent{\em Case 2.} If $v_{m_{j'+1}}\in \partial_{G^*}(A_{j'+1})$ and $u_{m_{j'+1}}\not\in 
\partial_{G^*}(A_{j'+1})$ 
then $\partial_{G^*}(A_{j'+1})=\partial_{G^*}(A_{j'})$.\\

\noindent{\em Case 3.} If $v_{m_{j'+1}}\not\in \partial_{G^*}(A_{j'+1})$, 
then $ \partial_{G^*}(A_{j'+1})= (\partial_{G^*}(A_{j'})\setminus\{v_{m_{j'+1}}\})\cup\{u_{m_{j'+1}}\}$,
and the first move in ${\cal S}_{m_{j'+1}}$ will be ${\sf s}(v_{m_{j'+1}},u_{m_{j'+1}})$.\\

\noindent Observe that in all three cases the Claim 1 holds.\\

Let $V_{\cal S}(i)$ be the set of vertices already visited by searchers after the $i$-th move of $\cal S$, 
and let $V_{\cal S}=\langle V_{\cal S}(1),\ldots,V_{\cal S}(r)\rangle$. 
Notice that this sequence is monotone and  that if the $i$-th move belong to the subsequence 
${\cal S}_j$, 
then $V_{\cal S}(i)=V(A_{l_{u_j}})$. We must next prove the following claim:\\

\noindent{\em Claim 2:} For every $i\in\{1,\ldots,|{\cal S}|\}$, all edges of $G^*[V_{\cal S}(i)]$ 
are clean.\\

Clearly, the claim is true for $i\in\{1,\ldots, 2\cdot |S^{\rm in}|\}$. Assume that it holds for some 
$i\in{2\cdot |S^{\rm in}|+1,\ldots,r}$, we will show that all edges of $G^*[V_{\cal S}(i+1)]$ are clean. 
We must distinguish three cases about the $(i+1)$-th move:\\

\noindent{\em Case 1.} It is a removal, say {\sf r}$(u)$. Notice that 
$G^*[V_{\cal S}(i+1)]=G^*[V_{\cal S}(i)]$, 
therefore the Claim will not be true if {\sf r}$(u)$ is a recontamination move. In this case,
there exist an edge connecting $u$ with a vertex not in $V_{\cal S}(i)$, say $v$. As 
$u\in \partial_{G^{*}}(A_{l_{u_{j}}-1})\setminus \partial_{G^{*}}(A_{l_{u_{j}}})$, 
for some $j\in\{1,\ldots,|V^*|\}$, 
all edges with $u$ as 
endpoint must belong to $A_{l_{u_{j}}}$, therefore $\{u,v\}\in A_{l_{u_{j}}}$. 
But $V_{\cal S}(i)=V(A_{l_{u_{j}}})$, a contradiction. \\

\noindent{\em Case 2.} It is a placement of searcher say {\sf p}$(u)$. By the definition of $\cal S$, 
there exist an 
edge $\{u,v\}$, where $v$ is a vertex in $V_{\cal S}(i)$. Notice that, according to our search game, 
all such 
edges are clean after  {\sf p}$(u)$, thus all edges of $G^*[V_{\cal S}(i+1)]$ are clean.\\

\noindent{\em Case 3.} It is a slide, say {\sf s}$(v_j,u_j)$, for some $j\in\{1,\dots,|V^*|\}$. 
As in the previous case,
$G^*[V_{\cal S}(i+1)]$ contains all edges of 
$G^*[V_{\cal S}(i)]$ and additional all edges with $u_j$ as the first endpoint and  a vertex 
$v\in V_{\cal S}(i)$ as the other. 
According to our search game, after the $i$-th move there must be searcher in $v_j$, 
therefore due to Claim 1, 
$v_j\in\partial_{G^*}(A_{l_{u_{j}}})$. Notice that, the Claim will not be true if {\sf s}$(v_j,u_j)$ is a 
recontamination move,
i.e., there exist an edge connecting $v_j$ with a vertex, say $u$, not in $V_{\cal S}(i)=V(A_{l_{u_j}})$.  
As 
$v_j\not\in \partial_{G^{*}}(A_{l_{u_{j}}})$, 
all edges with $u$ as 
endpoint must belong to $A_{l_{u_{j}}}$, therefore $\{v_j,u\}\in A_{l_{u_{j}}}$, a contradiction. \\

In all three cases we show that after the $(i+1)$-th move of $S$ all edges of $G^*[V_{\cal S}(i+1)]$
are clean, 
therefore Claim 2 is true.\\

We will now prove that $\cal S$ is a $(S_{1},S_{2})$-complete strategy for $G^*$. 
Clearly, Condition~(i) holds for 
every strategy starting with ${\cal S}_{0}$. Moreover, Condition~(ii) holds as 
$v^{\rm out}$ is not a vertex of $V^{*}$  and 
therefore, no placement on $u^{\rm out}$ or sliding towards $u^{\rm out}$ appears in ${\cal S}$. 
Notice that, according to Claim 2, for every $i\in\{1,\ldots,|V^*|-1\}$, $E({\cal S},|{\cal S}_0\oplus
\cdots\oplus{\cal S}_{i-1}|+1)=
\cdots=E({\cal S},|{\cal S}_0\oplus\cdots\oplus{\cal S}_{i-1}|+|{\cal S}_i|)=A_{l_{u_{i+1}}-1},$ 
and that for $i=|V^*|$, $E({\cal S},|{\cal S}_0\oplus\cdots\oplus{\cal S}_{|V^*|}|)=A_r$, therefore
Condition~(iii) holds. 

By the definition of $\cal S$, it is clear that $\cal S$ is a connected search strategy, moreover, 
according to Claim 2, $\cal S$ is monotone. 
It remains to prove that $\cal S$ has cost at most $k$. For the first $2|S^{\rm in}|$ moves, we use 
$|S^{\rm in}|={\bf cost}_{G^*}({\cal E},1)\leq k$ searchers.  Assume that after $j$ moves exactly $k$
searchers are 
occupying vertices of $G^*$ and  that the $(j+1)$-th move is ${\sf p}(u_i)$, for some 
$i\in\{i,\ldots,|V^*|\}$. Then the 
vertices in $\partial_{G^{*}}(A_{l_{u_{i}}-1})$ are exactly the vertices occupied by the $k$ searchers, 
therefore 
$|\partial_{G^{*}}(A_{l_{u_{i}}-1})|=k$. Observe that, if $u_i$ is not pendant, then
$\partial_{G^{*}}(A_{l_{u_{i}}})=\partial_{G^{*}}(A_{l_{u_{i}}-1})\cup\{u_i\}$, therefore 
$|\partial_{G^{*}}(A_{l_{u_{i}}})|=k+1$, 
a contradiction and  if $u_i$ is pendant then  $\partial_{G^{*}}(A_{l_{u_{i}}})=\partial_{G^{*}}
(A_{l_{u_{i}}-1})$ and  
the cost of $\cal E$ at position $l_{u_{i}}$ is $|\partial_{G^{*}}(A_{l_{u_{i}}})|+1=k+1$, 
again a contradiction. 
Thus, for every move of $\cal S$ at most $k$ searchers are occupying vertices of $G^*$.
\end{proof}

\subsection{Contractions.}
\label{cont8e}

Let $(G_{1},S^{\rm in}_{1},S^{\rm out}_{1})$ and $(G_{2},S^{\rm in}_{2},S^{\rm out}_{2})$ 
be  rooted graphs. We say that $(G_{1},S^{\rm in}_{1},$ $S^{\rm out}_{1})$ is 
{\em a contraction} of  $(G_{2},S^{\rm in}_{2},S^{\rm out}_{2})$ and  we denote this 
fact 
by $(G_{1},S^{\rm in}_{1},S^{\rm out}_{1})$ $\preceq (G_{2},S^{\rm in}_{2},S^{\rm out}_{2})$ 
if there exist a surjection $\phi: V(G_{2})\rightarrow V(G_{1})$ such that:

\begin{itemize}
\item[${\bf 1.}$] for every vertex $ v\in V(G_{1})$,  $G_{2}[\phi^{-1}(v)]$ is connected
\item[${\bf 2.}$] for every two distinct vertices $u,v\in V(G_{1})$, it holds that 
$\{v,u\}\in E(G_{1})$ if and only if the graph $G_{2}[\phi^{-1}(v)\cup 
\phi^{-1}(u)]$ is connected
\item[${\bf 3.}$] $\phi(S_{2}^{\rm in})= S_{1}^{\rm in}$
\item[${\bf 4.}$] $\phi(S_{2}^{\rm out})= S_{1}^{\rm out}$
\end{itemize}
We also write $(G_{1},S^{\rm in}_{1},S^{\rm out}_{1})\preceq_{\phi} (G_{2},S^{\rm in}_{2},
S^{\rm out}_{2})$ 
to make clear the function that certifies the contraction relation.
We say that $G_{1}$ is a {\em contraction}
of $G_{2}$ if $(G_{1},\emptyset,\emptyset)\preceq (G_{2},\emptyset,\emptyset)$
and we denote this fact by $G_{1}\preceq G_{2}$. If furthermore $G_1$ is not isomorphic to 
$G_2$ we say that 
$G_1$ is a {\em proper contraction} of $G_2$.

We define the {\em minor} relation for the two rooted graph by removing  in the second property 
the demand that if $\{u,v\}\notin E(G_1)$ then $G_{2}[\phi^{-1}(v)\cup 
\phi^{-1}(u)]$ is not connected.  We denote the minor relation by $(G_{1},S^{\rm in}_{1},
S^{\rm out}_{1})$ $\leq (G_{2},S^{\rm in}_{2},S^{\rm out}_{2})$.  Again, we say that $G_{1}$ is a 
{\em minor}
of $G_{2}$ if $(G_{1},\emptyset,\emptyset)\leq (G_{2},\emptyset,\emptyset)$
and we denote this fact by $G_{1}\leq G_{2}$.

\begin{lemma}
\label{lem:clos}
If $(G_{1},S^{\rm in}_{1},S^{\rm out}_{1})$ and $(G_{2},S^{\rm in}_{2},S^{\rm out}_{2})$ 
are  rooted graphs and  $(G_{1},S^{\rm in}_{1},$ 
$S^{\rm out}_{1})\preceq (G_{2},S^{\rm in}_{2},S^{\rm out}_{2})$,
then 
${\bf cmp}(G_{1},S^{\rm in}_{1},S^{\rm out}_{1}))\leq {\bf cmp}(G_{2},S^{\rm in}_{2},S^{\rm out}_{2}))$.
\end{lemma}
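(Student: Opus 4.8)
The plan is to take an optimal connected monotone $(E_2^{\rm in},E_2^{\rm out})$-expansion ${\cal E}$ of $\rep(G_2,S_2^{\rm in},S_2^{\rm out})$ and push it forward through the contraction map $\phi$ to produce a connected monotone $(E_1^{\rm in},E_1^{\rm out})$-expansion of $\rep(G_1,S_1^{\rm in},S_1^{\rm out})$ of no greater cost. First I would extend $\phi$ to a surjection $\hat\phi$ between the enhanced graphs, sending $u^{\rm in}_2\mapsto u^{\rm in}_1$ and $u^{\rm out}_2\mapsto u^{\rm out}_1$; conditions 3 and 4 of the contraction definition guarantee that this is still a valid contraction $\rep(G_1,S_1^{\rm in},S_1^{\rm out})\preceq_{\hat\phi}\rep(G_2,S_2^{\rm in},S_2^{\rm out})$, and that the edge extensions map onto each other. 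An edge $e=\{a,b\}$ of the enhanced $G_2$ is then \emph{contracted} if $\hat\phi(a)=\hat\phi(b)$ and \emph{survives} (mapping to an edge of the enhanced $G_1$) otherwise; for each edge $f$ of $\rep(G_1,S_1^{\rm in},S_1^{\rm out})$ pick one representative surviving edge in its preimage.

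Next I would define, for each set $A_i$ in ${\cal E}$, the set $\psi(A_i)$ to be the set of edges $f$ of the enhanced $G_1$ all of whose vertices lie in $\hat\phi(V(A_i))$, equivalently the edge set of the induced subgraph of the enhanced $G_1$ on $\hat\phi(V(A_i))$. One checks: (a) $\psi(A_1)=E_1^{\rm in}$ and $\psi(A_r)=E(\text{enh }G_1)\setminus E_1^{\rm out}$, using that $\phi$ maps $S_2^{\rm in}$ onto $S_1^{\rm in}$ and $S_2^{\rm out}$ onto $S_1^{\rm out}$ together with condition 2; (b) $\psi$ is monotone since ${\cal E}$ is and $\hat\phi$ is a fixed map; (c) each $G_1[\psi(A_i)]$ is connected, because $G_2[A_i]$ is connected and $\hat\phi$ restricted to $V(A_i)$ is a surjection onto $\hat\phi(V(A_i))$ under which, by conditions 1 and 2, connectivity is preserved. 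The sequence $\langle\psi(A_1),\dots,\psi(A_r)\rangle$ need not satisfy condition 2 of an expansion (consecutive sets may differ by more than one edge, or repeat), so I would pass to a genuine expansion ${\cal E}'$ by interpolating, between $\psi(A_i)$ and $\psi(A_{i+1})$, an arbitrary one-edge-at-a-time ordering of $\psi(A_{i+1})\setminus\psi(A_i)$, and deleting repetitions; monotonicity and connectivity of the intermediate sets hold because they are sandwiched between $\psi(A_i)$ and $\psi(A_{i+1})=\psi(A_i)\cup(\text{a connected induced piece})$, and one can order the added edges respecting a BFS from $V(\psi(A_i))$ to keep every prefix connected.

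The cost bound is the crux. The boundary term is the easy half: I claim $\partial_{\text{enh }G_1}(\psi(A_i))\subseteq\hat\phi(\partial_{\text{enh }G_2}(A_i))$, because a vertex $v=\hat\phi(x)$ on the boundary of $\psi(A_i)$ has an incident $G_1$-edge leaving $\hat\phi(V(A_i))$, which by condition 2 lifts to a $G_2$-edge from $\hat\phi^{-1}(v)$ leaving $V(A_i)$, witnessing that some vertex of $\hat\phi^{-1}(v)$ lies in $\partial_{\text{enh }G_2}(A_i)$; hence $|\partial_{\text{enh }G_1}(\psi(A_i))|\le|\partial_{\text{enh }G_2}(A_i)|$. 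The genuinely delicate point is the pendant/isolated-edge correction $q_i$ in the definition of cost: a newly added edge of ${\cal E}'$ at some interpolation step might be a pendant edge of the enhanced $G_1$ even though no corresponding step of ${\cal E}$ paid the $+1$ penalty. Here I would argue that whenever $\psi(A_{i+1})\setminus\psi(A_i)$ is nonempty and contains a pendant edge $f=\{v,w\}$ of $\text{enh }G_1$ with $w$ pendant, then already $|\partial_{\text{enh }G_1}(\psi(A_i))|\le|\partial_{\text{enh }G_2}(A_i)|-1$ unless $v$ was itself reached only now — a case analysis mirroring Cases~1–2 of the proof of Lemma~\ref{equivalence}, exploiting that in the contraction $w$'s preimage is a connected subgraph whose only external edges in $G_2$ go to $v$'s preimage, so discovering $w$ in $G_1$ corresponds in $G_2$ to absorbing a piece that never increased the $G_2$-boundary. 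Handling the isolated-edge subcase is similar but shorter: an isolated edge of $\text{enh }G_1$ pulls back to a connected component of $\text{enh }G_2$ with no $G_2$-boundary, and one routes ${\cal E}$ through it to see the $+1$ was already present. Choosing the interpolation ordering so pendant edges are taken last, exactly as in the ``Rule'' in the proof of Lemma~\ref{equivalence}, lets me conclude ${\bf cost}_{G_1}({\cal E}',i)\le{\bf cost}_{G_2}({\cal E})$ for every $i$, hence ${\bf cmp}(G_1,S_1^{\rm in},S_1^{\rm out})\le{\bf cmp}(G_2,S_2^{\rm in},S_2^{\rm out})$. I expect the pendant-edge bookkeeping to be the main obstacle, since everything else is a direct transport of structure along $\hat\phi$.
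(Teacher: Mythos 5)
Your overall architecture --- extend $\phi$ to the enhanced graphs, push the vertex sets $V(A_i)$ forward, take induced edge sets, and interpolate --- is a legitimate alternative to the paper's proof, which goes the other way: it fixes one representative $G_2$-edge $e_f$ for each edge $f$ of $G_1$ and simply takes ${\cal E}'=\langle A_1\cap E',\ldots,A_r\cap E'\rangle$ with $E'=\{e_f\mid f\in E(G_1)\}$. The paper's restriction-to-a-transversal trick buys exactly what your construction lacks: consecutive sets automatically differ by at most one edge, so no interpolation is ever needed and the cost comparison is position-by-position. Your boundary containment $\partial(\psi(A_i))\subseteq\hat\phi(\partial(A_i))$ is essentially right, though your one-line justification has a small hole (the lifted $G_2$-edge leaving $V(A_i)$ may emanate from a vertex of $\hat\phi^{-1}(v)$ that is \emph{not} in $V(A_i)$; you need to split on whether $\hat\phi^{-1}(v)\subseteq V(A_i)$ and use connectivity of the fibre in the other case). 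Also, $\psi(A_1)$ equals $E_1^{\rm in}$ only when $S_1^{\rm in}$ is independent in $G_1$; in general it also contains $E(G_1[S_1^{\rm in}])$, so condition~3 of an expansion fails and you must prepend an interpolation from $E_1^{\rm in}$. These are repairable.

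The genuine gap is in the cost of the \emph{interpolated} positions, and it is not where you locate it (the pendant correction); it is in the boundary term itself. When $A_{i+1}$ introduces a new vertex $w'$, the set $\psi(A_{i+1})\setminus\psi(A_i)$ is the star from $w=\hat\phi(w')$ to its neighbours in $\hat\phi(V(A_i))$, and for an intermediate prefix $B$ of that star one only gets $\partial(B)\subseteq\partial(\psi(A_i))\cup\{w\}$, which can have size $k+1$. A BFS ordering does not prevent this. Concretely, take $G_1=G_2=G$ a triangle $a,x,w$ with pendant edges $aa_1$ and $ww_1$, and $\phi$ the identity. The expansion $\langle\emptyset,\{ax\},\{ax,xw\},\{ax,xw,aw\},\ldots\rangle$ has cost $2$, and $\psi$ sends $\{ax\}$ to $\{ax\}$ and $\{ax,xw\}$ to the full triangle $\{ax,xw,aw\}$; both $xw$ and $aw$ are star edges at the new vertex $w$, so BFS allows adding $aw$ first, and then $B=\{ax,aw\}$ has $\partial(B)=\{a,x,w\}$, cost $3$. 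The fix is to add first the star edge $w\hat\phi(x')$, where $x'$ is the $G_2$-endpoint of the new edge that exits $\partial(A_{i+1})$, and to argue (using injectivity of $\hat\phi$ on $\partial(A_i)$, or lack thereof, plus connectivity of fibres) that $\hat\phi(x')$ then also exits the $G_1$-boundary or that there is slack of one unit; the remaining star edges are fill-ins that only shrink the boundary, as in the Rule of Lemma~\ref{equivalence}. Without this choice and this argument, the claimed inequality ${\bf cost}_{G_1}({\cal E}',i)\le{\bf cost}_{G_2}({\cal E})$ fails.
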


\begin{proof}
Suppose that ${\cal E}=\langle A_{1},\ldots,A_{r}\rangle$ is a monotone 
$(E_{2}^{\rm in },E_{2}^{\rm out})$-expansion 
of $G_{2}^{*}=\rep(G_2,S^{\rm in}_2,S^{\rm out}_2)$ with cost at most $k$.
Our target is to construct a monotone $(E_{1}^{\rm in },E_{1}^{\rm out})$-expansion 
of $G_{1}^{*}=\rep(G_1,S^{\rm in}_1,S^{\rm out}_1)$ with cost at most $k$.

Let  $\phi$ be a function where
$(G_{1},S^{\rm in}_{1},S^{\rm out}_{1})\preceq_{\phi} (G_{2},S^{\rm in}_{2},S^{\rm out}_{2})$. 
We  consider an extension $\psi$ of $\phi$ 
that additionally maps $u_{2}^{\rm in}$ to $u_{1}^{\rm in}$
and $u_{2}^{\rm out}$ to $u_{1}^{\rm out}$. Notice that the construction of $\psi$ yields the following:
$$(G_{1}^{*},S^{\rm in}_{1}\cup \{u_{1}^{\rm in}\},S^{\rm out}_{1}\cup \{u_{1}^{\rm out}\})
\preceq_{\phi} (G_{2}^{*},S^{\rm in}_{2}\cup 
\{u_{2}^{\rm in}\},S^{\rm out}_{2}\cup \{u_{2}^{\rm out}\})$$

Given an edge $f=\{x,y\}\in E(G_{1})$ we consider the set $E_f$
containing all edges of $G_{2}$ with one endpoint in $\psi^{-1}(x)$ and one 
endpoint in $\psi^{-1}(y)$. We now pick, arbitrarily,
an edge in $E_{f}$ and we denote it by $e_{f}$. We also set $E'=\{e_f\mid f\in E(G_{1})\}$. 
Then it is easy to observe that 
${\cal E}'=\langle A_{1}\cap E',\ldots,A_{r}\cap E'\rangle$ is a connected expansion of $G_{1}^*$ 
and that 
the cost of ${\cal E}'$ at step $i$ is no bigger than the cost of ${\cal E}$
at the same step, where $i\in\{1,\ldots,r-1\}$.
\end{proof}

\begin{lemma}
\label{lem:cs-clos}
If $G_{1}$ and $G_{2}$ 
are  two graphs and  $G_{1}\preceq G_{2}$,
then 
${\bf cs}(G_{1})\leq {\bf cs}(G_{2})$.
\end{lemma}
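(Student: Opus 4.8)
The plan is to reduce the statement about $\cs$ to the already-established Lemmata~\ref{equivalence} and~\ref{lem:clos}, via a reversibility observation. The first step is to observe that a connected monotone search is, in particular, a connected search, so that $\cs(G)\leq\cms(G)$ always holds; conversely, by a standard recontamination-elimination argument in the spirit of LaPaugh/Bienstock--Seymour (adapted to the connected setting, where the ``cleaned'' region is an interval of a connected subgraph), one shows that recontamination does not help, i.e. $\cs(G)=\cms(G)$. Granting this, it suffices to prove $\cms(G_1)\leq\cms(G_2)$ whenever $G_1\preceq G_2$. Translating through Lemma~\ref{equivalence} (with $S^{\rm in}=S^{\rm out}=\emptyset$), this is exactly $\cmp(G_1,\emptyset,\emptyset)\leq\cmp(G_2,\emptyset,\emptyset)$, i.e. $\cmp(G_1)\leq\cmp(G_2)$.

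So the second step is to feed $G_1\preceq G_2$ into Lemma~\ref{lem:clos}. By definition $G_1\preceq G_2$ means $(G_1,\emptyset,\emptyset)\preceq(G_2,\emptyset,\emptyset)$ as rooted graphs, so Lemma~\ref{lem:clos} applies verbatim and gives $\cmp(G_1,\emptyset,\emptyset)\leq\cmp(G_2,\emptyset,\emptyset)$, which is $\cmp(G_1)\leq\cmp(G_2)$. Chaining the equalities and inequalities yields
\[
\cs(G_1)=\cms(G_1)=\cmp(G_1)\leq\cmp(G_2)=\cms(G_2)=\cs(G_2),
\]
as desired. (If the paper wishes to state the result for $\cms$ as well, the same chain without the first and last equalities already does it.)

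The main obstacle is the monotonicity step $\cs(G)=\cms(G)$: unlike the pure interval/path-decomposition setting where ``recontamination does not help'' is classical for mixed search, here one must check that eliminating recontamination can be done while preserving connectivity of the cleaned region at every step. The cleanest route is to avoid proving this from scratch and instead work with expansions throughout: define $\cs$ in terms of (not necessarily monotone) connected expansions, mirror the proof of Lemma~\ref{equivalence} to get $\cs(G)=\mathbf{p}_{\mathrm{c}}(G)$ where $\mathbf{p}_{\mathrm{c}}$ allows non-monotone connected expansions, and then observe that any non-monotone connected expansion can be ``monotonized'' without increasing cost by the standard trick of, at each recontamination step, simply skipping the removal that caused it — this never increases $|\partial_G(\cdot)|$ and never destroys connectivity since we are only keeping more edges clean. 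That monotonization argument, one or two lines of set manipulation, is where the real work sits; everything else is bookkeeping through the already-proved lemmata. Alternatively, if an earlier lemma (not shown in this excerpt) already records $\cs=\cms$, the proof collapses entirely to the two-line chain above.
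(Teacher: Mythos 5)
Your reduction hinges on the identity $\cs(G)=\cms(G)$, i.e.\ that recontamination never helps in \emph{connected} mixed searching, and this is exactly where the argument breaks. That identity is not available in this paper, is not a routine adaptation of LaPaugh/Bienstock--Seymour, and is in fact known to fail for connected search games in general (recontamination can help once the cleaned territory must stay connected; this is precisely why the literature cited here, e.g.\ Dereniowski's ``From pathwidth to connected pathwidth,'' only proves a multiplicative relation between the two parameters, and why the paper proves $\obs_{\preceq}({\cal G}[\cms,2])=\obs_{\preceq}({\cal G}[\cs,2])$ as a separate Corollary~\ref{cor:cs-obs} using only the one-sided inequality $\cs(H)\leq \cms(H)$). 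Your proposed two-line monotonization does not repair this: skipping a removal leaves an extra searcher on the graph for all subsequent steps, so the searcher count can grow; in the expansion picture, replacing $A_i$ by $\bigcup_{j\leq i}A_j$ preserves connectivity but only gives $|\partial_G(A\cup B)|\leq |\partial_G(A)|+|\partial_G(B)|$ by submodularity, not $\leq\max$, so the cost is not controlled. With only $\cs\leq\cms$ in hand, your chain yields $\cs(G_1)\leq \cms(G_2)$, which does not bound $\cs(G_1)$ by $\cs(G_2)$.

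The paper avoids all of this by not passing through monotonicity or expansions at all: it reduces to a single edge contraction $e=\{u,v\}$ and directly simulates an arbitrary (possibly non-monotone) connected search strategy ${\cal S}$ of $G_2$ on $G_2/e$, replacing every move at $u$ or $v$ by the corresponding move at $x_{uv}$ and discarding slides along $e$ itself. This move-by-move translation preserves completeness, connectivity of the cleaned region, and the maximum number of searchers, giving $\cs(G_1)\leq\cs(G_2)$ with no appeal to Lemmata~\ref{equivalence} or~\ref{lem:clos}. If you want to salvage your approach, you would need either a correct proof that connected mixed search is monotone (a substantial and, in general, false claim) or a full non-monotone analogue of Lemma~\ref{equivalence}; the direct simulation is both simpler and actually valid.
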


\begin{proof}
First observe that if this is the case any contraction of $G_2$ can be derived by applying a finite
number of edge-contractions of some edges in $E(G_2)$. 

It suffices to prove that the Lemma hold if $G_1$ is obtained by the contraction of edge 
$e=\{u,v\}\in E(G_1)$ to vertex $x_{uv}$. Let $\cal S$ be a connected search strategy for 
$G_2$ that in any step uses at most $k$ searchers. Based on $\cal S$ we will construct a 
search strategy ${\cal S}'$ for $G_1$.  Let $i$ be an integer in $\{1,\ldots,|{\cal S}|\}$. 
We distinguish eight cases:\\

\noindent {\em Case 1}: If the $i$-th move of $\cal S$ is {\sf p}$(x)$ for some vertex 
$x\notin\{u,v\}$ then the next move of ${\cal S}'$ will be {\sf p}$(x)$.\\

\noindent {\em Case 2}: If the $i$-th move of $\cal S$ is {\sf r}$(x)$ for some vertex 
$x\notin\{u,v\}$ then the next move of ${\cal S}'$ will be {\sf r}$(x)$.\\

\noindent {\em Case 3}: If the $i$-th move of $\cal S$ is {\sf s}$(x,y)$ for some vertices 
$x,y\notin\{u,v\}$ then the next move of ${\cal S}'$ will be {\sf s}$(x,y)$.\\

\noindent {\em Case 4}: If the $i$-th move of $\cal S$ is {\sf p}$(u)$ or  {\sf p}$(v)$ then 
the next move of ${\cal S}'$ will be {\sf p}$(x_{uv})$.\\

\noindent {\em Case 5}: If the $i$-th move of $\cal S$ is {\sf r}$(u)$ or  {\sf r}$(v)$ then 
the next move of ${\cal S}'$ will be {\sf r}$(x_{uv})$.\\

\noindent {\em Case 6}: If the $i$-th move of $\cal S$ is {\sf s}$(z,u)$ or {\sf p}$(z,v)$ for 
some vertex $z$ then the next move of ${\cal S}'$ will be {\sf s}$(z,x_{uv})$.\\

\noindent {\em Case 7}: If the $i$-th move of $\cal S$ is {\sf s}$(u,z)$ or {\sf p}$(v,z)$ 
for some vertex $z$ then the next move of ${\cal S}'$ will be {\sf s}$(x_{uv},z)$.\\

\noindent {\em Case 8}: If the $i$-th move of $\cal S$ is {\sf s}$(u,v)$ or {\sf p}$(v,u)$ then 
the next move of ${\cal S}'$ will be defined according the lateral cases from the 
$(i+1)$-th move of $\cal S$.\\

Observe that ${\cal S}'$ is a complete search strategy for $G_1$. Furthermore, as 
$\cal S$ is connected, ${\cal S}'$ must also be connected. Finally, it is clear that 
${\cal S}'$ at any step uses at most $k$ searchers, thus $\cs(G_1)\leq k$.
\end{proof}

\subsection{Parameters and obstructions.} 

We denote by ${\cal G}$ the class of all 
graphs. A \textit{graph parameter} is a function $f: {\cal G}\rightarrow \mathbb{N}$. 
Given a graph parameter $f$ and an integer $k\in\mathbb{N}$ we define the \textit{graph class}
$\mathcal{G}[f,k]$, containing all the graphs $G\in {\cal G}$ where $f(G)\leq k$.

Let ${\cal H}$ be a graph class. We denote by $\obs({\cal H})$ the set of all graphs 
in ${\cal G}\setminus {\cal H}$ that are minimal with respect to the relation $\preceq$.

\subsection{Cut-vertices and blocks.}

We call the 2-connected 
components of a graph $G$ {\em blocks}. If the removal of an edge 
in a graph  increases the number of its connected components then it is called {\em bridge}.
We consider the subgraph of $G$ induced by the endpoints of a bridge 
of $G$  as one of its blocks and we call it {\em trivial block} 
of $G$. 

A {\em cut-vertex} of a graph $G$ is a vertex such that $G\setminus x$ has more connected 
components than $G$.
Given a graph $G$ a {\em cut-vertex of a block $B$} of $G$ is a cut-vertex of $G$ 
that belongs in $V(B)$.

Let $G$ bet a graph and let $x\in V(G)$. We define $${\cal C}_{G}(x)=\{(x,G[V(C)\cup
\{x\}])\mid \mbox{$C$ is a connected component of $G\setminus x$}\}.$$ 

 Let $B$ be a
block of $G$ and let $x$ be a cut-vertex of $B$. We denote by $C_{G}(x,B)$ the (unique) graph in 
${\cal C}_{G}(x)$  that contains $B$ as a subgraph and by $\overline{\cal C}_{G}(x,B)$ the graphs in 
${\cal C}_{G}(x)$  that do not contain $B$.

\subsection{Outerplananr graphs.}

We call a graph $G$ \textit{outerplanar} if it can be embedded in the plane such that all
its vertices are incident to its infinite face (also called {\em outer face}). This  embedding, 
when exists, is unique
up to homeomorphism and, from now on, each outerplanar graph is accompanied with  such an 
embedding.  
An edge $e\in E(G)$ is called \textit{outer edge of $G$},
if it is incident to the outer face of $G$, otherwise is called a \textit{chord of $G$}.

A face $F$ of an outer planar graph that is different than the outer face, is called \textit{haploid} 
if and only if at most  one edge
incident to $F$ is a chord, otherwise $F$ is a \textit{inner} face. A vertex $u\in V(G)$
is \textit{haploid} if  it is incident to an haploid face and  \textit{inner} if it is incident
to an inner face (notice that some vertices can be both inner and haploid). 
A vertex of $G$ that is not inner or haploid is called {\em outer}.
We call a chord {\em haploid} if it is incident to an haploid face. Non-haploid 
chords are called {\em internal chords}. 

\begin{figure}
\begin{center}
 \scalebox{0.8}{\includegraphics{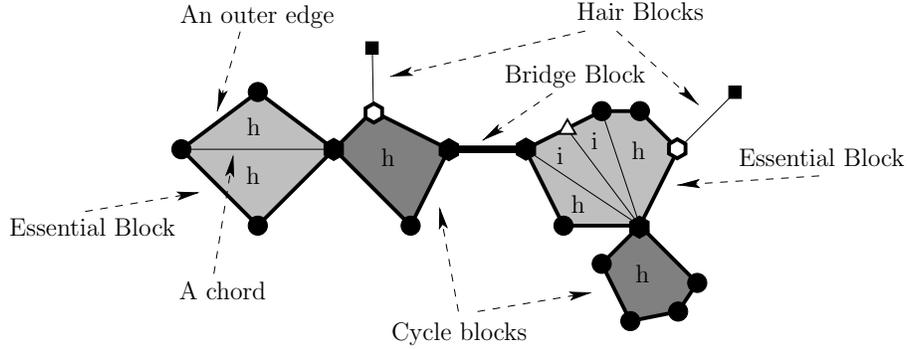}}
 \end{center}
\caption{A outerplanar graph and its blocks. The cut-vertices are hexagonal and the outer vertices 
are squares. 
Inner and haploid faces are denoted by ``i" and ``h" respectively.
There are, in total, four inner vertices (all belonging to the  essential block on the right) and, 
among them only the
triangular one is not  an haploid vertex. The white hexagonal vertices are the light cut-vertices 
while the rest of the 
hexagonal vertices are the heavy ones.}
 \label{FanRootObs}
\end{figure}

\begin{observation}
A block of a connected outerplanar graph with more than one edge can be one of the following.
\begin{itemize}
\item a {\em hair block}: it is a trivial block containing exactly one vertex of degree 1 in $G$.
\item a {\em bridge block}: it is a trivial block that is not a hair-block.
\item a {\em cycle block}: if it is a chordless non-trivial block, or
\item an {\em essential block}: if it is a non-trivial block with at least one chord.
\end{itemize}
\end{observation}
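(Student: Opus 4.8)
The plan is to run a short case analysis on the block $B$, using that $G$ is connected, outerplanar and has at least two edges. Since $G$ has an edge, every block of $G$ is, in the terminology fixed above, either a \emph{trivial} block $G[\{u,v\}]$ coming from a bridge $\{u,v\}$ of $G$ (two vertices, one edge), or a \emph{non-trivial} block, i.e.\ a maximal $2$-connected subgraph on at least three vertices. I would treat these two cases separately and check that each of them produces exactly one of the four listed types.

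\textbf{Trivial blocks.} Let $B=G[\{u,v\}]$ with $\{u,v\}$ a bridge. Both $u$ and $v$ are endpoints of $\{u,v\}$, hence have degree at least $1$ in $G$. If both had degree exactly $1$, then $\{u,v\}$ would be an isolated edge and therefore a whole connected component of $G$; as $G$ is connected this would force $G=B$, contradicting $|E(G)|>1$. Hence either exactly one of $u,v$ has degree $1$ in $G$ — and then $B$ contains exactly one vertex of degree $1$ in $G$, a hair block — or neither does, and then $B$ is a trivial block that is not a hair block, a bridge block. (The hypothesis $|E(G)|>1$ is used precisely to kill the degenerate case $G=K_{2}$, where the single block has two pendant endpoints and fits none of the four types cleanly.)

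\textbf{Non-trivial blocks.} Let $B$ be $2$-connected with $|V(B)|\ge 3$. Being a subgraph of the outerplanar graph $G$, $B$ is itself outerplanar, and a $2$-connected outerplanar graph on at least three vertices has a unique Hamiltonian cycle $C_{B}$, which bounds the outer face in its (up to homeomorphism unique) outerplanar embedding; every edge of $B$ off $C_{B}$ is a chord of $B$. Thus either $E(B)=E(C_{B})$, so $B$ is a chordless non-trivial block, namely a cycle block, or $E(B)\setminus E(C_{B})\ne\emptyset$, so $B$ has at least one chord and is an essential block. To be careful about the word ``chord'' one should also note that this intrinsic dichotomy coincides with the one coming from the fixed outerplanar embedding of $G$: from the block-cut-tree structure, the outer boundary of $G$ is the concatenation, along the cut-vertices, of the Hamiltonian cycles of the non-trivial blocks and of the bridges (each bridge traversed twice), so an edge of $B$ is incident to the outer face of $G$ iff it lies on $C_{B}$; hence the chords of $B$ are exactly the edges of $B$ that are chords of $G$.

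The two cases above are exhaustive, and the four outcomes are mutually exclusive (trivial vs.\ non-trivial, and within each, ``exactly one pendant endpoint or not'' resp.\ ``contains a chord or not''), so this completes the argument. The only step that is not pure bookkeeping is the embedding-consistency remark in the non-trivial case, i.e.\ that the Hamiltonian-cycle edges of a block are precisely the edges of that block lying on the outer face of $G$; I expect this to be the main (though still routine) obstacle, and it follows from the uniqueness of the outerplanar embedding together with the tree-like attachment of blocks at cut-vertices.
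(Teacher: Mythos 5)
The paper states this classification as an \emph{Observation} and offers no proof at all, so there is nothing to compare your argument against; it is treated as self-evident from the definitions of trivial block, pendant vertex and chord. Your case analysis is correct and complete: the trivial/non-trivial dichotomy, the use of $|E(G)|>1$ together with connectivity to exclude the two-pendant-endpoint bridge, and the unique-Hamiltonian-cycle argument for the chordless/chorded split all check out. Your extra care in verifying that the paper's embedding-based notion of chord (an edge not incident to the outer face of $G$) agrees with the intrinsic notion (an edge of $B$ off its Hamiltonian cycle) addresses the only genuinely non-trivial point, and your block--cut-tree justification of it is sound.
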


Let $G$ be a connected outerplanar graph with more than one edges.
Given a  cut-vertex $c$ of $G$, 
we say that $c$ is {\em light} if it is the (unique) cut-vertex of exactly one hair block.
If a cut-vertex of $G$ is not light then it is {\em heavy}. 

It is known that the
\textit{class of outerplanar graphs} is closed under the relations $\leq,\preceq$ and
that a graph is outerplanar if and only if $K_4\nleq G$ and $K_{2,3}\nleq G$.

\begin{figure}
\begin{center}
 \scalebox{0.8}{\includegraphics{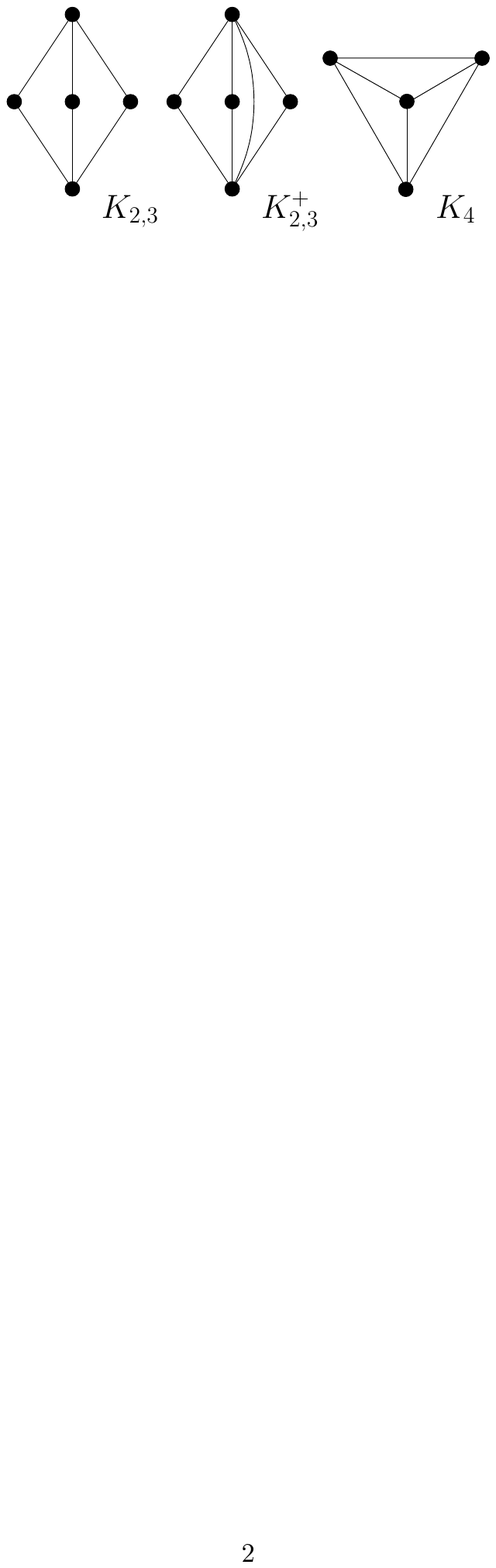}}
 \end{center}
\caption{The set ${\cal O}_{1}$.}
\label{FanRsootObs}
\end{figure}

Let $K_{2,3}^{+}$ be the graph obtained by $K_{2,3}$ after connecting the two vertices of
degree 3 (Figure~\ref{FanRsootObs}).

\begin{lemma}
\label{lema:outerplanar} 
If ${\cal H}$ is the class of all outerplanar graphs, then $\obs({\cal 
H})={\cal O}_{1}$.
\end{lemma}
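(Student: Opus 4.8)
The plan is to prove the two inclusions $\mathcal{O}_1\subseteq\obs(\mathcal{H})$ and $\obs(\mathcal{H})\subseteq\mathcal{O}_1$ separately, where $\mathcal{O}_1$ consists of the three graphs $K_4$, $K_{2,3}$ and $K_{2,3}^{+}$. For the first inclusion, since $\mathcal{O}_1$ is finite it suffices to verify by inspection that each of these three graphs is non-outerplanar while all of its proper contractions are outerplanar, and that no one of them is a contraction of another. Non-outerplanarity of $K_4$ and of $K_{2,3}$ is the known excluded-minor fact, and $K_{2,3}^{+}$ contains $K_{2,3}$ as a subgraph; for the proper contractions one only needs to treat a single edge contraction in each graph up to symmetry (an edge of $K_4$ collapses it to a triangle; a spoke of $K_{2,3}$ or of $K_{2,3}^{+}$ yields $K_4$ minus an edge; the extra edge of $K_{2,3}^{+}$ yields $K_{1,3}$), and all of these are outerplanar, hence so are all further contractions. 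Finally $K_4$ has fewer vertices than the other two and $K_{2,3}^{+}$ is not isomorphic to $K_{2,3}$, so the three graphs form a $\preceq$-antichain; therefore each lies in $\obs(\mathcal{H})$.

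For the reverse inclusion, let $G\in\obs(\mathcal{H})$, so that $G$ is non-outerplanar but every proper contraction of $G$ is outerplanar, and we may assume $G$ is connected. I would first record some reductions. $G$ has no pendant vertex $u$ with neighbour $v$, since then $G/\{u,v\}\cong G\setminus u$ is again non-outerplanar and is a proper contraction. $G$ has no cut-vertex $x$: writing the components of $G\setminus x$ as $C_1,\dots,C_k$, the fact that outerplanarity is closed under $1$-sums forces some $G[V(C_i)\cup\{x\}]$ to be non-outerplanar, and contracting each $G[V(C_j)\cup\{x\}]$ with $j\neq i$ onto $x$ exhibits it as a contraction of $G$, which is proper unless $k=1$. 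Hence $G$ is $2$-connected with minimum degree at least $2$.

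Since $G$ is non-outerplanar, by the quoted excluded-minor characterisation $K_4\le G$ or $K_{2,3}\le G$; fix such a minor model and consider its branch sets. The key step is that no branch set contains an edge of $G$: if $e$ were such an edge, contracting $e$ keeps every branch set connected and keeps every required adjacency between branch sets, so $G/e$ still contains $K_4$ (resp. $K_{2,3}$) as a minor and is non-outerplanar, contradicting minimality since $G/e$ is a proper contraction. Hence every branch set is a single vertex. Similarly, there is no vertex $u$ of $G$ outside the branch sets: choosing a neighbour $w$ of $u$ and contracting $\{u,w\}$ again preserves the minor (absorbing $u$ into the branch set of $w$ if $w$ is a branch vertex), giving a non-outerplanar proper contraction. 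Therefore $V(G)$ is exactly the set of branch vertices: either four mutually adjacent vertices, forcing $G=K_4$, or five vertices $a,b,x,y,z$ carrying at least the six edges $ax,ay,az,bx,by,bz$ of a $K_{2,3}$. In the latter case the only edge of $G$ outside these six is possibly $\{a,b\}$, because an edge among $\{x,y,z\}$, say $\{x,y\}$, would produce a $K_4$ minor with branch sets $\{a\},\{x\},\{y\},\{b,z\}$, contradicting $K_4\not\le G$ (which holds here, since $K_4\le G$ has already been handled). Thus $G$ is $K_{2,3}$ or $K_{2,3}^{+}$, and in all cases $G\in\mathcal{O}_1$.

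I expect the main obstacle to be the bookkeeping in the two kinds of reduction argument — in particular making the ``no interior edge / no extra vertex'' claims fully rigorous, i.e.\ checking in each subcase that the chosen edge contraction genuinely preserves the relevant minor, and handling the cut-vertex reduction, which relies on closure of outerplanarity under $1$-sums. Once these structural facts are in place, the concluding case analysis on four or five vertices is short and purely combinatorial.
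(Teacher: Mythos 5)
Your proof is correct, and it rests on the same key ingredient as the paper's: the characterisation of outerplanarity by the excluded minors $K_4$ and $K_{2,3}$, followed by the same case analysis on which extra adjacencies a $K_{2,3}$-model can carry (an extra edge between the two degree-$3$ parts yields $K_{2,3}^{+}$, any other extra edge yields a $K_4$). The difference is in how the inclusion $\obs(\mathcal{H})\subseteq\mathcal{O}_1$ is organised. The paper proves the stronger statement that \emph{every} non-outerplanar graph has a contraction in $\mathcal{O}_1$, by taking an arbitrary minor model and upgrading it directly to a contraction model via that case analysis; minimality is never invoked. You instead fix a $\preceq$-minimal $G$ and let minimality rigidify the model --- no edge inside a branch set, no vertex outside the model --- so that $G$ literally equals the model plus possible extra edges, and then enumerate. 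Your route makes the minimality hypothesis do the work that the paper's model surgery does; the price is some machinery that turns out to be superfluous (the pendant-vertex and cut-vertex reductions establishing $2$-connectivity are never used in the rest of your argument and can be deleted). One point worth recording, which both treatments leave implicit except for your one-line remark: since the paper's contraction relation preserves the number of connected components, the identity $\obs(\mathcal{H})=\mathcal{O}_1$ is only true when $\obs$ is taken over connected graphs (otherwise $K_4$ together with isolated vertices would also be $\preceq$-minimal non-outerplanar); your assumption that $G$ is connected is the right convention and matches the paper's standing hypotheses elsewhere.
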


\begin{proof}
Observe that the graphs in ${\cal O}_{1}$ 
cannot be embedded in the plane in such  a way that all of its vertices are incident to a single 
face and therefore neither the graphs in ${\cal O}_{1}$, neither the graphs that contain as a 
contraction 
a graph in ${\cal O}_{1}$, can be outerplannar. 

To complete the proof, one must show that every non-outerplannar graph can be contracted to 
a graph in 
${\cal O}_{1}$. Let $G$ be non-outerplannar, then $K_4\leq G$ or $K_{2,3}\leq G$. Clearly, 
as $K_{4}$ is a clique, 
$K_{4}\leq G$ implies that $K_{4}\preceq G$. 
Suppose now that $K_{2,3}\leq G$. Let $V_{x},V_{y}, V_{1}, V_{2}, V_{3}$ be the 
vertex sets of the connected subgraphs of $G$ that are contracted towards creating the 
vertices of $K_{2,3}$ ($V_{x}$ and $V_{y}$ are contracted to vertices of degree 3). If there is no 
edge in 
$G$ between two vertices in $V_{a}$ and $V_{b}$ for 
some $(a,b)\in \{(x,y),(1,2),(2,3),(1,3)\}$ then $K_{2,3}\preceq G$. If the only such edge 
is between $V_{x}$ and $V_{y}$ then $K_{2,3}^{+}\preceq G$ and in any other case, 
$K_{4}\preceq G$. 
\end{proof}

\section{Obstructions for Graphs with $\cms$ at most 2} 
\label{sec:2}

In this section we give the obstruction set for graphs with connected monotone mixed search 
number at most 2 and we prove its correctness.

\subsection{The obstruction set for $k=2$}

\hskip0.6cm Let ${\cal D}^1={\cal O}_{1}\cup \cdots \cup {\cal O}_{12}^{}$ where ${\cal O}_{1}$
is depicted in Figure~\ref{FanRsootObs}, ${\cal O}_{2},\ldots,{\cal O}_{9}$ are depicted in 
Figure~\ref{FanRossotsObs}
and ${\cal O}_{10}$ and ${\cal O}_{11}$ and ${\cal O}_{12}$ are constructed as follows.

\begin{description}
\item[${\cal O}_{10}:$]  contains every graph that can be constructed by taking three 
disjoint copies of some graphs in Figure~\ref{FanRootObs}
and then identify the vertices denoted by $v$ in each of them to a single vertex. 
There are, in total, 35 graphs generated in this way.
\item[${\cal O}_{11}:$]  contains every graph that can be constructed by taking two 
disjoint copies of some graphs in Figure~\ref{direction}
and then identify the vertices denoted by $v$ in each of them to a single vertex.   
There are, in total, 78 graphs generated in this way.
\item[${\cal O}_{12}:$]   contains every graph that can be constructed by taking 
two disjoint copies of some graphs in Figure~\ref{if9ton}
and then identify the vertices denoted by $v$ in each of them to a single vertex.    
There are, in total, 21 graphs generated in this way.
\end{description}

Observe that, ${\cal D}^{1}$ contains 177 graphs.

\subsection{Proof strategy}

\begin{lemma}
\label{sub_dir}
${\cal D}^{1}\subseteq \obs(\mathcal{G}[\cmp,2])$.
\end{lemma}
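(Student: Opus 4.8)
The plan is to show that $\mathcal{D}^1 \subseteq \obs(\mathcal{G}[\cmp,2])$ by verifying, for each graph $H$ in the collection, the two conditions defining an obstruction: first, that $\cmp(H) > 2$, and second, that $H$ is $\preceq$-minimal with this property, i.e. every proper contraction $H'$ of $H$ satisfies $\cmp(H') \leq 2$. Since $\cmp$ is closed under contractions by Lemma~\ref{lem:clos}, the second condition reduces to checking only the graphs obtained from $H$ by contracting a single edge (every proper contraction factors through such a one-edge contraction, and monotonicity of $\cmp$ propagates the bound). So the real work splits into a lower-bound part and an upper-bound part, carried out family by family over $\mathcal{O}_1,\ldots,\mathcal{O}_{12}$.

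First I would handle $\mathcal{O}_1$ using Lemma~\ref{lema:outerplanar}: the graphs in $\mathcal{O}_1$ are exactly the contraction obstructions for outerplanarity, and since any graph with $\cmp \leq 2$ must be outerplanar (a fact to be invoked or quickly argued: $K_4$ and $K_{2,3}$ both need at least $3$ searchers, and $\cmp$ is contraction-closed), each member of $\mathcal{O}_1$ has $\cmp > 2$ while all its one-edge contractions are outerplanar graphs small enough to be searched with $2$ searchers. For the remaining families $\mathcal{O}_2,\ldots,\mathcal{O}_9$, I would compute $\cmp$ directly on each listed graph via its definition as the minimum cost of a connected monotone $(\emptyset,\emptyset)$-expansion; the lower bound $\cmp(H) \geq 3$ is shown by a case analysis on how an expansion can grow (tracking $|\partial_G(A_i)|$ and the pendant-edge penalty $q_i$), exploiting that these graphs have a vertex or small separator configuration forcing the boundary to reach size $3$ regardless of the order in which edges are absorbed — this is where the "sense of direction" phenomenon enters, since for the directional obstructions one must argue that no choice of where to start the expansion avoids cost $3$. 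The upper bound for each one-edge contraction is witnessed by an explicit low-cost expansion, which for these small graphs is routine. For the composite families $\mathcal{O}_{10}, \mathcal{O}_{11}, \mathcal{O}_{12}$ — formed by identifying copies of "rooted gadgets" at a common vertex $v$ — I would use the gluing machinery: Lemma~\ref{glue} gives $\cmp(\mathbf{glue}(\ldots)) \leq \max_i \cmp(\mathbf{G}_i)$, and conversely a connected monotone expansion of the glued graph restricts, after the standard surgery, to expansions of each rooted piece, so $\cmp$ of the identification is governed by the rooted parameters $\cmp(G_i, \{v\}, \cdot)$ of the pieces; the point is that each individual gadget in Figures~\ref{FanRootObs}, \ref{direction}, \ref{if9ton} has rooted parameter exactly $2$ but forces a specific behaviour at $v$ (either it must be cleaned "towards $v$" or "away from $v$"), so combining three (resp. two) incompatible pieces forces the total to $3$, while deleting or contracting any edge destroys the incompatibility of at least one piece and drops the value to $2$.

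The main obstacle I expect is the lower-bound argument for the directional families $\mathcal{O}_{11}$ and $\mathcal{O}_{12}$: one must precisely characterize, for each rooted gadget, the set of "feasible directions" at the root vertex $v$ (formalized through the rooted parameter $\cmp(G_i, S^{\mathrm{in}}, S^{\mathrm{out}})$ with $S^{\mathrm{in}}$ or $S^{\mathrm{out}}$ equal to $\{v\}$), and then show that the chosen pairs of gadgets have disjoint feasible-direction sets so that no single connected monotone expansion of the identified graph can respect both — all while keeping the boundary below $3$. This requires a careful structural analysis of how a cost-$2$ expansion can behave near a cut-vertex, presumably relying on Lemma~\ref{emptymake} to strip roots and on the observation that at a cut-vertex of degree $\geq 3$ the expansion is essentially forced to process one branch at a time. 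The minimality direction, by contrast, should be comparatively mechanical once the gadget catalogue is fixed: contracting an edge inside a gadget either merges it into a strictly "easier" gadget already known to have rooted parameter $\leq 2$ with flexible direction, or collapses the cut-vertex structure so that Lemma~\ref{glue} applies with all summands of value $\leq 2$.
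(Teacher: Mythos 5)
Your proposal is correct and follows essentially the same route as the paper: both invoke Lemma~\ref{lem:clos} to reduce membership in $\obs(\mathcal{G}[\cmp,2])$ to the two finite checks that $\cmp(G)\geq 3$ and that $\cmp(G/e)\leq 2$ for every edge $e$, over the finitely many graphs of ${\cal D}^1$. The paper then simply declares this finite verification to be done ``by inspection,'' whereas you sketch how the inspection would be organized (outerplanarity for ${\cal O}_1$, direct expansions for ${\cal O}_2,\ldots,{\cal O}_9$, and the gluing machinery of Lemma~\ref{glue} for the composite families), which is more detail than the paper supplies but not a different argument.
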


\begin{proof}
From Lemma~\ref{lem:clos},
it is enough to check that for every $G\in{\cal D}^1$, the following two conditions 
are satisfied ({\bf i}) $\cmp({G})\geq 3$ and ({\bf ii}) for every edge $e$ of $G$ it holds that
$\cmp(G\slash e)\leq 2$. One can verify that this is correct by inspection,
as 
this  concerns only a finite amount of graphs and, for each of them, there exists 
a finite number of edges to contract.
\end{proof}

\begin{lemma}
\label{sup_dir}
${\cal D}^{1}\supseteq \obs(\mathcal{G}[\cmp,2])$. 
\end{lemma}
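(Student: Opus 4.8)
The plan is to prove the reverse inclusion by a structural analysis: assume $G$ is a graph with $\cmp(G) \geq 3$, and show that $G$ contains some member of ${\cal D}^1$ as a contraction. Equivalently, since every member of ${\cal D}^1$ already has $\cmp \geq 3$ (Lemma~\ref{sub_dir}), it suffices to show that every \emph{minimal} such $G$ — i.e.\ every $G \in \obs(\mathcal{G}[\cmp,2])$ — is isomorphic to a member of ${\cal D}^1$. First I would record the easy reductions: by Lemma~\ref{lema:outerplanar}, if $G$ is not outerplanar then $G$ contains a member of ${\cal O}_1 \subseteq {\cal D}^1$ as a contraction, so we may assume $G$ is a connected outerplanar graph. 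Next, using the block-cut-tree decomposition together with Lemma~\ref{glue} (which shows $\cmp$ of a glued graph is the max of the pieces) and Lemma~\ref{emptymake}, I would argue that for a minimal obstruction the structure around cut-vertices is tightly controlled: roughly, a minimal $G$ with $\cmp(G)\geq 3$ either lives inside a single block with a bounded number of small ``pendant'' attachments, or consists of a few pieces glued at a single vertex $v$, each piece forcing a ``direction'' in any search strategy. This is exactly why ${\cal O}_{10}, {\cal O}_{11}, {\cal O}_{12}$ are built by identifying two or three copies of small rooted gadgets at a vertex $v$.

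The core of the argument is a classification of the rooted gadgets. The key quantity is $\cmp(H, \{v\}, \{v\})$ (or with $S^{\rm in},S^{\rm out}$ being $\{v\}$ or $\emptyset$) for the ``branches'' $H$ hanging off a cut-vertex $v$; when several branches each have rooted search cost $\geq 2$ in an incompatible way, gluing them forces $\cmp(G)\geq 3$ via Lemma~\ref{glue}'s converse direction (which needs to be established — that gluing incompatible pieces \emph{raises} the cost). I would: (1) enumerate all outerplanar rooted graphs $(H,S^{\rm in},S^{\rm out})$ that are $\preceq$-minimal with $\cmp(H,S^{\rm in},S^{\rm out}) \geq 2$; these are precisely the gadgets drawn in Figures~\ref{FanRootObs}, \ref{direction}, \ref{if9ton} (this is presumably the content of lemmata referenced later in the paper that bound the structure of graphs with $\cmp \leq 2$ and of rooted graphs with $\cmp \leq 1$); (2) show that a minimal unrooted $G$ with $\cmp(G)\geq 3$ must decompose, at some cut-vertex, into the right number of such minimal rooted obstructions — three copies for the ``undirected'' gadgets of Figure~\ref{FanRootObs} (giving ${\cal O}_{10}$), two copies for the ``directed'' gadgets of Figures~\ref{direction} and~\ref{if9ton} (giving ${\cal O}_{11}$ and ${\cal O}_{12}$), plus the finitely many ``one-block'' obstructions ${\cal O}_2,\dots,{\cal O}_9$ which are handled by direct inspection; (3) verify minimality forces each branch to be exactly a minimal rooted obstruction (any proper contraction of a branch drops its rooted cost below $2$, hence by Lemma~\ref{glue} drops $\cmp(G)$ to $2$).

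The step I expect to be the main obstacle is step (2): proving that \emph{no other configuration} can have $\cmp \geq 3$ — in particular, ruling out obstructions that are not supported on a single cut-vertex, and handling the interaction between the ``sense of direction'' forced by one branch and that forced by another (a graph is an obstruction only if the directional constraints of its branches are mutually unsatisfiable). This requires a careful case analysis of how an optimal connected monotone expansion of $G$ can enter and leave each branch through the shared cut-vertex, showing that whenever $\cmp(G)\geq 3$ there is a single vertex $v$ and a small collection of branches whose rooted expansions cannot be scheduled with total cost $2$, and then that shrinking the branches and collapsing everything else yields exactly a graph in ${\cal O}_{10}\cup{\cal O}_{11}\cup{\cal O}_{12}$. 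The companion direction of Lemma~\ref{glue} — that gluing genuinely directionally-incompatible pieces strictly exceeds the max — together with Lemma~\ref{equivalence} translating between expansions and search strategies, will be the workhorses here. I would organize the proof as a sequence of lemmata: first bounding the block structure, then classifying rooted obstructions, then assembling, and finally checking minimality by the contraction argument.
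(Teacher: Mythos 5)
Your outline follows the same road map as the paper: reduce to connected outerplanar graphs via Lemma~\ref{lema:outerplanar}, control the block/cut-vertex structure, classify the rooted gadgets hanging off cut-vertices (the ``fans'' of Figure~\ref{FanRootObs} and the directional gadgets of Figures~\ref{direction} and~\ref{if9ton}), and derive the obstruction from an incompatibility of forced search directions. Three points where your plan, taken literally, diverges from what is actually needed. First, you lean on a ``converse direction of Lemma~\ref{glue}'' --- that gluing directionally incompatible pieces \emph{strictly raises} the cost --- and call it a workhorse that ``needs to be established.'' The paper never proves or uses such a lower bound for glued graphs; every lower bound $\cmp(\cdot)\geq 3$ in the argument is witnessed by an explicit contraction to one of the $177$ graphs of ${\cal D}^{1}$, whose costs are verified by inspection in Lemma~\ref{sub_dir}. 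The final contradiction (Lemma~\ref{tll0o}) runs the other way: a hypothetical obstruction $G\notin{\cal D}^{1}$ avoids all the directional gadgets in a consistent orientation, so the \emph{upper-bound} direction of Lemma~\ref{glue} applied along the spine gives $\cmp(G)\leq 2$, contradicting $G$ being an obstruction. This inversion is what lets the proof sidestep the genuinely hard lower-bound gluing statement. Second, your step (1) asks for an enumeration of \emph{all} $\preceq$-minimal rooted outerplanar graphs with rooted cost exceeding the threshold (and note the threshold should be $\geq 3$, not $\geq 2$); the paper proves something weaker and more targeted: the equivalences ``avoids ${\cal L}$ (resp.\ ${\cal B}$, ${\cal C}$) iff $\cmp\leq 2$'' (Lemmas~\ref{cnrptal}, \ref{ro4hk}, \ref{moreli}) are established only for the extended blocks of a graph already known to lie in ${\cal Q}$, i.e.\ already satisfying the strong structural constraints of Lemmas~\ref{basic_structure} and~\ref{lem:atmost2nonfuns}. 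A standalone enumeration is neither proved nor required. Third, the two incompatible directional gadgets need not share a cut-vertex in $G$: they may sit at opposite ends of the spine of central and extremal blocks, and only after contracting everything between them (Lemma~\ref{tplerr}) does one obtain a member of ${\cal O}_{11}$ or ${\cal O}_{12}$; so the decomposition is along a path of blocks, not around a single vertex. None of these invalidates your strategy, but the first point in particular is where a literal execution of your plan would stall, and the paper's contrapositive organization is the device that removes the obstacle.
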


The rest of this section is devoted to the proof of  Lemma~\ref{sup_dir}.
For this, our strategy is to consider the set 
$${\cal Q}=\obs(\mathcal{G}[\cmp,2])\setminus {\cal D}^{1}$$
and prove that  ${\cal Q}=\emptyset$  (Lemma~\ref{tll0o}). For this, we need a series of 
structural results whose
proofs use the following three fundamental properties of the set ${\cal Q}$.

\begin{lemma}
\label{goinaldirs}
Let $G\in {\cal Q}$. Then the following hold.
\begin{itemize}
\item[i.]  $\cmp(G)\geq  3$.
\item[ii.] If $H$ is a  proper contraction of $G$, then $\cmp(G)\leq 2$.
\item[iii.] $G$ does not contain any of the graphs in    ${\cal D}^{1}$ as a contraction.
\end{itemize}
\end{lemma}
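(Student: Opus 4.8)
The plan is to derive all three items directly from the definition of $\mathcal{Q}$ together with Lemma~\ref{sub_dir}; no structural analysis of the graphs in $\mathcal{D}^1$ is required. Recall that $\mathcal{Q}=\obs(\mathcal{G}[\cmp,2])\setminus\mathcal{D}^1$, so every $G\in\mathcal{Q}$ belongs to $\obs(\mathcal{G}[\cmp,2])$, hence $G\notin\mathcal{G}[\cmp,2]$ and $G$ is $\preceq$-minimal with this property. Since $\cmp$ is integer-valued, $\cmp(G)>2$ gives $\cmp(G)\geq 3$, which is item (i). For item (ii), $\preceq$-minimality of $G$ means exactly that no proper contraction of $G$ lies outside $\mathcal{G}[\cmp,2]$; thus if $H$ is a proper contraction of $G$, then $H\in\mathcal{G}[\cmp,2]$, i.e.\ $\cmp(H)\leq 2$ (the bounded quantity is the contraction $H$).

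For item (iii) I would argue by contradiction. Suppose some $D\in\mathcal{D}^1$ is a contraction of $G$, that is, $D\preceq G$. There are two cases. If $D$ is isomorphic to $G$, then $G\in\mathcal{D}^1$, contradicting $G\in\mathcal{Q}=\obs(\mathcal{G}[\cmp,2])\setminus\mathcal{D}^1$. Otherwise $D$ is a proper contraction of $G$, so item (ii) gives $\cmp(D)\leq 2$. But Lemma~\ref{sub_dir} asserts $\mathcal{D}^1\subseteq\obs(\mathcal{G}[\cmp,2])$, whence $D\notin\mathcal{G}[\cmp,2]$ and $\cmp(D)\geq 3$, a contradiction. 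This finishes item (iii).

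The only point needing a little care is the case split in item (iii): ``$G$ contains a graph as a contraction'' includes the possibility that the two graphs are isomorphic, and it is precisely in that degenerate subcase that one must invoke membership in $\mathcal{Q}$ rather than $\preceq$-minimality. Everything else is a direct unwinding of the definition of the obstruction set, with Lemma~\ref{sub_dir} as the sole external ingredient. I therefore do not expect any genuine obstacle; the lemma simply packages the three properties of $\mathcal{Q}$ on which the subsequent structural arguments will rely.
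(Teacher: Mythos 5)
Your proof is correct and follows essentially the same route as the paper: items (i) and (ii) are immediate from membership in $\obs(\mathcal{G}[\cmp,2])$, and item (iii) is the same contradiction argument combining Lemma~\ref{sub_dir} with the observation that ${\cal Q}$ excludes ${\cal D}^1$, so any $D\in{\cal D}^1$ contracting to $G$ must be a \emph{proper} contraction. Your parenthetical note that the bound in item (ii) should read $\cmp(H)\leq 2$ rather than $\cmp(G)\leq 2$ correctly identifies a typo in the statement.
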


\begin{proof}
Properties i. and ii. hold because $G\in \obs(\mathcal{G}[\cmp,2])$.
For property iii. suppose, to the contrary, that $G$ contains some graph in $H\in {\cal D}^{1}$ 
as a contraction.
From Lemma~\ref{sub_dir}, $H\in \obs(\mathcal{G}[\cmp,2])$. Clearly, $H$ is different than $G$
as ${\cal Q}$ does not contain members of ${\cal D}^{1}$. Therefore, $H$ is a proper contraction 
of $G$ and, from property ii., 
$\cmp(H)\leq 2$. This contradicts to the fact that $H\in \obs(\mathcal{G}[\cmp,2])$ and thus 
$\cmp(H)\geq 3$.
\end{proof}

\begin{figure}[h]
\begin{center}
 \scalebox{.6}{\includegraphics{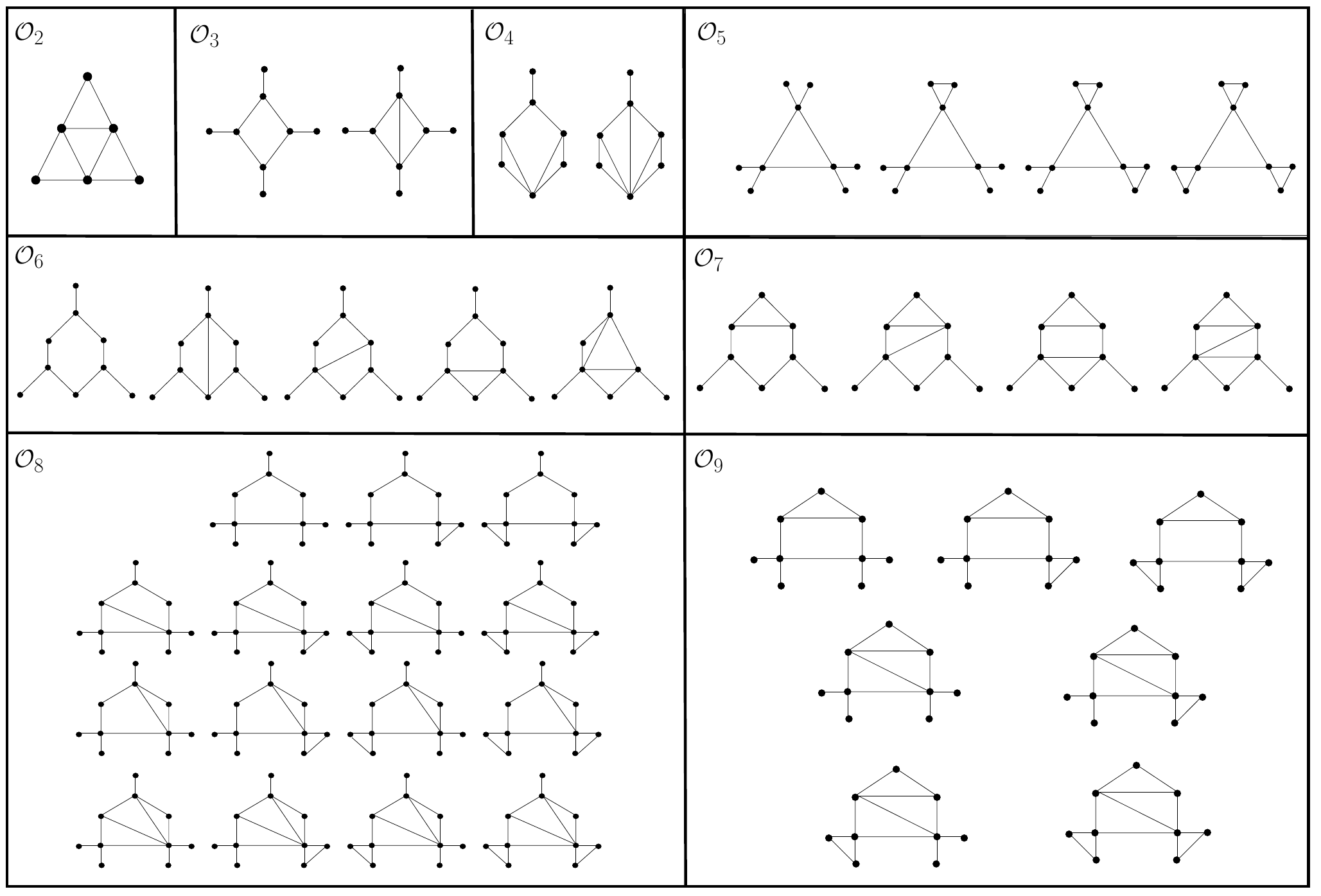}}
 \end{center}
\caption{The sets of graphs in ${\cal D}_1$.}
\label{FanRossotsObs}
\end{figure}

\subsection{Basic structural properties}

\begin{lemma}
\label{basic_structure} 
Let $G\in {\cal Q}$. The following hold:
\begin{itemize}
\item[1.] $G$ is outerplanar. 
\item[2.] Every light cut-vertex of $G$ has degree at least 3. 
\item[3.] Every essential block $B$ of $G$, has exactly two haploid faces. 
\item[4.] Every block of $G$, has at most 3 cut-vertices
\item[5.] Every  cut-vertex of a non-trivial block of $G$ is an haploid vertex. 
\item[6.] Every block of $G$ contains at most 2 heavy cut-vertices. 
\item[7.] If a block of $G$ has 3 cut-vertices, then there are two, say $x$ and $y$,  of these vertices
that are not both heavy and  are connected by an haploid edge. 
\item[8.] If an essential block of $G$ with haploid faces $F_{1}$ and $F_{2}$ has two heavy cut-vertices, then 
one can choose one, say $c_{1}$, of these two heavy cut-vertices so that it is incident to $F_{1}$ and 
one say $c_{2}$ that is incident to $F_{2}$. 
Moreover, this assignment can be done in such a way that if there is a third light cut-vertex $c_{3}$, adjacent to one, say $c_{1}$, of $c_{1},c_{2}$, then
$c_{3}$ is incident to $F_{1}$ as well.
\end{itemize}
\end{lemma}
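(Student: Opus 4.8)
The plan is to prove the eight items essentially in order, each by the same ``obstruction-forcing'' mechanism: if $G\in{\cal Q}$ violates one of the conclusions, then $G$ either has $\cmp(G)\le 2$ (contradicting Lemma~\ref{goinaldirs}(i)) or, more often, $G$ contains one of the catalogued graphs in ${\cal D}^{1}$ as a contraction (contradicting Lemma~\ref{goinaldirs}(iii)). Since ${\cal D}^{1}$ was built precisely by gluing together three families of ``rooted gadgets'' (the graphs of Figures~\ref{FanRootObs}, \ref{direction}, \ref{if9ton}) at a common vertex $v$, the natural tool is Lemma~\ref{glue} together with Lemma~\ref{lem:clos}: to exhibit a ${\cal D}^{1}$-member as a contraction of $G$ it suffices to find, hanging off a single vertex of $G$, the required number of vertex-disjoint rooted pieces each of which contracts (with the appropriate roots) onto one of the listed gadgets.

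First I would dispose of item~1. By Lemma~\ref{lema:outerplanar}, a non-outerplanar graph contracts to a member of ${\cal O}_{1}\subseteq{\cal D}^{1}$; since ${\cal Q}$ contains no member of ${\cal D}^{1}$ and $\cmp$ is contraction-closed, Lemma~\ref{goinaldirs}(iii) forbids this, so $G$ is outerplanar. From now on every structural statement is read inside the (unique) outerplanar embedding, and I may freely use the block/cut-vertex vocabulary and the hair/bridge/cycle/essential classification of the preliminary observation. For item~2: a light cut-vertex $c$ of degree $\le 2$ lies on exactly one hair block, so it has a single pendant neighbour and at most one other neighbour; contracting that pendant edge (and, if present, the edge to the other neighbour) strictly simplifies $G$ without destroying membership in $\mathcal{G}[\cmp,\ge 3]$, contradicting Lemma~\ref{goinaldirs}(ii) (a proper contraction would still need $\cmp\ge 3$). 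Items~3--8 are then proved by the same template, escalating in difficulty: in each case one assumes the forbidden configuration (an essential block with a third haploid face; a block with $\ge 4$ cut-vertices; a non-haploid-vertex cut-vertex; $\ge 3$ heavy cut-vertices; the special ``two-not-both-heavy'' or face-assignment failures), one identifies three (resp. two) vertex-disjoint branches attached at a common vertex, one checks that each branch, rooted appropriately, contracts onto a gadget from the relevant figure, and one invokes Lemma~\ref{glue}/Lemma~\ref{lem:clos} to conclude that $G$ contracts to a graph in ${\cal O}_{10}$ (resp. ${\cal O}_{11}$ or ${\cal O}_{12}$), contradicting Lemma~\ref{goinaldirs}(iii). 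The equivalence $\cmms=\cmp$ of Lemma~\ref{equivalence} lets me phrase the lower bounds ``$\cmp\ge 3$'' in the more intuitive search-game language when convenient.

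The main obstacle is item~8, and to a lesser extent item~7: these are exactly the statements that encode the ``sense of direction'' phenomenon advertised in the introduction. Here a crude ``contract to a forbidden minor'' argument is not enough, because a badly-placed pair of heavy cut-vertices need not by itself raise the unrooted parameter $\cmp(G)$ above $2$ --- what fails is that every \emph{connected monotone} expansion is forced, by the in/out structure of the essential block and its two haploid faces, to sweep the block in a direction incompatible with clearing all three branches with two searchers. So for item~8 I would argue at the level of rooted pieces: using the \textbf{glue} operation one writes the relevant part of $G$ as ${\bf glue}$ of the branch at $c_{3}$, the essential block in-rooted/out-rooted at $\{c_{1}\},\{c_{2}\}$, and the branches beyond $c_{1},c_{2}$; if no valid face-assignment exists then for \emph{both} orientations of the block one of the three branches forces $\cmp\ge 3$ of the corresponding rooted gadget (this is where the directed obstructions of Figures~\ref{direction} and \ref{if9ton} enter), hence by Lemma~\ref{glue} some orientation already gives $\cmp\ge 3$, and feeding the resulting rooted contraction back through Lemma~\ref{lem:clos} produces a member of ${\cal O}_{11}\cup{\cal O}_{12}$ as a contraction of $G$. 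The bookkeeping --- verifying, gadget by gadget, which rooted pieces have $\cmp\le 2$ and which are forced to $\cmp\ge 3$ depending on where the roots sit --- is finite but is the real content of items~7 and~8; items~3--6 are comparatively routine instances of the same template.
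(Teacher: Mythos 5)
Your item~1 matches the paper (Lemma~\ref{lema:outerplanar} plus Lemma~\ref{goinaldirs}.iii), and your item~2 is the paper's argument (contract the hair block and observe the parameter is unchanged, contradicting minimality). But your template for items~3--8 has a genuine structural gap. You propose to reach a contradiction by exhibiting three (resp.\ two) vertex-disjoint rooted branches hanging off a \emph{single common vertex}, each contracting onto a gadget, and then gluing to land in ${\cal O}_{10}$, ${\cal O}_{11}$ or ${\cal O}_{12}$. That mechanism cannot work for these items: the violating configurations live \emph{inside a single $2$-connected block} --- an essential block with three haploid faces (item~3), a block with four cut-vertices (item~4), a non-haploid cut-vertex of a non-trivial block (item~5), three heavy cut-vertices of one block (item~6), and the edge/face-incidence failures of items~7 and~8 --- and a $2$-connected block is not a union of branches meeting at one vertex. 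Every member of ${\cal O}_{10}$ has a vertex of spine-degree~$3$, which a counterexample to, say, item~3 need not possess, so no contraction of such a $G$ onto ${\cal O}_{10}$ is available. The paper instead contracts $G$ directly onto the small block-shaped obstructions ${\cal O}_{2},\ldots,{\cal O}_{9}$ of Figure~\ref{FanRossotsObs} (item~3 $\to {\cal O}_2$, item~4 $\to {\cal O}_3$, item~5 $\to {\cal O}_4$, item~6 $\to {\cal O}_5$, item~7 $\to {\cal O}_4,{\cal O}_6,{\cal O}_7,{\cal O}_8,{\cal O}_9$, item~8 $\to {\cal O}_9$); these are members of ${\cal D}^1$ that your plan never invokes, and without them the contradictions simply do not materialize.

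Your assessment of item~8 is also off the mark in the other direction: you claim a crude contraction argument cannot settle it and propose routing through the directional rooted obstructions of Figures~\ref{direction} and~\ref{if9ton}. In the paper item~8 \emph{is} settled by a one-line contraction to ${\cal O}_9$ (if some haploid face $F$ of the essential block meets neither heavy cut-vertex, contract to a graph in ${\cal O}_9$), followed by a short case analysis for the light third cut-vertex. Moreover, the directional machinery you want to use (Lemmas~\ref{bolek}, \ref{cnrptal}, \ref{ro4hk}, \ref{moreli}) is developed \emph{after} Lemma~\ref{basic_structure} and depends on the block structure it establishes, so importing it here would create a circular dependency. The ``sense of direction'' content is real, but it is deployed later (Lemmas~\ref{tboths} and~\ref{tplerr}), not inside this lemma.
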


\begin{proof} 1. By the third property of Lemma~\ref{goinaldirs}, $G$ cannot be contracted 
to a graph in ${\cal O}_1$ 
and  therefore, from Lemma~\ref{lema:outerplanar}, $G$ must be outerplannar.

2. Let $c$ be a light cut-vertex of a block $B$ in $G$, with degree 2 (notice that, as $c$ is a 
cut-vertex, $c$ cannot 
have degree 1 or 0). That means that $c$ belongs to a path with at least two edges, the hair 
block $B$ and an edge 
say $e$. Observe that $\cmp(G/B)=\cmp(G)$, contradicting to the second property of 
Lemma~\ref{goinaldirs}.

3.  Let $B$ be an essential block of $G$. As it is essential, it has at least one chord, therefore
it has at least 2 haploid faces. Assume, that $B$ has at least 3 haploid faces. Choose 3 of them, 
say $F_1, F_2$ and $F_3$ (see Figure~\ref{helpinproof1}). 
Let $S\subseteq E(B)$ be the set of all chords incident to $B$.  Contract in $G$ 
all edges in $E(G)\setminus S$ not belonging to those faces. Then, for each of the three faces, 
contract all but 
two edges not in $S$ that are incident to  $F_1, F_2$ and $F_3$  and  notice that the obtained 
graph is the graph in 
${\cal O}_2$, a contradiction to the third property of Lemma~\ref{goinaldirs}. 

\begin{figure}[h]
\begin{center}
\includegraphics{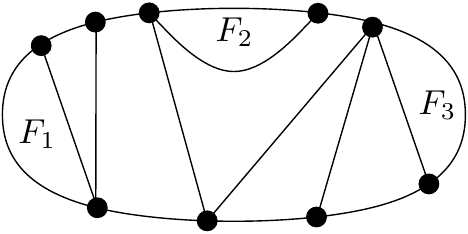}\end{center}
\caption{An example for the proof of Lemma~\ref{basic_structure}.3.}
\label{helpinproof1}
\end{figure}

4. Let $B$ be a block of $G$ containing more than 3 cut-vertices. Chose four of them, 
say $c_1,c_2,c_3$ and $c_4$. 
Let $S\subseteq E(B)$ be the set of all chords incident to $B$ (see Figure~\ref{helpinproof2}). 
Contract all edges in $E(G)\setminus S$  not having an 
endpoint in $\{c_1,c_2,c_3,c_4\}$. Then, contract all edges $e\in E(B)\setminus S$ such that 
$e\nsubseteq\{c_1,c_2,c_3,c_4\}$ 
and  all edges not in $E(B)$, except from one for each of the cut-vertices. Notice that the obtained 
graph belongs to ${\cal O}_3$, 
a contradiction to the third property of Lemma~\ref{goinaldirs}.

\begin{figure}[h]
\begin{center}
\includegraphics{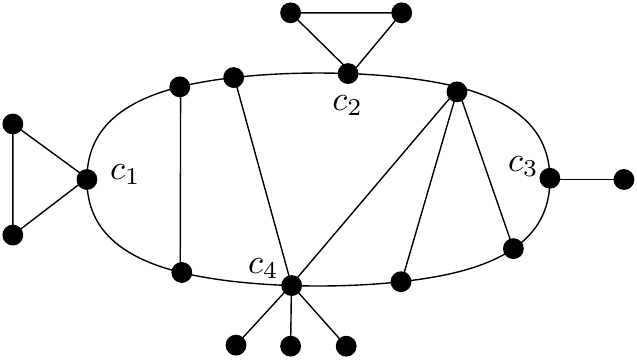}\end{center}
\caption{An example for the proof of Lemma~\ref{basic_structure}.4.}
\label{helpinproof2}
\end{figure}

5. Let $B$ be a block of $G$ containing a cut-vertex $c$ that is not haploid and  let 
$S\subseteq E(B)$ be the set of 
all chords incident to $B$ (see Figure~\ref{helpinproof3}). Contract all edges in 
$E(G)\setminus E(B)$ not having $c$ as endpoint and  all edges in 
$E(B)\setminus S$ not having $c$ as endpoint, except from two edges for each of the haploid faces. 
Then contract 
all edges not in $E(B)$ with $c$ as endpoint, except for one. Notice that the obtained graph 
belongs to ${\cal O}_4$, 
a contradiction to the third property of Lemma~\ref{goinaldirs}.

\begin{figure}[h]
\begin{center}
\includegraphics{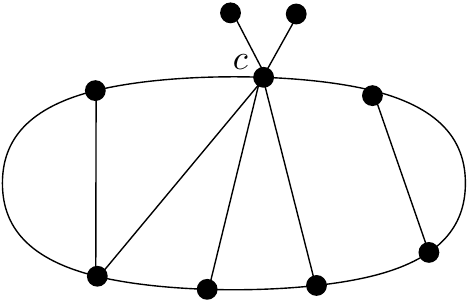}\end{center}
\caption{An example for the proof of Lemma~\ref{basic_structure}.5.}
\label{helpinproof3}
\end{figure}

6. Let $B$ be a block of $G$ containing three heavy cut-vertices, say $c_1,c_2$ and $c_3$ 
(see Figure~\ref{helpinproof4}). 
We contract all edges in $B$ except from 3 so that $B$ is reduced to a triangle $T$ with 
vertices $c_1,c_2$ and $c_3$. 
Then, in the resulting graph $H$, for each $c_{i}, i\in\{1,2,3\}$, 
in ${\cal C}_{H}(c_{i})\setminus \{T\}$ contains either a non trivial block or at least two hair blocks.
In any case, $H$ can be further contracted to one of the graphs in ${\cal O}_{5}$ a 
contradiction to the third property of Lemma~\ref{goinaldirs}.

\begin{figure}[h]
\begin{center}
\includegraphics{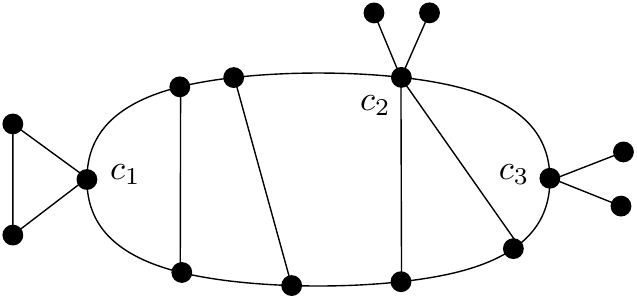}\end{center}
\caption{An example for the proof of Lemma~\ref{basic_structure}.6.}
\label{helpinproof4}
\end{figure}

7. Let $\{x,y,z\}$ be three cut-vertices of a (not-trivial) block $B$. If no two of them are 
connected by an outer edge, then contract 
all blocks of $G$, except $B$, to single edges, then contract all outer edges of $B$ that do 
not have an endpoint in 
$\{x,y,z\}$ and continue contracting hair blocks with a vertex of degree $\geq 4$, as long as this 
is  possible. This 
creates either a graph in ${\cal O}_{6}$ or a graph that after the contraction of a hair block makes 
a graph in 
${\cal O}_{7}$ or a graph that after the contraction of two hair blocks is a graph makes a graph in  
${\cal O}_{4}$ and, 
in any case, we have a  contradiction to the third property of Lemma~\ref{goinaldirs}.
We contract $G$ to a  graph $H$ as follows: 
\begin{itemize}
\item if for some $w\in\{x,y,z\}$ the set $\overline{\cal C}_{G}(w,B)$ contains at least 
two elements, then
contract  the two of  them to a pendant edge (that will have $w$ as an endpoint)
and the rest of them to $w$.
\item if for some $w\in\{x,y,z\}$  the set $\overline{\cal C}_{G}(w,B)$ contains 
only one element that is not a hair, then contact it to a triangle (notice that this is always 
possible because of (2)).
\end{itemize}
\noindent{\em Case 1}. $|V(B)|\in\{3,4\}$. Then because of (6), one, say $x$ of $\{x,y,z\}$ is non-heavy 
and there  is an outer edge connecting $x$ with one, say $y$, vertex in $\{x,y,z\}$. Then $x,y$ 
is the required pair of vertices.

\noindent{\em Case 2}.   $|V(B)|>4$ and there is at most one outer edge $e$ with endpoints from 
$\{x,y,z\}$ in $H$.
W.l.o.g. we assume that $e=\{x,y\}$. Notice  $e$ is a haploid edge, otherwise $H$ can be 
contracted to 
the 5th graph in ${\cal O}_{6}$. Moreover at least one of  $x$, $y$ is non-heavy, otherwise 
$H$ can be contracted 
to one of the graphs in ${\cal O}_{8}\cup {\cal O}_{9}$ (recall that $B$ may have one or two 
haploid faces).

\noindent{\em Case 3}.  There are two outer edges with endpoints from $\{x,y,z\}$.
W.l.o.g. we assume that these edges are $\{x,y\}$ and $\{y,z\}$. 
One, say $\{x,y\}$, of  $\{x,y\}$, $\{y,z\}$ is haploid, otherwise
 $H$ can be contracted to some graph in ${\cal O}_{4}$. 
If $\{x,y\}$ has a light endpoint, then we are done, otherwise, from (6),
$z$ is light. In this remaining case,  if  if $\{z,y\}$ is haploid, then it is also the required edge, otherwise 
$H$ can be contracted to a graph in ${\cal O}_{9}$.

8. Let $x$ and $y$ be two heavy cut-vertices vertices of $B$. From (5)  $x,y$ are among the 
vertices that 
are incident to the faces $F_{1}$ and $F_{2}$. Suppose, in contrary, that for some face, say 
$F\in\{F_1,F_2\}$, there is no cut vertex in $\{x,y\}$ 
that is incident to $F$. Then $G$ can be contracted to one of the graphs in ${\cal O}_{9}$.
This is enough to prove the first statement except from the case
where $x$ and $y$ are  both lying in both haploid faces and there is a third light cut-vertex $z$
incident to some, say $x$, 
of $x$, $y$.  In this case, $x$ is assigned the face where $z$ belongs and $y$ is assigned to 
the other.
\end{proof}

Let $G\in {\cal Q}$ and let $B$ be a block of $G$. Let also $S$ be the 
set of cut vertices of $G$ that belong to $B$. According to Lemma~\ref{basic_structure},
we can define a rooted graph ${\bf G}_{B}=(B,X,Y)$ such that 
\begin{itemize}
\item $\{X,Y\}$ is a partition of $S$ where $X$ and $Y$ are possibly empty.
\item if $B$ has a chord, then all vertices in $X$ and $Y$ are haploid. 
\item $|X|\leq 1$ and $|Y|\leq 2$.
\item If $|Y|=2$, then its vertices are connected with an edge $e$ and one of them is light and, 
moreover, in the case where $B$ has a chord then $e$ is haploid.
\item If $B$ has a chord, we name the haploid faces of $B$ by $F_{1}$ and $F_{2}$ such that 
all vertices in $X$ are incident to $F_{1}$ and all vertices od $Y$ are incident to $F_{2}$.
\end{itemize}

\begin{lemma}
\label{bolek}
Let $G\in {\cal Q}$ and let $B$ be a block of $G$. Then $\cmp({\bf G}_{B})\leq 2$.
\end{lemma}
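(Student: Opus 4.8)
The proof is by contradiction: assume $G\in{\cal Q}$ has a block $B$ with $\cmp({\bf G}_B)=\cmp(B,X,Y)\ge 3$, and let us contract $G$ to a member of ${\cal D}^{1}$, contradicting Lemma~\ref{goinaldirs}.iii. It is worth saying up front why property iii is the one that must be invoked. If $B\ne G$ then, contracting everything outside $B$ into the cut-vertices of $B$, we see that $B$ is a proper contraction of $G$, whence $\cmp(B)\le 2$ by Lemma~\ref{goinaldirs}.ii; by Lemma~\ref{emptymake} the whole difficulty therefore lies in the \emph{rooting} of $B$ inside $G$ — in the endpoints and the direction the search is forced to respect once $B$ sits inside $G$ — and not in $B$ as an abstract graph. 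In particular the two pendant gadgets added in $\rep(B,X,Y)$ are by themselves too weak to realise $\cmp(B,X,Y)\ge 3$ inside $G$, and one must use the actual parts of $G$ attached to $B$.

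I would first fix the ambient structure. For each cut-vertex $c$ of $G$ lying in $B$ (i.e.\ each $c\in X\cup Y$) let $H_c$ be the subgraph of $G$ induced by $\{c\}$ together with the vertex sets of the members of $\overline{\cal C}_G(c,B)$; then $H_c$ is connected, meets $B$ only in $c$, and has at least one edge at $c$. Since $X\cup Y$ is the \emph{full} set of such cut-vertices, $G$ is the union of $B$ with all the $H_c$'s, which pairwise share no vertex. By Lemma~\ref{basic_structure} each $H_c$ is outerplanar with blocks among hairs, bridges, cycles and essential blocks, and (Lemma~\ref{goinaldirs}.iii) neither $B$ nor any $H_c$ contracts to a member of ${\cal D}^{1}$. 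Two reductions will be available: contracting an $H_c$ towards $c$ produces any one of a short list of small gadgets at $c$ (a pendant edge, a triangle, two pendant edges, and so on); and, because the $H_c$'s and $B$ pairwise share only the vertex $c$, any choice of such reductions for the different $c$, together with a contraction performed inside $B$, can be realised as a single contraction of $G$.

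The core step is to turn $\cmp(B,X,Y)\ge 3$ into a forbidden substructure. Using the monotone--connected expansion description of $\cmp$ (Lemma~\ref{equivalence}) and its closure under rooted contraction (Lemma~\ref{lem:clos}), I would contract edges of $B$ while keeping $X$ and $Y$ — hence the rooting — intact, and prove that the rooted block obtained this way must become one of finitely many minimal \emph{bad rooted blocks}, namely the pieces carrying a marked vertex $v$ in Figures~\ref{FanRootObs}, \ref{direction} and \ref{if9ton}. This is carried out as a case analysis on the type of $B$ (trivial block, cycle block, essential block with its two haploid faces $F_1,F_2$), on which of $X,Y$ is empty, and on $|Y|\in\{1,2\}$, using items 2--8 of Lemma~\ref{basic_structure} to control the number of cut-vertices, their light/heavy status, and their incidences with $F_1,F_2$. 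A reduction to a piece of Figure~\ref{FanRootObs} captures a block that is expensive whenever it must be entered from the ambient graph at one cut-vertex and left through another; a reduction to Figure~\ref{direction} or \ref{if9ton} is a genuine \emph{direction} obstruction, in which $B$ is cheap when searched freely (its free search number is at most $2$ whenever $B\ne G$, as observed above) but costs $3$ once the search is forced to run towards, or away from, the $X$-side. Combining such a reduced $B$ with the gadgets obtained above from the parts $H_c$ attached at the vertex playing the role of $v$ — so that the number of glued copies around $v$ is the three required by ${\cal O}_{10}$, or the two required by ${\cal O}_{11}$ and ${\cal O}_{12}$ — yields precisely one of the graphs defining ${\cal O}_{10}$, ${\cal O}_{11}$ or ${\cal O}_{12}$; and when $B$ is small or $X\cup Y=\emptyset$ (so $G=B$ is $2$-connected with $\cmp(G)=\cmp({\bf G}_B)\ge 3$) one lands instead on a member of ${\cal O}_1\cup\dots\cup{\cal O}_9$. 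In every branch $G$ contracts to a member of ${\cal D}^{1}$, the desired contradiction, and hence $\cmp({\bf G}_B)\le 2$.

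The main obstacle is exactly this core step: proving, by a finite but intricate inspection, that every rooted block satisfying the five defining properties of ${\bf G}_B$ and with $\cmp\ge 3$ contains one of the listed pieces as a rooted contraction. The delicate part is not the sheer size of the case analysis but the feature that is new relative to unconnected search, namely having to track the \emph{direction} of the expansion — a rooted block can be cheap searched one way and cost $3$ searched the other — which is precisely why the separate ``direction'' pieces of Figures~\ref{direction} and \ref{if9ton}, and hence the families ${\cal O}_{11}$ and ${\cal O}_{12}$ inside ${\cal D}^{1}$, are needed at all. A secondary technicality is to check that the gadget-reductions of the $H_c$'s are rich enough to supply the extra glued copies; here one uses that $G\in{\cal Q}$ forces each $H_c$ to be outerplanar without any ${\cal D}^{1}$-contraction, yet, when $c$ is a heavy cut-vertex, large enough to provide those copies.
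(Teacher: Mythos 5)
Your proposal takes a genuinely different route from the paper, but it has a gap that is not a matter of presentation: the step that carries all of the mathematical content is never executed. The paper proves this lemma \emph{directly and constructively}: since $B$ is $2$-connected and outerplanar, its vertices lie on a unique Hamiltonian cycle, and the two haploid faces $F_1$ and $F_2$ (to which $X$ and $Y$ are respectively incident, by the definition of ${\bf G}_B$) supply a ``sense of direction''; the proof then writes down an explicit greedy ordering of $E(B)$ starting at $X$ and ending at $Y$ whose prefix sequence is a monotone connected $(E^{\rm in},E^{\rm out})$-expansion of cost at most $2$. No contradiction and no contraction to ${\cal D}^1$ is needed, because the lemma asserts unconditionally that the \emph{correctly rooted} block is always cheap --- there is no ``bad case'' to exclude. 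Your plan instead assumes $\cmp({\bf G}_B)\ge 3$ and claims that a ``finite but intricate inspection'' shows ${\bf G}_B$ must then contract to one of the rooted pieces in Figures~\ref{FanRootObs}, \ref{direction} and \ref{if9ton}; you explicitly flag this inspection as the main obstacle and do not carry it out. That inspection \emph{is} the lemma, so what you have is a strategy, not a proof.

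Two further concrete problems with the strategy itself. First, the target list is wrong: the set ${\cal A}$ of Figure~\ref{FanRootObs} characterises fans, i.e.\ doubly rooted graphs of the form $(H,\{v\},\{v\})$ hanging off a single cut-vertex, while Figures~\ref{direction} and \ref{if9ton} are obstructions for the \emph{extended} extremal blocks ${\bf B}_0^*$ and ${\bf B}_{r+1}^*$ (which already include hair attachments) and ${\cal L}$ is for the extended central blocks ${\bf B}_i^*$; none of these is a list of minimal rooted blocks $(B,X,Y)$ with $\cmp\ge 3$, so the case analysis you describe does not have a correct finite target to aim at. Second, the equivalences ``no contraction to ${\cal L}$ (resp.\ ${\cal B}$, ${\cal C}$) iff $\cmp\le 2$'' are Lemmas~\ref{cnrptal}, \ref{ro4hk} and \ref{moreli}, and their proofs in the paper \emph{use} Lemma~\ref{bolek}; rebuilding their hard direction inside the proof of Lemma~\ref{bolek} is at best a large duplication of work and at worst circular unless you redo those arguments from scratch. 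If you want a correct proof along independent lines, the honest path is the paper's: exhibit the cost-$2$ expansion explicitly using the Hamiltonian cycle and the placement of $X$ and $Y$ on the two haploid faces guaranteed by Lemma~\ref{basic_structure}.
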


\begin{proof}
We examine the non-trivial case where $B$ is a non-trivial block and contains two haploid faces 
$F_{1}$ and $F_{2}$.
As $B$ is 2-connected and outer-planar, all vertices of $V(B)$ belong to the unique hamiltonian 
cycle of $B$, say $C$. 
Our proof is based on the fact that there are exactly two haploid faces and this gives a sense 
of direction on how the search should be performed.
To make this formal, we create an ordering ${\cal A}$ of the edges of $E(B)$ using the f
ollowing procedure.

\begin{tabbing}
1. {\bf if} $X\neq \emptyset$, \= {\bf then} \\
2. \> $Q\leftarrow X$, \\
3. \> {\bf else} \\
4. \>   $Q\leftarrow\{x\}$ \=
where   $x$ is an arbitrarily chosen vertex  \\
 \> \> in the boundary of $F_{1}$.\\
5. $R\leftarrow Q$\\
6. $i\leftarrow 1$\\
7. {\bf while} \= there is a vertex $v$ in $V(B)\setminus R$ that is connected with some, say $u$,   \\
 \> vertex in $Q\setminus Y$ whose unique neighbor in  $V(B)\setminus R$ is $v$, \\
8. \>  $R\leftarrow R\cup\{v\}$\\
9. \>  $Q\leftarrow (Q\setminus \{u\})\cup\{v\}$\\
10. \>  $e_{i}=\{u,v\}$ \\
11. \>  $i\leftarrow i+1$ \\
12. \>  {\bf if} $Q\in E(B)$, \= {\bf then} \\
13. \> \> $e_{i} \leftarrow Q$, \\
14. \> \> $i\leftarrow i+1$\\
15. {\bf if} $Y\in E(B)$, \= {\bf then}\\
16. \> $e_{i} \leftarrow Y$
\end{tabbing}

Let $E^{\rm in},E^{\rm out}$ be the edge extensions of $\rep({\bf G}_B)$, and let  
${\bf prefsec}({\cal A})=\langle A_{0}$, 
$\ldots,A_{r}\rangle$.  It is easy to verify  that 
${\cal E}=\langle E^{\rm in}, A_{0}\cup E^{\rm in} ,\ldots,A_{r}\cup E^{\rm in}\rangle$
is a monotone and connected $(E^{\rm in},E^{\rm out})$-expansion of $\rep({\bf G}_{B})$, 
with cost at most 2.
\end{proof}

\begin{figure}[h]
\begin{center}
 \scalebox{0.6}{\includegraphics{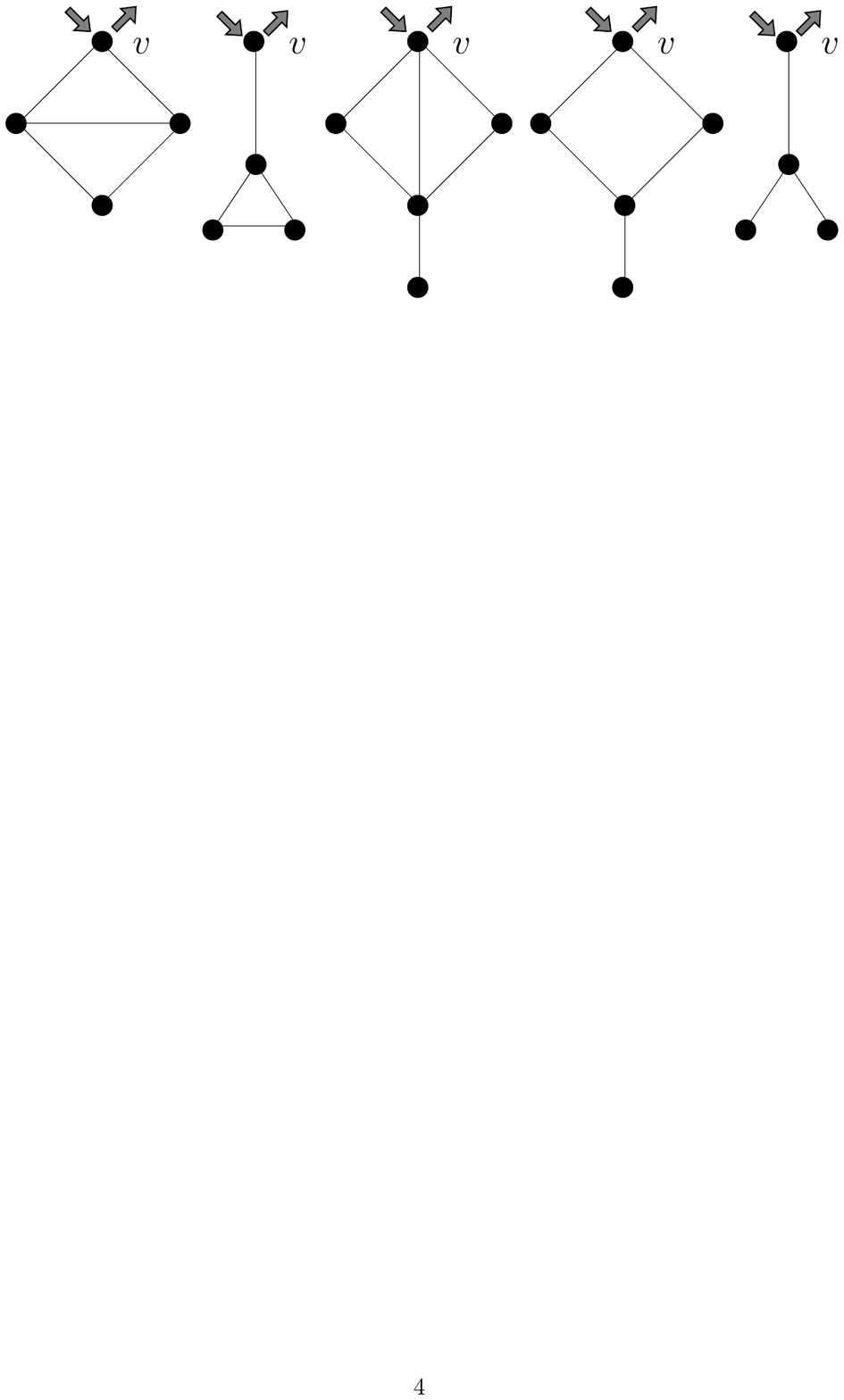}}
 \end{center}
\caption{The set ${\cal A}$ contains 5 $r$-graphs, each
 of the form $(G,\{v\},\{v\})$.}
\label{FanRootObs}
\end{figure}

\subsection{Fans}

Let $G$ be a graph and $v$ be a vertex in $V(G)$. We denote by ${\bf G}^{(v)}$ the rooted graph 
$(G,\{v\},\{v\})$ and we refer to it as the {\em graph $G$ doubly rooted on $v$}.

We say that a graph $G$, doubly rooted on some vertex $v$ is a {\em fan}
if none of the graphs in the set ${\cal A}$ depicted in Figure~\ref{FanRootObs} is a contraction 
of the rooted graph 
${\bf G}^{(v)}$ and $G$ is outerplanar.

\begin{lemma}
\label{seasrchfun}
Let ${\bf G}^{(v)}=(G,\{v\},\{v\})$ be a graph doubly rooted at some vertex $v$. If 
${\bf G}^{(v)}$ is a fan, then 
$\cmp({\bf G}^{(v)})\leq 2$.
\end{lemma}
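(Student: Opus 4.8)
The plan is to invoke Lemma~\ref{equivalence} and instead construct a monotone connected $(E^{\rm in},E^{\rm out})$-expansion $\cal E$ of $\rep({\bf G}^{(v)})$ of cost at most $2$, where $E^{\rm in}=\{\{v,u^{\rm in}\}\}$ and $E^{\rm out}=\{\{v,u^{\rm out}\}\}$. The first observation to record is that, since the edge $\{v,u^{\rm out}\}$ never enters any $A_i$ with $i<r$, the vertex $v$ lies in $\partial_{\rep({\bf G}^{(v)})}(A_i)$ for every such $i$; hence in any cost-$2$ expansion the boundary has the form $\{v\}\cup B_i$ with $|B_i|\le 1$, and the expansion must sweep $G$ outward from $v$ exposing at most one vertex besides $v$ at a time (with the usual pendant-edge relaxation coming from the term $q_i$). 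So it suffices to produce an edge ordering of $E(G)$ that, read from $v$ outwards, has this ``single moving frontier'' property. Note that the sweep of Lemma~\ref{bolek} is \emph{not} directly applicable here, since it does not keep its in-root on the frontier; a construction specific to fans is needed.

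First I would establish the structural normal form of a fan. Using Lemma~\ref{goinaldirs}(iii) (${\bf G}^{(v)}$ has no member of $\cal A$ as a contraction) together with the outerplanarity of $G$ and the block/cut-vertex analysis underlying Lemma~\ref{basic_structure} and Lemma~\ref{bolek}, I would argue that $G$ has a ``caterpillar of blocks'' shape relative to $v$: there is a path $B_1,x_1,B_2,x_2,\dots,x_{m-1},B_m$ of blocks and cut-vertices with $v\in V(B_1)$, every block off this spine is a hair block, all hair blocks are attached at $v$ or at a single cut-vertex near the far end, and each spine block, being outerplanar with at most its two haploid faces, can be oriented so that those faces point ``from the entry cut-vertex $x_{i-1}$ forward to $x_i$'' exactly in the sense of the direction argument in the proof of Lemma~\ref{bolek}. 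Each of the five graphs of $\cal A$ is used here to forbid one obstruction to this shape: excess $2$-connected structure (a $K_4$-type piece), a pendant edge or a side branch sitting where it would force a third frontier vertex, a block on the spine that cannot be linearly oriented, and so on.

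With the normal form in hand, I would build $\cal E$ directly, ordering $E(G)$ block by block along the spine while keeping $v$ on the frontier at every step: inside each spine block $B_i$, run the ``two haploid faces give a direction'' sweep of Lemma~\ref{bolek} with orientation fixed so that the internal frontier starts at $x_{i-1}$ and ends at $x_i$; interleave the at-most-one pendant edge of each hair block at the step where the frontier sits at its attachment point, so that it contributes only through $q_i$; and concatenate these per-block orderings as in Lemma~\ref{glue}, checking monotonicity and connectivity along the way. The fan hypothesis is precisely what makes this possible with the extra vertex $v$ permanently on the boundary: because every spine block is oriented ``forward'' and has only its two haploid faces, its sweep never exposes two internal frontier vertices simultaneously, so the boundary is $\{v\}\cup\{\text{current frontier}\}$ throughout, of size at most $2$ (and of size $1$ with $q_i=1$ exactly when a pendant edge is cleaned). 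Re-inserting the edges $\{v,u^{\rm in}\}$ and $\{v,u^{\rm out}\}$ then yields an $(E^{\rm in},E^{\rm out})$-expansion of $\rep({\bf G}^{(v)})$ of cost at most $2$.

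The main obstacle is the structural step: converting ``${\bf G}^{(v)}$ avoids the five graphs of $\cal A$ as contractions'' into the caterpillar-of-blocks normal form, and in particular verifying that this shape can genuinely be swept with $v$ pinned from the first move to the last. This is strictly more delicate than the single-block situation of Lemma~\ref{bolek}: there, the root that must stay on the boundary is the \emph{out}-root $Y$ and the sweep is allowed to move the in-root freely, whereas here $v$ is simultaneously the in- and out-root, so the orientation of \emph{every} block of the spine must be compatible with a single global sweep emanating from $v$, and it is exactly the excluded obstructions in $\cal A$ that rule out the configurations where no such globally consistent orientation exists.
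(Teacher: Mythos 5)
Your high-level architecture is the right one (pin $v$ on the boundary for the whole expansion, allow at most one further frontier vertex, and let pendant edges be absorbed by the $q_i$ term), but the structural step --- which is the actual content of the lemma --- is misidentified, and the construction you build on top of it would not achieve cost $2$. The paper's proof shows, directly from the definition of a fan (exclusion of the five rooted graphs of ${\cal A}$ as contractions of ${\bf G}^{(v)}$, plus outerplanarity --- note this is definitional, not an appeal to Lemma~\ref{goinaldirs}, which concerns ${\cal D}^1$ and graphs in ${\cal Q}$), that $G\setminus v$ is a disjoint union of \emph{paths}, each with at least one endpoint adjacent to $v$: a component contractible to $K_3$ or $K_{1,3}$ forces either non-outerplanarity or a contraction to one of the graphs of ${\cal A}$, and a path with neither endpoint adjacent to $v$ forces the last graph of ${\cal A}$. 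In particular every block of $G$ not containing $v$ is a bridge, so there is no ``caterpillar of blocks'' with essential blocks sitting away from $v$; the only non-trivial blocks are fan-shaped blocks through $v$ itself, followed by pendant paths.

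This is not a cosmetic difference. On the normal form you posit --- a spine $B_1,x_1,B_2,\ldots,B_m$ in which some $B_i$ with $i\geq 2$ is a non-trivial block not containing $v$ --- your plan of running the two-haploid-face sweep of Lemma~\ref{bolek} inside $B_i$ necessarily places two vertices of $B_i$ on the frontier at some step; together with the permanently pinned $v$ the boundary then has size $3$, so the cost is $3$, not $2$. Such a graph is in fact \emph{not} a fan (its component in $G\setminus v$ contracts to $K_3$), but your sketch neither proves this nor avoids relying on it, so the argument as written either proves too little (it never rules these configurations out) or fails on the configurations it admits. Once the correct structure is in hand the expansion needs no block machinery at all: for each path $P_i=v_1^i,\ldots,v_{j_i}^i$ with $v_1^i$ adjacent to $v$, take the alternating order $\{v,v_1^i\},\{v_1^i,v_2^i\},\{v,v_2^i\},\{v_2^i,v_3^i\},\ldots$ (discarding non-edges), concatenate over the components after $e^{\rm in}$, and check that the boundary is always $\{v\}$ plus at most one path vertex.
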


\begin{proof} 
We claim first that if ${\bf G}^{(v)}$ is a fan, then the graph $G\setminus v$ is a collection 
of paths where each of 
them has at least one endpoint 
that is a neighbour of $v$. Indeed, if this is not correct, then some of the connected components of 
$G\setminus v$ would be contractible to either a $K_{3}$ or a $K_{1,3}$.
In the first case, $G$ is either non-outperlanar or ${\bf G}^{(v)}$ can be contracted to to the 
first two rooted graphs of 
Figure~\ref{FanRootObs}. In the second case $G$ is either non-outeplanar or 
${\bf G}^{(v)}$ the last three graphs of Figure~\ref{FanRootObs}.
Moreover, if both endpoints of a path in the set of connected components $G\setminus v$ 
are non adjacent to $v$ in $G$, then ${\bf G}^{(v)}$ can be contracted to the last rooted 
graph in Figure~\ref{FanRootObs}.

Let now $P_{1},\ldots, P_{r}$ be the connected components of $G\setminus v$
and, for each $i\in\{1,\ldots,r\}$, let $\{v_{1}^{i},\ldots,v_{j_i}^{i}\}$ be the vertices of $P_{i}$,
ordered as in $P_{i}$, such that $v_{1}^{i}$ is adjacent to $v$ in $G$.
Let  $u^{\rm in}$ and $u^{\rm out}$ be the two vertices added in ${\bf enh}({\bf G}^{(v)})$.  
If $e^{\rm in}=\{v,u^{\rm in}\}$ and $e^{\rm out}=\{v,u^{\rm out}\}$ then the edge expansions of 
${\bf enh}({\bf G}^{(v)})$ is $E^{\rm in}=\{e^{\rm in}\}$ and $E^{\rm out}=\{e^{\rm out}\}$.

For each $i\in\{1,\ldots,r\}$ we define the edge ordering 
$${\cal A}_i=\langle \{v,v_1^i\},\{v_1^i, v_2^i\},\{v,v_2^i\},\{v_2^i, v_3^i\}\ldots,\{v_{j_i}^i,v\}\rangle,$$
then we delete from ${\cal A}_i$ the edges not in $E(G)$. Let ${\cal A}_i'$ be the orderings 
obtained after the edge deletions. 
We define ${\cal A}=\langle e^{\rm in} \rangle \oplus {\cal A}_1\oplus\cdots\oplus{\cal A}_r$. 
Notice that ${\bf prefsec}({\cal A})$ is a monotone and connected 
$(E^{\rm in},E^{\rm out})$-expansion of ${\bf enh}({\bf G}^{(v)})$ with cost at most 2. 
Therefore, $\cmp({\bf G}^{(v)})\leq 2$ as required.\end{proof}

\subsection{Spine-degree and central blocks}

Given a graph $G$ and a vertex $v$ we denote by ${\cal C}^{(v)}_{G}$ the set of all 
graphs in  ${\cal C}_{G}(v)$, each doubly rooted on $v$. The {\em spine-degree} of $v$ 
is the number of doubly rooted
graphs in   ${\cal C}^{(v)}_{G}
$ that are not fans.

A cut-vertex of a graph $G$ is called {\em central cut-vertex}, if it has spine-degree greater than 1 
and  a block 
of $G$ is called {\em central block} if it contains at least 2 central cut-vertices.

\begin{lemma}
\label{lem:atmost2nonfuns} 
Let $G\in {\cal Q}$. The following hold:
\begin{itemize}
\item[1.] All vertices of $G$ have spine-degree at most 2. 
\item[2.] None of the blocks of $G$ contains more than 2 central cut-vertices.
\item[3.]  $G$ contains at least one central cut-vertex
\item[4.] There is a total ordering $B_{1},B_{2},\ldots,B_{r}$  ($r\geq 0$)
of the central blocks of $G$ and a total ordering $c_1,\ldots,c_{r+1}$ of the central cut-vertices of 
$G$ such that, for $i\in\{1,\ldots,r\}$, the cut-vertices of $B_{i}$ are $c_{i}$ and $c_{i+1}$.
\end{itemize} 
\end{lemma}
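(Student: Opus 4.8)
The plan is to derive all four statements from the fundamental properties of $\mathcal Q$ recorded in Lemma~\ref{goinaldirs}, namely that every proper contraction of $G$ has $\cmp\leq 2$ and that $G$ contains no member of $\mathcal D^1$ as a contraction. Throughout I will use the characterisation of fans (Lemma~\ref{seasrchfun} and its proof): a doubly rooted component $(H,\{v\},\{v\})\in\mathcal C_G^{(v)}$ fails to be a fan exactly when it can be contracted, in the rooted sense, onto one of the five rooted graphs of the set $\mathcal A$ in Figure~\ref{FanRootObs}. So ``spine-degree of $v$'' counts how many components of $G\setminus v$, together with $v$, carry one of these five rooted obstructions.

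For part~1, suppose some vertex $v$ has spine-degree at least $3$, so at least three components $C_1,C_2,C_3$ of $G\setminus v$ are such that $(G[V(C_j)\cup\{v\}],\{v\},\{v\})$ contracts onto a member of $\mathcal A$. Contracting, inside each $C_j$, the appropriate edges realises these three rooted obstructions simultaneously, and contracting everything else in $G$ to $v$ leaves a graph that is precisely one of the $35$ graphs in $\mathcal O_{10}$ (three copies of a Figure~\ref{FanRootObs}-graph glued at their distinguished vertices $v$). This contradicts Lemma~\ref{goinaldirs}(iii); hence every vertex has spine-degree $\leq 2$. Part~2 is then immediate: a central cut-vertex has spine-degree $\geq 2$, so if a block $B$ contained three central cut-vertices $c_1,c_2,c_3$, each $c_i$ would have, outside $B$, at least one non-fan component, and $B$ itself would supply another non-fan component at $c_i$ (one containing the other two cut-vertices and hence a $K_{1,3}$- or $K_3$-type rooted obstruction) — giving spine-degree $\geq 3$ at some $c_i$, contradicting part~1. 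One must check the bookkeeping here carefully: that the ``rest of $B$ seen from $c_i$'' really is a non-fan, which follows because $B$ is $2$-connected and a $2$-connected graph on at least $3$ vertices, doubly rooted at one of them, is never a fan.

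For part~3, if $G$ had no central cut-vertex then every cut-vertex would have spine-degree $\leq 1$, i.e. at each cut-vertex at most one side is a non-fan. A block–cut tree argument then shows $\cmp(G)\leq 2$: root the block–cut tree, use Lemma~\ref{bolek} to search each block with $2$ searchers in the direction dictated by its rooted structure ${\bf G}_B$, use Lemma~\ref{seasrchfun} on the fan branches, and splice these expansions together with Lemma~\ref{glue} and Lemma~\ref{emptymake}; since along any root-to-leaf path at most one ``hard'' (non-fan) branching occurs at each vertex, the glued expansion never exceeds cost $2$. This contradicts Lemma~\ref{goinaldirs}(i), so a central cut-vertex exists. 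Finally, part~4 is the assertion that the central blocks and central cut-vertices form a path in the block–cut tree. A central block has exactly two central cut-vertices (at least two by definition, at most two by part~2), and a central cut-vertex lies in exactly two central blocks — for if a central cut-vertex $c$ lay in three components of $G\setminus c$ each containing a central block, one could, by contracting each such component down to a single non-fan, again manufacture a three-fold obstruction at $c$ and land in $\mathcal O_{10}$, contradicting part~1 via the same mechanism. A connected graph in which every ``central block'' node has degree $2$ and every ``central cut-vertex'' node has degree $2$ in the induced sub-forest of the block–cut tree, and which is nonempty by part~3 and acyclic (being part of a tree), is a path; reading it off gives the claimed orderings $B_1,\dots,B_r$ and $c_1,\dots,c_{r+1}$ with the cut-vertices of $B_i$ being $c_i$ and $c_{i+1}$.

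The main obstacle is part~3: parts 1, 2 and 4 are all ``obstruction-pushing'' arguments of the same flavour (produce three simultaneous $\mathcal A$-type rooted obstructions at one vertex, recognise the result as $\mathcal O_{10}$), whereas part~3 requires the positive direction — actually building a width-$2$ monotone connected expansion of all of $G$ out of the per-block expansions, and verifying that the absence of central cut-vertices is exactly the condition that lets the gluing stay within cost $2$. The delicate point in that construction is orienting each block correctly (which side is $S^{\rm in}$, which is $S^{\rm out}$) so that the ``incoming'' clean edge set of one block matches the ``outgoing'' set of the previous one, which is where the ``sense of direction'' emphasised in the introduction, together with Lemma~\ref{bolek}, does the work.
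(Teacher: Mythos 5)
Your treatment of parts 1 and 3 follows the paper's route (three simultaneous $\mathcal{A}$-type rooted obstructions at one vertex yield a member of $\mathcal{O}_{10}$; absence of central cut-vertices lets one glue per-block expansions into a cost-$2$ expansion of all of $G$), although for part 3 you omit the step that carries all the weight: Lemma~\ref{glue} only concatenates a \emph{linear} chain of rooted graphs, and the paper makes the block--cut tree linearizable by a maximality argument (choose the cut-vertex $x$ whose unique non-fan side $H_x$ is minimal), which shows that $G$ is in fact a single block $B$ with only fans attached at its cut-vertices. Your ``at most one hard branching per vertex'' is the hypothesis, not this structural conclusion, and without it the splicing does not obviously stay at cost $2$.

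The genuine errors are in parts 2 and 4. For part 2 you conclude ``spine-degree $\geq 3$ at some $c_i$,'' but this does not follow: since $B\setminus c_i$ is connected, everything reachable through $B$ --- including the other two central cut-vertices and all their obstructions --- lies in a \emph{single} component of $G\setminus c_i$, so it contributes at most one non-fan to the spine-degree of $c_i$; together with the one guaranteed non-fan avoiding $B$ this gives spine-degree $2$, which is perfectly consistent with part 1. The paper avoids this by contracting $B$ to a triangle on $\{c_1,c_2,c_3\}$ and attaching to each corner the non-fan member of $\mathcal{C}_G(c_i)$ that avoids $B$, landing in $\mathcal{O}_5$ rather than $\mathcal{O}_{10}$ (one could instead contract $B$ to a single vertex and reach $\mathcal{O}_{10}$, but that is not what you wrote). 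For part 4, ``a central cut-vertex lies in exactly two central blocks'' is false --- the two extremal central cut-vertices lie in at most one central block, and when $r=0$ the unique central cut-vertex lies in none --- and a finite acyclic graph in which \emph{every} node has degree $2$ does not exist, so your concluding sentence cannot be read literally. More importantly, you never establish that the central blocks and central cut-vertices induce a \emph{connected} subtree of the block--cut tree, i.e.\ that consecutive central cut-vertices actually share a block; this is the content of the paper's claim that among any three central cut-vertices one separates the other two (proved via parts 1 and 2), and it is not automatic.
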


\begin{proof} 1. Let $v$ be a vertex of $G$ with spine-degree at least 3. That means that there exist 
at least 
three subgraphs of $G$, doubly rooted on $v$, that can be contracted to some graph in 
$\cal A$, therefore 
$G$ can be contracted to a graph in ${\cal O}_{10}$, a contradiction.

2. Suppose that $B$ is a block of $G$ containing 3 (or more) central cut-vertices, say
$c_1,c_2$ and $c_3$. 
Construct the graph $H$ by contracting all edges of $B$ to a triangle $T$ with 
$\{c_{1},c_{2},c_{3}\}$ as vertex set.
As $c_{i}$ is a central vertex, there is a rooted graph $R_{i}$ in ${\cal C}_{G}(c_i)$ that contains 
some of the 
graphs in ${\cal A}$ as a rooted contraction. Next we apply the same contractions to $H$ f
or every $c_{i}, i\in\{1,2,3\}$ 
and then contract to vertices all blocks of $H$ different than $T$ and not contained in some $R_{i}$. 
It is easy to 
see that the resulting graph is a graph in ${\cal O}_5$, a contradiction.

3. Assume that $G$ has no central cut-vertices.  We distinguish two cases.

\noindent{\em Case 1}. There is a cut-vertex 
of $G$, say $v$, such that all rooted graphs in ${\cal C}_{G}^{(v)}$ are fans and let 
${\bf G}_{1},\ldots {\bf G}_{r}$
 be these rooted graphs. From  Lemmata~\ref{glue} and~\ref{seasrchfun}, we conclude
 that $\cmp(G,\{v\},\{v\})=\cmp({\bf glue}({\bf G}_{1},\ldots {\bf G}_{r}))\leq 2$
 and from Lemma~\ref{emptymake},  $\cmp(G)\leq 2$ contradicting to the first condition of 
 Lemma~\ref{goinaldirs}. 
 
\noindent{\em Case 2}. For every cut-vertex $v$ of $G$, at least one of the rooted graphs in 
${\cal C}_{G}^{(v)}$ is not a fan. 
We denote by $H_v$ the corresponding non-fan 
rooted graph in  ${\cal C}_{G}^{(v)}$  (this is unique due to the fact that $v$ is non-central). 
Among all cut vertices, 
let $x$ be one for which 
the set $V(G)\setminus V(H_{x})$ is maximal.
Let $B$ be the block of $H_{x}$ that contains $x$ and let $S$ be the set of the cut-vertices 
of $G$ that belong to $B$ (including $x$).

For every $y\in S$ we denote by
${\cal W}_{y}=\{{\bf W}_{y}^{1},\ldots,{\bf W}_{y}^{r_{y}}\}$ the set 
of all rooted graphs in ${\cal C}_{G}^{(y)}$, 
except from the one, call it ${\bf R}_{y}$,  that contains $B$. 
We also define 
${\bf W}_{y}={\bf glue}({\bf W}_{y}^{1},\ldots,{\bf W}_{y}^{r_{y}})$.
We claim that all ${\bf W}_{y}, y\in S$ are fans. Indeed, if for some $y$, ${\bf W}_{y}$
is a non-fan, because $y$ is not central, ${\bf R}_{y}$ should be 
a fan, contradicting the choice of $x$.

According to the above, the edges of $G$ can be partitioned to those of 
the rooted graph ${\bf G}_{B}$ and the edges in the rooted graphs ${\bf W}_{y},\ y\in S$.
Let also ${\bf  G}_{B}=(B,X,Y)$ and we assume that, if $Y=\{y_{1},y_{2}\}$, then $y_{1}$ is light.

 Notice that, according to Lemma~\ref{seasrchfun} $\cmp({\bf W}_{y})\leq2,\ y\in S$ and 
 according to Lemma~\ref{bolek}
 $\cmp({\bf G}_B)\leq 2$. 
 We distinguish two cases.
\smallskip

 \noindent{\sl Case 2.1.} $Y=\{y_{1},y_{2}\}$ where $y_{2}$ is light. Then let  
 ${\bf G}_{1}=(G[\{y_1,y_2\}], \{y_1,y_2\},$ $ \{y_2\})$ and 
 ${\bf G}_{2}=(G[\{y_1,y_2\}], \{y_2\}, \{y_1\})$. Clearly  
 $\cmp({\bf G}_1)=2$ and  $\cmp({\bf G}_2)=1$. Therefore, if 
 ${\bf G}'= {\bf glue}({\bf G}_B, {\bf G}_1, {\bf W}_{y_2}, {\bf G}_2, 
 {\bf W}_{y_1})$, then, from Lemma~\ref{glue},
  $\cmp({\bf G}')\leq 2$.
 
 \noindent{\sl Case 2.2.} $Y=\{y_{1}\}$. Let ${\bf G}'= {\bf glue}({\bf G}_B, {\bf W}_{y_1})$, 
 then, from Lemma~\ref{glue},
  $\cmp({\bf G}')$ $\leq 2$.
\smallskip

In both cases, if $X=\{x\}$, then we set ${\bf G}={\bf glue}({\bf W}_{x},{\bf G}')$ while if 
$X=\emptyset$ we set ${\bf G}={\bf G}'$. In any case, we observe that, from Lemma~\ref{glue},  
$\cmp({\bf G})\leq 2$. 
From Lemma~\ref{emptymake},  $\cmp(G)\leq 2$ contradicting to the first condition of 
Lemma~\ref{goinaldirs}. 

As in both cases we reach a contradiction  $G$ must contain at least one central cut-vertex. 

4.  Let $C$ be the set of all central cut-vertices of $G$.
For each $c\in C$, let ${\cal X}_c$ be the subset of ${\cal C}^{(v)}_{G}$
that contains all its members that are not fans. Clearly, ${\cal X}_c$
contains exactly two elements. Notice that none of the vertices in $C\setminus \{c\}$ 
belongs in the double rooted graphs in ${\cal C}^{(v)}_{G}\setminus {\cal X}_{c}$. Indeed, 
if this is the case for some vertex $y\in C\setminus \{c\}$, then the member of ${\cal C}^{(y)}_{G}$ 
that avoids $c$ would be a subgraph of some member of 
${\cal C}^{(v)}_{G}\setminus {\cal X}_{c}$ and this would imply that some fan would contain as 
a contraction some double rooted graph that is not a fan. We conclude that for each $c\in C$
there is a partition $p(c)=(A_{c},B_{c})$ of  $C\setminus \{c\}$ 
such that that all members of ${A}_{c}$ are vertices of one of the 
members of ${\cal X}_c$ and all members of $B_c$ are vertices of the other.

We say that a vertex $c\in C$ is {\em extremal} 
if $p(c)=\{\emptyset,C\setminus \{c\}\}$

We claim that for any three vertices $\{x,y,z\}$ of $C$, there is one, say $y$ of them such that  
$x$ and $z$ belong 
in different sets of $p(y)$. Indeed, if this is not the case, 
then one of the following would happen: either there is a vertex $w\in C$ such that $x,y,z$ 
belong to different elements of ${\cal C}_{G}^{(w)}$, a contradiction
to the first statement of this lemma or $x,y,$ and $z$ belong to the same block of $G$, a 
contradiction to the second statement of this lemma. 

By the above claim, there is a path $P$ 
containing all central cut-vertices in $C$ and we assume that this path is of minimum length 
which permits us to assume that its endpoints are extremal vertices of $C$. Moreover, 
heavy cut-vertices 
in $V(P)$ are members of $C$. Let $c_{1},\ldots,c_{r+1}$ be the 
central cut-vertices ordered as they appear in $P$.
As, for every $i\in\{1,\ldots,r\}$ there is a block $B_{i}$ 
containing the  central cut-vertices $c_{i}$ and $c_{i+1}$ we end up with the two orderings 
required in the forth statement of the lemma.
\end{proof}

Let $G$ be a graph in   ${\cal Q}$.
Suppose also that $c_1,\ldots,c_{r+1}$ and $B_{1},B_{2},\ldots,B_{r}$ are as in 
Lemma~\ref{lem:atmost2nonfuns}.4.
We define the {\em extremal blocks} of $G$ as follows:
\begin{itemize}
\item If $r>0$, then among all blocks that contain $c_{1}$ as a cut-vertex let $B_{0}$
be the one such that $C_{G}(c_{1},B_{0})$, doubly rooted at $c_{1}$, is not a fan, 
does not contain any edge of the central blocks of $G$ and does not contain $c_{r+1}$.
Symmetrically, among all blocks that contain $c_{r+1}$ as a cut-vertex let $B_{r+1}$
be the one such that $C_{G}(c_{r+1},B_{r+1})$ doubly rooted at $c_{r+1}$ is not a fan, 
does not contain any edge of the central blocks of $G$ and does not contain $c_{1}$. 
\item If $r=0$, then let $B_{0}$ and $B_{1}$ be the two blocks with the property that for $i\in\{0,1\}$, 
 $C_{G}(c_{1},B_{i})$, doubly rooted at $c_{1}$,  is not a fan.
\end{itemize}

 We call $B_{0}$ and $B_{r+1}$ {\em left} and {\em right} extremal block of $G$ respectively.
 We also call the blocks of $G$ that are either central or extremal  {\em spine} blocks of $G$.
Let $A(G)$ be the set of cut-vertices of the graphs  $B_{0},B_{1},B_{2},\ldots,B_{r},B_{r+1}$.
We partition $A(G)$ into three sets $A_{1}$, $A_{2}$ and  $A_{3}$ as follows:

\begin{itemize}
\item $A_{1}=\{c_{1},\ldots,c_{r+1}\}$ (i.e. all central vertices).
\item $A_{2}$ contains all vertices of $A(G)$ that belong to central blocks and are not central vertices.
\item $A_{3}$ contains all vertices of $A(G)$ that belong to extremal blocks and are not central 
cut-vertices.
\end{itemize}
Moreover, we further partition $A_{3}$ to two sets $A_{3}^{(0)}=A_{3}\cap V(B_{0})$ and 
$A_{3}^{(r+1)}=A_{3}\cap V(B_{r+1})$.

\begin{figure}[h]
\begin{center}
 \scalebox{0.85}{\includegraphics{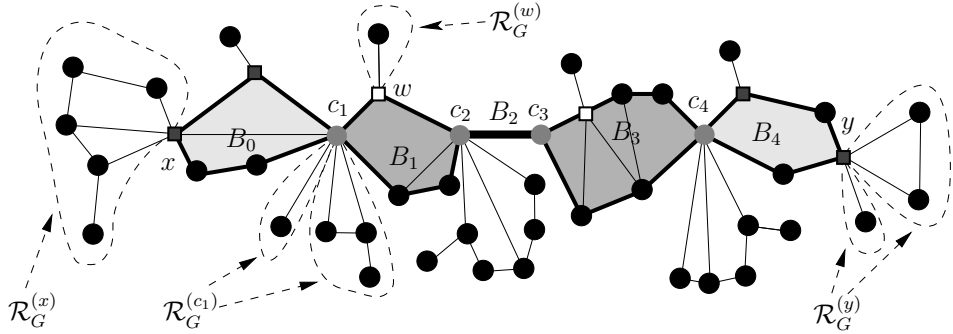}}
 \end{center}
\caption{A graph $G$ and the blocks $B_{0},B_{1},\ldots,B_{3},$ and $B_{4}$.
The cut-vertices in $A_{1}=\{c_{1},\ldots,c_{4}\}$ are the grey circular vertices, the vertices in 
$A_{2}$ are the white square vertices and  the vertices in $A_{3}$
are the dark square vertices.}
\label{FanRoossstObs}
\end{figure}

Let $G\in {\cal Q}$ and $v\in A(G)$. We denote by ${\cal R}_{G}^{(v)}$ the set of  the doubly 
rooted graphs in
${\cal C}_{G}^{(v)}$ that do not contain any of the  edges of the spine blocks of $G$.

\begin{lemma}
\label{lem:interfdans} 
Let $G\in {\cal Q}$ and $v\in A(G)$. 
The following hold:
\begin{itemize}
\item[a)]  Each  doubly rooted graph in ${\cal R}_{G}^{(v)}$ is a fan.
\item[b)] All vertices in $v\in A_{2}$ are light, i.e.,   for each $v\in A_{2}$ 
  ${\cal R}_{G}^{(v)}$ contains 
exactly  one graph that is a hair block of $G$.  
\end{itemize}
\end{lemma}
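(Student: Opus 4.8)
For part (a) I plan to work directly with the structural description of fans read off the proof of Lemma~\ref{seasrchfun}: since $G$ is outerplanar (Lemma~\ref{basic_structure}.1), a doubly rooted graph $(H,\{v\},\{v\})$ with $H\subseteq G$ is a fan iff $H\setminus v$ is a disjoint union of paths each having an endpoint adjacent to $v$; equivalently, it is \emph{not} a fan iff $H\setminus v$ has a component that is not a path, or a path component both of whose endpoints are non-adjacent to $v$. The key auxiliary claim I would establish is: if $C$ is a component of $G\setminus v$ whose vertex set contains a \emph{central} cut-vertex $c\neq v$, then $(G[V(C)\cup\{v\}],\{v\},\{v\})$ is not a fan. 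For this, $c$ has spine-degree $2$, so two of its branches in ${\cal C}_G^{(c)}$ are non-fans; at most one of them contains $v$, so one of them, with component $C'$ of $G\setminus c$, avoids $v$, and since $c\in C$ and $C'$ reaches $c$ without passing through $v$ we get $C'\subseteq C$. Unfolding ``non-fan'' for this branch and using that $c$ itself lies in $C$ yields in each case a vertex of $C$ of degree at least $3$ in $G[C]$, so $G[C]$ is not a disjoint union of paths, proving the claim.

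With the claim in hand, take $v\in A(G)$. By the block--cut-tree of $G$ and the orderings of Lemma~\ref{lem:atmost2nonfuns}.4, $v$ is a cut-vertex of exactly two spine blocks if $v\in A_1$ and of exactly one spine block if $v\in A_2\cup A_3$, and the branch of $G\setminus v$ carrying the edges of such a spine block is a non-fan --- for a central spine block by the auxiliary claim (it contains a central cut-vertex different from $v$), and for the extremal blocks $B_0$, $B_{r+1}$ by their definition. If $v\in A_1$ it is central, so by Lemma~\ref{lem:atmost2nonfuns}.1 its spine-degree is exactly $2$, whence the two non-fan branches of $v$ are exactly the two spine-block branches, and every member of ${\cal R}_G^{(v)}$ is one of the remaining (non-spine) branches and is therefore a fan. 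If $v\in A_2\cup A_3$ it is non-central, so its spine-degree is at most $1$; the unique spine-block branch being already a non-fan, it is the only one, and again every member of ${\cal R}_G^{(v)}$ is a fan. (Outerplanarity of all these branches is inherited from Lemma~\ref{basic_structure}.1.)

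For part (b), fix $v\in A_2$ and let $B=B_i$ be the (unique) central block containing it. By the block--cut-tree, $B$ is the only spine block through $v$, it is not a hair block, and every spine block of $G$ has its edges inside the branch of $v$ that contains $B$; hence ${\cal R}_G^{(v)}$ is precisely the set of branches of $v$ other than that one, and by part (a) each of its members is a fan of the form $(G[V(C)\cup\{v\}],\{v\},\{v\})$ with $C$ connected, hence (a connected disjoint union of paths) a single path hanging off $v$ --- so a hair block of $G$ at $v$ is exactly one for which $C$ is a single vertex. It remains to show $v$ has exactly one non-spine branch and that it is a single pendant edge. Since $v$ is a cut-vertex, it has at least one non-spine branch. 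Next I would contract $B$ onto the triangle on $\{c_i,v,c_{i+1}\}$ (as in the proof of Lemma~\ref{basic_structure}.6) and contract the non-fan branch of $c_i$, resp.\ of $c_{i+1}$, that does not meet $B$ onto a gadget leaving at $c_i$, resp.\ $c_{i+1}$, a non-trivial block or two hair blocks (always possible for a non-fan branch): if $v$ had two or more non-spine branches, or a non-spine branch in which $v$ has two neighbours (so it contracts to a non-trivial block at $v$), the resulting graph would be a member of ${\cal O}_5$, contradicting Lemma~\ref{goinaldirs}.iii. Hence $v$ has a single non-spine branch $R$, a pendant path. Finally, if $R$ had length at least $1$, I would instead contract the two non-fan branches of $c_i$ and $c_{i+1}$ not meeting $B$ onto members of the set ${\cal A}$ of Figure~\ref{FanRootObs} (possible since they are non-fans) and contract $R$ onto a path of length at most $2$; the resulting graph would belong to ${\cal O}_{11}\cup{\cal O}_{12}$, again contradicting Lemma~\ref{goinaldirs}.iii. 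Therefore $R$ is a single pendant edge, i.e.\ the unique hair block of $G$ at $v$, so $v$ is light, as claimed.

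The step I expect to be the real obstacle is the last one: checking that the graph obtained from the triangle $c_i v c_{i+1}$, the two ${\cal A}$-gadgets on $c_i$ and $c_{i+1}$, and the retained pendant path $R$ at $v$ is genuinely (isomorphic to) one of the ``direction'' obstructions in ${\cal O}_{11}$ or ${\cal O}_{12}$. This is precisely where the sense of direction of a connected monotone search matters --- a true pendant path at $v$ pins a search direction that clashes with the two-sided central chain, whereas a bare hair block does not --- so the argument must track how the haploid faces of $B$ and of the neighbouring blocks decide which gadget of ${\cal A}$ appears on which side. The auxiliary claim in part (a) and the ${\cal O}_5$-reduction in part (b) should be routine once the fan/path dictionary is set up.
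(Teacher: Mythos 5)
Your part (a) and the ${\cal O}_5$ reduction in part (b) are correct and follow the paper's own route: the paper's part (a) is just a terser version of your spine-degree count, leaving implicit the fact, which you prove via your auxiliary claim, that the spine-carrying branches are non-fans. The genuine gap is your final step. The leftover configuration --- the central block with its two non-fan gadgets at $c_i$ and $c_{i+1}$ and a single pendant path $R$ of length at least $2$ at $v$ --- does \emph{not} contract to a member of ${\cal O}_{11}\cup{\cal O}_{12}$, and in fact contains no member of ${\cal D}^{1}$ as a contraction at all, so no forbidden-contraction argument can close this case. Concretely: contract $B_i$ to the triangle $c_ivc_{i+1}$, attach a triangle at $c_i$ and at $c_{i+1}$, and attach the path $v\,a\,b$. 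This graph is cleaned by two searchers (clear the $c_i$-triangle ending with one searcher on $c_i$; place on $v$; slide $c_i\to c_{i+1}$, which cleans $c_ic_{i+1}$ and then $vc_{i+1}$ with no recontamination because $v$ and $c_{i+1}$ stay guarded and every edge at the abandoned $c_i$ is already clean; slide $v\to a$, then $a\to b$, retract from $b$; finish the $c_{i+1}$-triangle), so it has $\cms=2$ and, by Lemmata~\ref{sub_dir} and~\ref{lem:clos}, lies above no obstruction. Structurally the failure is also visible: every graph in ${\cal O}_{11}\cup{\cal O}_{12}$ is the identification at a single vertex of two rooted graphs each of ${\bf cmp}$ equal to $3$, whereas the piece of your graph on the $R$-side of $v$ is a bare path of ${\bf cmp}$ equal to $1$.

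What excludes the long pendant path is not a contraction obstruction but minimality, and the paper has already packaged this as Lemma~\ref{basic_structure}.2: an internal vertex of a pendant path of length at least $2$ is a light cut-vertex of degree $2$, which cannot occur in a graph of ${\cal Q}$ because contracting its hair block preserves ${\bf cmp}$, violating Lemma~\ref{goinaldirs}.ii. With that in hand, the paper's entire proof of (b) is exactly your ${\cal O}_5$ step: if $v\in A_2$ were not light, then ${\cal R}^{(v)}_{G}$ would contract either to two pendant edges sharing only $v$ or to a triangle through $v$, and together with the non-fan branches hanging off the two central cut-vertices of $B_i$ this produces a graph of ${\cal O}_5$, contradicting Lemma~\ref{goinaldirs}.iii. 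So replace your last paragraph by an appeal to Lemma~\ref{basic_structure}.2 (and fix the slip ``length at least $1$'', which should read ``at least $2$''), and the argument is complete.
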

\begin{proof}
a) Let $v\in A(G)$. We distinguish two cases:\\

\noindent{\em Case 1:} $v\in A_1$. If there exist a double rooted graph in 
${\cal R}_{G}^{(v)}$ that is not a fan, $v$ will have
spine-degree greater than 3, a contradiction to the first property of
Lemma~\ref{lem:atmost2nonfuns}.\\

\noindent{\em Case 2:} $v\in A_2\cup A_3$. If there exist a double rooted graph in 
${\cal R}_{G}^{(v)}$ that is not a fan, $v$ will 
have spine-degree greater than 2, therefore $v$ must be central, a contradiction.\\

b)  Let $v\in A_2$, and suppose that ${\cal R}_{G}^{(v)}$ can be contracted to two edges with 
$v$ as their unique common endpoint, 
or to a triangle. As $v$ belongs to a central block, $G$ can be contracted to a graph in 
${\cal O}_{5}$, a contradiction to the third property 
of Lemma~\ref{goinaldirs}.
\end{proof}

\subsection{Directional obstructions}

Let $G\in {\cal Q}$ and let $B_{0},B_{1},\ldots,B_{r},B_{r+1}$ be the spine blocks of $G$.
Notice first that from Lemma~\ref{basic_structure}.6 for every $i\in \{1,\ldots, r\}$, 
$|A_{2}\cap V(B_{i})|\leq 1$. Also, from Lemma~\ref{basic_structure}.6, 
if $A_{2}\cap V(B_{i})=\{v\}$, then $v$ is a light cut-vertex. 
For $i\in\{1,\ldots,r\}$, we define the rooted graphs ${\bf B}_{i}^{*}$ as follows: if 
$A_{2}\cap V(B_{i})=\{v\}$, then ${B}^{*}_{i}$ is the union of $B_{i}$ and 
the underlying graph of the unique rooted graph in ${\cal R}_{G}^{(v)}$ 
(this rooted graph is unique and its underlying graph is a hair block of $G$ from the second 
statement of Lemma~\ref{lem:interfdans}). If $A_{2}\cap V(B_{i})=\emptyset$, then 
${B}_i^{*}$ is $B_i$.
We finally define the rooted graph ${\bf B}^{*}_{i}=(B_{i}^{*},\{c_{i}\},\{c_{i+1}\})$ for $i\in\{1,\ldots,r\}$.

\begin{figure}[h]
\begin{center}
 \scalebox{0.85}{\includegraphics{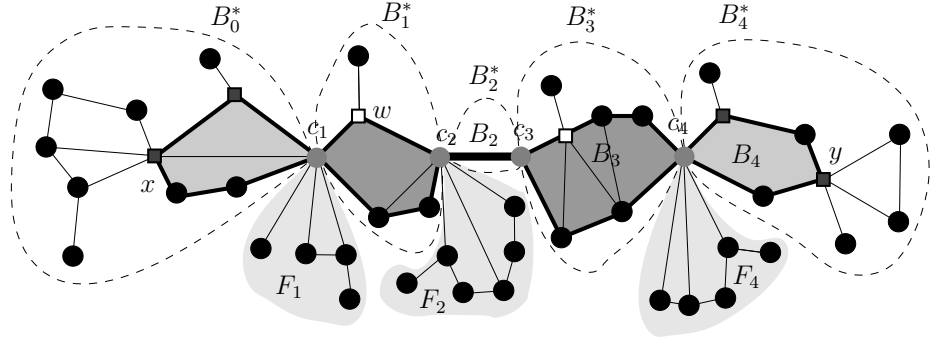}}\end{center}
\caption{A graph $G$, the  extended extremal blocks $B_{0}^*$ and $B_{4}^{*}$ the 
extended central  blocks $B_{1}^*,B_{2}^*,$ and $B_{3}^*$ and 
the rooted graphs $F_{1},F_{2}$ and  $F_{4}$ ($F_{3}$ is the graph consisting only of the 
vertex $c_{3}$, doubly rooted on $c_{3}$).}
\label{FanRoossstObs}
\end{figure}

We also define ${B}_{0}^{*}$ as follows: consider the unique graph $B_0$ in 
${\cal C}_{G}^{(c_1)}$ that, when doubly rooted on $c_{1}$, is not a 
fan, does not contain any edge of the central blocks of $G$ {and does not contain $c_{r+1}$}. 
Then ${\bf B}_{0}^{*}=(B_{0},\emptyset,\{c_{1}\})$. Analogously, we define  
$B_{r+1}^{*}$ by considering the graph $B_{r+1}$ of  the unique rooted graph  in 
${\cal R}_{G}^{(c_{r+1})}$ 
that, when doubly rooted on $c_{r+1}$, is not a fan, does not contain any edge of the 
central blocks of $G$ {and does not contain $c_{1}$}.
Then  ${\bf B}_{r+1}^{*}=(B_{r+1},\{c_{r+1}\},\emptyset)$.
Finally, we define for each $i\in\{1,\ldots,r+1\}$
the graph $F_{i}$ that is the union of all the graphs of the rooted graphs in 
${\cal R}_{G}^{(c_{i})}$ that are fans
(when performing the union, the vertex $c_{i}$ stays the same), and in the case where 
${\cal R}_{G}^{(c_{i})}$ is empty, then $F_{i}$ is the trivial graph $(\{c_i\},\emptyset)$.
We set ${\bf F}_{i}=(F_{i},\{c_i\},\{c_{i}\})$ $i\in\{1,\ldots,r+1\}$ and we  call the rooted graphs  
${\bf F}_{1},\ldots,{\bf F}_{r+1}$ {\em extended fans} of $G$.
We call ${\bf B}_{0}^{*},{\bf B}_{1}^{*},\ldots,{\bf B}_{r}^{*},{\bf B}^{*}_{r+1}$ the 
{\em extended  blocks} of the graph $G\in{\cal Q}$
and we naturally distinguish them in {\em central} and {\em extremal} ({\em left} or 
{\em right}), depending of type of the blocks that contain them.
Notice that 
$$\{E({ B}_{0}^{*}),E({ F}_{1}),E({ B}_{1}^{*}),E({ F}_2),\ldots,E({ F}_{r}),E({ B}_{r}^{*}),
E({ F}_{r+1}),E({ B}_{r+1}^{*})\}$$ is a partition of the edges of $G$.
 
\begin{figure}[h]
\begin{center}
\scalebox{0.6}{\includegraphics{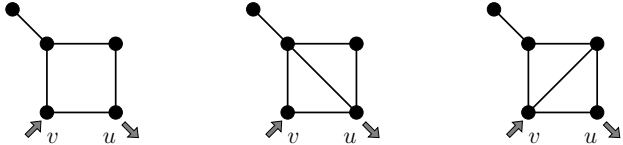}}
\end{center} 
\caption{The set of rooted graphs ${\cal L}$ containing three rooted graphs each of the form 
$(G,\{v\},\{u\})$.} 
\label{morein32sssminorc} 
\end{figure}

\begin{lemma}
\label{cnrptal}
Let $G\in {\cal Q}$ and let ${\bf B}_{1}^{*},\ldots,{\bf B}_{r}^{*}$ be the central  blocks of $G$. 
None of the rooted graphs in the set ${\cal L}$ of Figure~\ref{morein32sssminorc}
is a contraction of ${\bf B}_{i}^*$ if and only if  $\cmp({\bf B}_{i}^*)\leq 2$.
\end{lemma}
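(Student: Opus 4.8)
The plan is to establish the two implications of the equivalence separately: the ``only if'' direction is a short finiteness argument, while the ``if'' direction is a genuine construction of an expansion.

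For the ``only if'' direction I would argue by contraposition, showing that whenever some rooted graph $L\in{\cal L}$ is a contraction of ${\bf B}_{i}^{*}$ we have $\cmp({\bf B}_{i}^{*})\geq 3$. Since ${\cal L}$ is a fixed finite set of rooted graphs (each of the form $(G,\{v\},\{u\})$), one first checks by inspection -- exactly the kind of finite verification used in the proof of Lemma~\ref{sub_dir} -- that $\cmp(L)\geq 3$ for every $L\in{\cal L}$. Then, applying Lemma~\ref{lem:clos} to the contraction $L\preceq {\bf B}_{i}^{*}$ gives $\cmp({\bf B}_{i}^{*})\geq\cmp(L)\geq 3$, as required.

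For the ``if'' direction I would assume that no member of ${\cal L}$ is a contraction of ${\bf B}_{i}^{*}=(B_{i}^{*},\{c_{i}\},\{c_{i+1}\})$ and build a monotone and connected $(E^{\rm in},E^{\rm out})$-expansion of $\rep({\bf B}_{i}^{*})$ of cost at most $2$, where $E^{\rm in},E^{\rm out}$ are the edge extensions attached at $c_{i}$ and $c_{i+1}$; such an expansion witnesses $\cmp({\bf B}_{i}^{*})\leq 2$ by definition. The structural input comes from Lemma~\ref{basic_structure}: $B_{i}$ is outerplanar, $c_{i}$ and $c_{i+1}$ are haploid cut-vertices, and, when $B_{i}$ is an essential block, it has exactly two haploid faces $F_{1},F_{2}$; moreover, as noted just before this lemma, $B_{i}^{*}$ differs from $B_{i}$ only by at most one pendant (hair-block) edge attached at a light cut-vertex $v\in A_{2}\cap V(B_{i})$ (second statement of Lemma~\ref{lem:interfdans}). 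I would split into the three block types of the Observation. If $B_{i}$ is a trivial (bridge) block or a cycle block, the construction is immediate: the bridge itself, or a traversal of the unique Hamiltonian cycle of $B_{i}$ from $c_{i}$ to $c_{i+1}$, gives an edge ordering along which the boundary never exceeds two vertices and the pendant surcharge can only arise when the boundary has size at most one. The substantial case is the essential block, which I would handle by a source-to-sink variant of the sweeping algorithm in the proof of Lemma~\ref{bolek}: using the two haploid faces to impose a \emph{sense of direction}, I would (after possibly interchanging $F_{1}$ and $F_{2}$) arrange the sweep to start at $c_{i}$ on the boundary of one haploid face and to finish at $c_{i+1}$ on the boundary of the other, expanding the cleaned region one edge at a time along the Hamiltonian cycle and its chords, and, if $B_{i}$ carries a hair block at $v$, inserting that single pendant edge at the step in which $v$ first enters the cleaned region, where the boundary has size at most one.

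The hard part is to certify that this expansion really has cost at most $2$, i.e.\ that at every intermediate step the boundary of the cleaned region has size at most two (the extra $q$-term occurring only when this size is at most one). This is precisely where the hypothesis that no member of ${\cal L}$ is a contraction of ${\bf B}_{i}^{*}$ enters: if at some step the boundary were forced up to three vertices, I would contract the part already cleaned (towards $c_{i}$ and the in-extension), the part not yet cleaned (towards $c_{i+1}$ and the out-extension), and the at most one side branch hanging off the current frontier, and read off one of the three rooted graphs of ${\cal L}$ as a contraction of ${\bf B}_{i}^{*}$ -- a contradiction. The delicate bookkeeping, and the main obstacle, is to choose the assignment of $c_{i},c_{i+1}$ to the haploid faces and the tie-breaking in the sweep so that each of the three forbidden configurations in ${\cal L}$ corresponds exactly to one way in which the frontier could blow up; the restrictions on the structure of $B_{i}$ supplied by Lemmata~\ref{basic_structure} and~\ref{lem:interfdans} are what make this case analysis finite and exhaustive.
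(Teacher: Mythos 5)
Your ``only if'' direction coincides with the paper's: each $H\in{\cal L}$ satisfies $\cmp(H,\{v\},\{u\})=3$ by a finite check, and Lemma~\ref{lem:clos} transfers this to ${\bf B}_{i}^{*}$. The gap is in the ``if'' direction, at the one place where the hypothesis actually does work. If $B_{i}$ has no light cut-vertex there is nothing left to prove: ${\bf B}_{i}^{*}={\bf G}_{B_{i}}=(B_{i},\{c_{i}\},\{c_{i+1}\})$ and Lemma~\ref{bolek} already gives cost at most $2$ \emph{unconditionally} --- you do not need to re-run the sweep, nor the ${\cal L}$-hypothesis, to keep the frontier at size $2$ inside the block itself. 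The hypothesis is needed only when $B_{i}$ carries a hair block at a light cut-vertex $c$ (your $v$), and what it buys is not ``the frontier never reaches $3$'' but the \emph{position} of $c$: if no graph of ${\cal L}$ is a contraction of ${\bf B}_{i}^{*}$, then $c$ cannot be joined to the in-root $c_{i}$ by an outer edge, so by Lemma~\ref{basic_structure}.7 it is joined to $c_{i+1}$ by a haploid edge. The paper then sets ${\bf G}_{B_{i}}=(B_{i},\{c_{i}\},\{c_{i+1},c\})$, applies Lemma~\ref{bolek}, and attaches the hair at the very end via Lemma~\ref{glue}, after the frontier has already shrunk.

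Your corresponding step --- ``inserting that single pendant edge at the step in which $v$ first enters the cleaned region, where the boundary has size at most one'' --- is asserted, not proved, and is false in general: in the sweep of Lemma~\ref{bolek} the frontier has size $2$ at the moment a typical vertex first enters the cleaned region, so the pendant surcharge $q=1$ would push the cost to $3$. This failure mode (size-$2$ frontier plus a pendant edge) is distinct from the size-$3$ frontier blow-up that your contraction argument is designed to exclude, so your stated use of the hypothesis does not cover it. To close the gap you must either postpone the hair to the end of the sweep --- which keeps $c$ on the frontier throughout and creates a size-$3$ frontier precisely when $c$ is not adjacent to $c_{i+1}$, i.e.\ precisely in the configurations encoded by ${\cal L}$ --- or, as the paper does, use the hypothesis up front to locate $c$ next to $c_{i+1}$ and then assemble the expansion with Lemmata~\ref{bolek} and~\ref{glue}. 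The link between the ${\cal L}$-hypothesis and the placement of the hair block is the actual content of this lemma, and it is missing from your write-up.
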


\begin{proof}  
Clearly, for every graph $H\in\cal L$, $\cmp(H,\{v\},\{u\})=3$, therefore if ${\bf B}_{i}^*$ can 
be contracted to a graph in $\cal L$, according to Lemma~\ref{lem:clos},  $\cmp({\bf B}_{i}^*)\geq 3$.

Let now ${\bf B}_{i}^*$ be an central extended block of $G$ that cannot be contracted to a graph in 
$\cal L$. 
If $B_{i}$ does not contain some light cut-vertex, we define 
${\bf G}_{B_{i}}=(B_{i},\{c_i\},\{c_{i+1}\})$. Notice that ${\bf B}_{i}^{*}={\bf G}_{B_{i}}$ and, as
from Lemma~\ref{bolek} $\cmp({\bf G}_{B_{i}^{}})\leq 2$, we are done.

In the remaining case, where $B_i$ contains a light cut-vertex, say $c$, observe that $c$  
cannot be adjacent via an outer edge to $c_i$, or else $B_{i}^{*}$ could be contracted to a graph in 
$\cal L$. Therefore, according to  Lemma~\ref{basic_structure}.7, $c$ is connected via an 
haploid edge with $c_{i+1}$. Notice that ${\bf G}_{B_i}=(B_i,\{c_i\},\{c_{i+1},c\})$.
According to Lemma~\ref{bolek}, $\cmp({\bf G}_{B_{i}})\leq 2$ and, according to 
Lemma~\ref{lem:interfdans}, ${\cal R}^{(c)}$ contains only a hair block, 
say $(H,\{c\},\{c\})$. Clearly $\cmp(H,\{c\},\{c\})=2$. Let 
${\bf G}_1=(G[\{c,c_{i+1}\}], \{c,c_{i+1}\}, \{c\})$, ${\bf G}_2=
(G[\{c, c_{i+1}\}], \{c\},\{c_{i+1}\}),$ and
${\bf G}={\bf glue}({\bf G}_{B_{i}}, {\bf G}_1, (H, \{c\},\{c\})$, ${\bf G}_2)$. From Lemma~\ref{glue}, 
$\cmp({\bf G})\leq 2$ and the lemma follows as ${\bf G}={\bf B}_{i}^{*}$.
\end{proof}

\begin{lemma}
\label{cnrptal2}
If ${\bf B}_{i}^{*}$ is one of the central extended  blocks of a graph $G\in{\cal Q}$, then either 
$\cmp({\bf B}_{i}^{*})\leq 2$ or $\cmp({\bf rev}({\bf B}_{i}^{*}))\leq 2$.
\end{lemma}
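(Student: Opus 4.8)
The plan is to reduce the statement to Lemma~\ref{cnrptal} by showing that if \emph{neither} ${\bf B}_i^*$ \emph{nor} $\rrev({\bf B}_i^*)$ has $\cmp\leq 2$, then both of them can be contracted to a rooted graph in the set ${\cal L}$ of Figure~\ref{morein32sssminorc}, and then arguing that this forces $G$ to contain one of the graphs of ${\cal D}^1$ as a contraction, contradicting Lemma~\ref{goinaldirs}.iii. First I would invoke Lemma~\ref{cnrptal} in both directions: since $\cmp({\bf B}_i^*)\geq 3$, the block ${\bf B}_i^*$ contracts (as a rooted graph $(B_i^*,\{c_i\},\{c_{i+1}\})$) to some ${\bf L}\in{\cal L}$, and since $\cmp(\rrev({\bf B}_i^*))\geq 3$, the block $\rrev({\bf B}_i^*)=(B_i^*,\{c_{i+1}\},\{c_i\})$ contracts to some ${\bf L}'\in{\cal L}$. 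The key point is that the witnessing contraction maps $\phi,\phi'$ both operate on the \emph{same} underlying graph $B_i^*$, with the two roots $c_i$ and $c_{i+1}$ simply swapping the roles of in-root and out-root.

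The main work is a structural case analysis of what the three members of ${\cal L}$ look like and how their preimages in $B_i^*$ can overlap. Recall $B_i$ is $2$-connected outerplanar (or a bridge/trivial block), so it has a unique Hamiltonian cycle, and $B_i^*$ is $B_i$ together with at most one hair block attached at a light cut-vertex $c$ which (by Lemma~\ref{basic_structure}.7, as in the proof of Lemma~\ref{cnrptal}) is joined by a haploid edge to $c_{i+1}$. I would first handle the case that $B_i$ has no light cut-vertex, so $B_i^*=B_i$; here the two contractions $\phi$ and $\phi'$ both target a small fixed rooted graph in ${\cal L}$, and by superimposing the partitions $\{\phi^{-1}(x)\}_{x}$ and $\{\phi'^{-1}(x)\}_{x}$ of $V(B_i)$ along the Hamiltonian cycle one obtains a common refinement that contracts $B_i$ — now \emph{triply} rooted, remembering both that $c_i$ and $c_{i+1}$ play incompatible directional roles — to a graph large enough to sit inside ${\cal D}^1$ (the natural candidates being the graphs built from Figure~\ref{direction}, i.e.\ ${\cal O}_{11}$, which is precisely the set designed to ``force a sense of direction''). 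When $B_i^*$ additionally carries the hair block at $c$, the same argument applies after noting that $c$, being light, is handled uniformly by gluing as in the proof of Lemma~\ref{cnrptal}, so it does not interfere with the directional obstruction living in $B_i$ itself.

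The hard part will be the bookkeeping in that superposition step: verifying that for \emph{every} choice of ${\bf L},{\bf L}'\in{\cal L}$ the common refinement of the two vertex partitions of the cycle, together with the fixed positions of $c_i$ and $c_{i+1}$, really does contain one of the $78$ graphs of ${\cal O}_{11}$ (or, in degenerate small cases, one of ${\cal O}_2,\ldots,{\cal O}_9$) as a contraction, and not merely a graph that ``looks like'' it but fails the edge conditions (property~${\bf 2}$ of $\preceq$). Since ${\cal L}$ has only three elements, this is a finite but somewhat tedious check, of the same inspection flavour as Lemma~\ref{sub_dir}; I would organise it by how the two ``branch'' vertices of ${\bf L}$ relative to the $v$--$u$ path interleave with those of ${\bf L}'$ along the Hamiltonian cycle of $B_i$. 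Once a contraction to a member of ${\cal D}^1$ is exhibited, Lemma~\ref{goinaldirs}.iii gives the contradiction, so at least one of $\cmp({\bf B}_i^*)\leq 2$, $\cmp(\rrev({\bf B}_i^*))\leq 2$ must hold.
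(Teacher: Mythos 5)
Your opening move is exactly the paper's: assume both $\cmp({\bf B}_i^*)\geq 3$ and $\cmp(\rrev({\bf B}_i^*))\geq 3$, apply Lemma~\ref{cnrptal} to both orientations to get contractions onto members of ${\cal L}$, and then derive a forbidden contraction contradicting Lemma~\ref{goinaldirs}.iii. The divergence, and the gap, is in how you propose to finish. First, the ``common refinement'' of the two partitions $\{\phi^{-1}(x)\}$ and $\{\phi'^{-1}(x)\}$ is not a legitimate contraction certificate: the intersection of two connected vertex sets need not induce a connected subgraph, so property~${\bf 1}$ of $\preceq$ can fail for the refined partition, and nothing in your sketch repairs this. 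Second, your ``natural candidate'' target ${\cal O}_{11}$ cannot work: each graph in ${\cal O}_{11}$ is two non-trivial rooted graphs from Figure~\ref{direction} glued at a single cut-vertex, whereas $B_i$ is $2$-connected (so all its contractions on at least three vertices are $2$-connected) and $B_i^*$ adds at most one hair; moreover the directional conflict here is anchored at the \emph{two} distinct roots $c_i,c_{i+1}$ of one block, not at one vertex with two branches. ${\cal O}_{11}$ is the obstruction for the situation of Lemma~\ref{tplerr} (two \emph{different} extended blocks with opposite labels), not for a single bidirectionally obstructed central block.

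What the paper actually does at this point is short and concrete: having both $L$-contractions forces extra cut-vertices/haploid structure on $B_i^*$ positioned relative to both roots, and one checks that either $B_i^*$ would need at least four cut-vertices (contradicting Lemma~\ref{basic_structure}.4 --- the paper's citation of ``.9'' is a typo) or $G$ contracts to a graph in ${\cal O}_8$ when $c_i$ and $c_{i+1}$ are adjacent, and to a graph in ${\cal O}_6$ when they are not. So the case split is governed by the adjacency of the two roots and lands in ${\cal O}_6\cup{\cal O}_8$ --- the families you relegated to ``degenerate small cases'' are in fact the whole story. To salvage your write-up you should drop the superposition machinery, and instead read off directly from the three members of ${\cal L}$ what each orientation forces (a cut-vertex or haploid face near the in-root), and combine the two requirements into one of the two contractions above.
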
 

\begin{proof} 
Proceeding towards a contradiction, from Lemma~\ref{cnrptal}, both ${\bf B}_{i}^{*}$ and 
${\bf rev}({\bf B}_{i}^{*})$ must contain one of the rooted graphs in 
Figure~\ref{morein32sssminorc} as a contraction. It is easy to verify that, in this case, either 
$B_{i}^{*}$
contains at least four cut-vertices, which contradicts to Lemma~\ref{basic_structure}.9 or 
$G$ can be contracted to either a graph in ${\cal O}_8$ (if the two roots are adjacent) 
or a graph in $O_{6}$ (if the two roots are not adjacent), a contradiction to 
Lemma~\ref{goinaldirs}.3.
\end{proof}

Let $G\in {\cal Q}$ and let ${\bf B}_{i}^{*}$ be one of the central extended blocks of $G$.

\begin{itemize}
\item If
${\bf rev}({\bf B}_{i}^{*})$ can be contracted to a graph in $\cal L$, then  we assign 
to ${\bf B}_{i}^*$ the label $\leftarrow$.

\item If 
${\bf B}_{i}^{*}$ can be contracted to a graph in $\cal L$, then  we assign 
to ${\bf B}_{i}^*$ the label $\rightarrow$.

\item If both ${\bf B}_{i}^{*}$ and ${\bf rev}({\bf B}_{i}^{*})$
can be contracted to a graph in $\cal L$, then  we assign 
to ${\bf B}_{i}^*$ the label $\leftrightarrow$.
\end{itemize}

\begin{figure}[h!] 
\begin{center} 
\scalebox{0.75}{\includegraphics{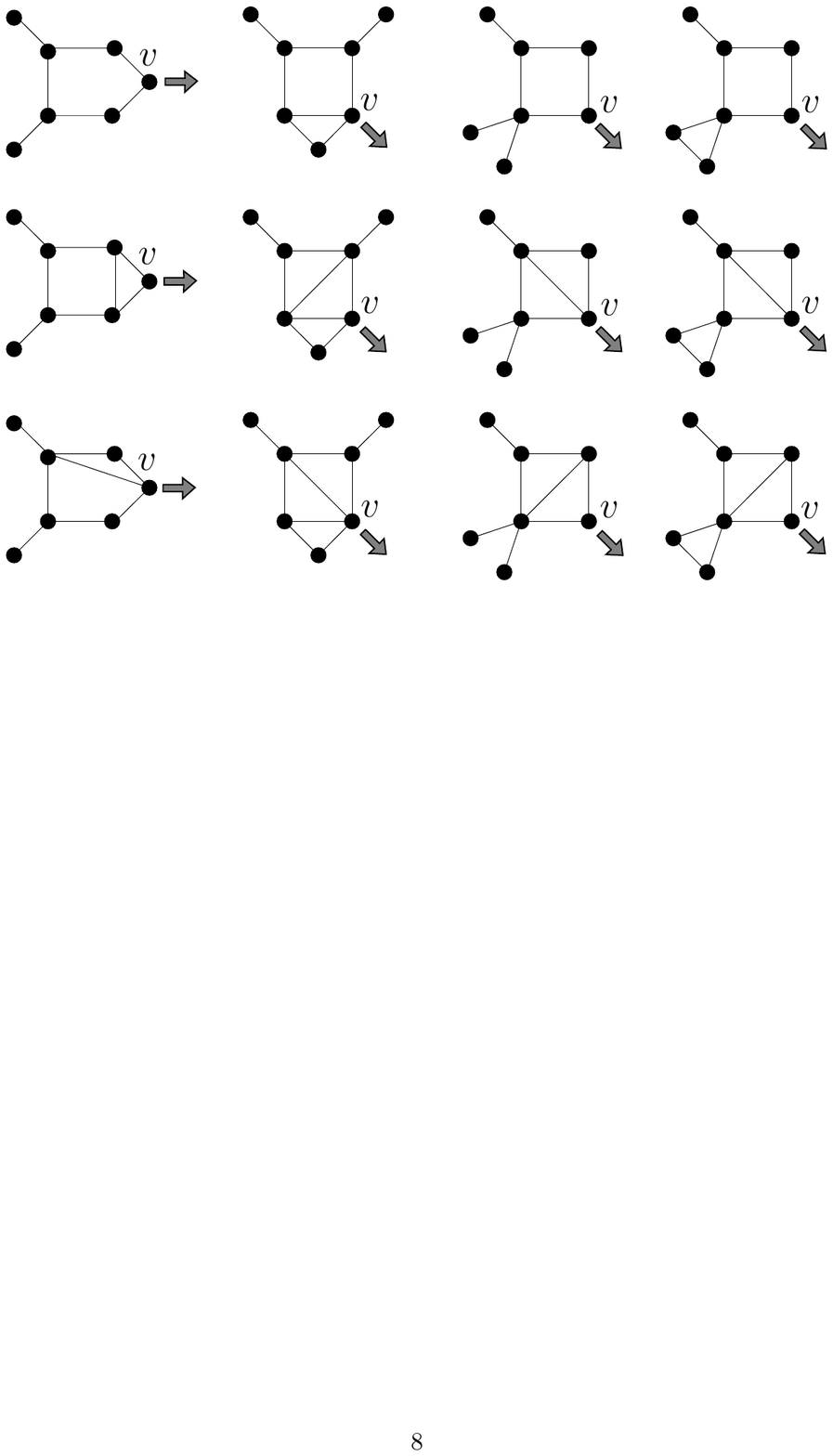}} 
\end{center}
\caption{The set of rooted graphs ${\cal B}$ containing 12 rooted graphs each of the form 
$(G,\emptyset,\{v\})$.} 
\label{direction} 
\end{figure}

\begin{lemma}
\label{ro4hk}
Let $G\in {\cal Q}$ and let ${\bf B}_0^*$ be the extended left extremal block of $G$.
None of the rooted graphs in the set ${\cal B}$ in Figure~\ref{direction}
is a contraction of ${\bf B}_{0}^{*}$ if and only if $\cmp({\bf B}_{0}^{*})\leq 2$.
\end{lemma}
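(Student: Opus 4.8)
The plan is to follow exactly the same two-directional template that was used for Lemma~\ref{cnrptal}, adapting it to the one-sided rooting $(B_0,\emptyset,\{c_1\})$ of an extremal block. The statement is an ``iff'', so there are two implications. For the forward direction (contrapositive: if ${\bf B}_0^*$ contracts to some $H\in{\cal B}$, then $\cmp({\bf B}_0^*)\ge 3$), the argument is immediate from Lemma~\ref{lem:clos} together with the inspection fact that $\cmp(H,\emptyset,\{v\})=3$ for every $H\in{\cal B}$; since ${\cal B}$ is a finite set of small graphs this is a finite verification, analogous to the opening paragraph of the proof of Lemma~\ref{cnrptal}. So the content is entirely in the reverse direction.

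For the reverse direction, suppose ${\bf B}_0^*$ admits no contraction to a graph in ${\cal B}$; I would build an explicit monotone connected $(E^{\rm in},E^{\rm out})$-expansion of $\rep({\bf B}_0^*)$ of cost at most $2$. First handle the clean case where $B_0$ contains no light cut-vertex: then ${\bf B}_0^*=(B_0,\emptyset,\{c_1\})={\bf G}_{B_0}$ in the notation preceding Lemma~\ref{bolek} (with $X=\emptyset$, $Y=\{c_1\}$), and Lemma~\ref{bolek} gives $\cmp({\bf G}_{B_0})\le 2$ directly. The non-trivial case is when $B_0$ has a light cut-vertex $c$. As in Lemma~\ref{cnrptal}, $c$ cannot be joined to $c_1$ by an outer edge (otherwise ${\bf B}_0^*$ contracts to a member of ${\cal B}$ — this is the analogue of the ${\cal L}$-obstruction step, and it is the place where the ``sense of direction'' forced by ${\cal B}$ is actually used), so by Lemma~\ref{basic_structure}.7 the vertex $c$ is joined to the out-root by a haploid edge; hence ${\bf G}_{B_0}=(B_0,\emptyset,\{c_1,c\})$ is a legitimate rooted-graph triple in the sense required by Lemma~\ref{bolek}, and that lemma yields $\cmp({\bf G}_{B_0})\le 2$. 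By Lemma~\ref{lem:interfdans}.b, ${\cal R}_G^{(c)}$ contains only a hair block $(H,\{c\},\{c\})$ with $\cmp(H,\{c\},\{c\})=2$, and one finishes by gluing: with ${\bf G}_1=(G[\{c,c_1\}],\{c,c_1\},\{c\})$ and ${\bf G}_2=(G[\{c,c_1\}],\{c\},\{c_1\})$, set ${\bf G}={\bf glue}({\bf G}_{B_0},{\bf G}_1,(H,\{c\},\{c\}),{\bf G}_2)$; Lemma~\ref{glue} gives $\cmp({\bf G})\le 2$, and ${\bf G}={\bf B}_0^*$. I would also remark that when $B_0$ is trivial (a bridge or hair block) everything degenerates and the bound is trivial, so attention restricts to $B_0$ a cycle or essential block, where Lemmata~\ref{basic_structure} and~\ref{bolek} apply.

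The main obstacle — and the only step that is not a verbatim transcription of Lemma~\ref{cnrptal}'s proof — is pinning down exactly which configurations of $B_0$ force a contraction into ${\cal B}$, i.e.\ checking that the list of $12$ rooted graphs in Figure~\ref{direction} is precisely the right set of ``directional'' obstructions for a one-sided-rooted extremal block. Concretely one must argue: if $B_0$ has too many cut-vertices, or a cut-vertex in the ``wrong'' haploid face, or a non-haploid chord configuration, or a light cut-vertex adjacent to $c_1$, then a contraction into ${\cal B}$ exists; and conversely the structural constraints of Lemma~\ref{basic_structure} (items 3--8) leave only the two cases handled above. This is a finite but somewhat delicate case analysis over the possible shapes of $B_0$ under the cut-vertex/haploid-face bounds already established; since ${\cal B}$ is explicitly given and $B_0$ is constrained to be outerplanar with at most a few relevant cut-vertices, it can be carried out by inspection in the same spirit as Lemmata~\ref{sub_dir}, \ref{cnrptal}, and~\ref{cnrptal2}.
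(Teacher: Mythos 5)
Your forward direction and your overall template (finite check via Lemma~\ref{lem:clos}, then an explicit low-cost expansion assembled from Lemmata~\ref{bolek}, \ref{seasrchfun}, \ref{lem:interfdans}, \ref{glue} and \ref{emptymake}) match the paper, but your reverse direction has a genuine gap: you imported the case split of Lemma~\ref{cnrptal} (light cut-vertex present or not), and that dichotomy is wrong for an extremal block. First, your ``clean case'' claim that ${\bf B}_0^*={\bf G}_{B_0}=(B_0,\emptyset,\{c_1\})$ whenever $B_0$ has no light cut-vertex fails as soon as $B_0$ has a second (heavy) cut-vertex $c$: then $B_0^*$ properly contains $B_0$ (the fans of ${\cal R}_{G}^{(c)}$ hang off $c$), the triple ${\bf G}_{B_0}$ must carry $c$ as a root, and the paper handles this by rooting ${\bf G}_{B_0}=(B_0,\{c\},\{c_1\})$, cleaning the fan at $c$ first (Lemma~\ref{seasrchfun}), gluing, and only then invoking Lemma~\ref{emptymake} to discard the in-root. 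This subcase --- start at the far cut-vertex and sweep toward $c_1$ --- is exactly where the one-sided rooting and the ``sense of direction'' enforced by ${\cal B}$ do their work, and it is conditioned not on lightness but on the haploid-face geometry ($B_0$ chordless, or $c$ and $c_1$ incident to different haploid faces). Second, your light-cut-vertex case is internally inconsistent: you assert that $c$ cannot be joined to $c_1$ by an outer edge and then set ${\bf G}_{B_0}=(B_0,\emptyset,\{c_1,c\})$, but the definition preceding Lemma~\ref{bolek} requires the two vertices of $Y$ to be joined by an edge (haploid when $B_0$ has a chord); in the paper this rooting is used precisely when $c$ \emph{is} adjacent to $c_1$ and light, the opposite of what you wrote.

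Beyond that, two substantive pieces are missing rather than merely sketched. The three-cut-vertex case (permitted by Lemma~\ref{basic_structure}.4) needs its own construction, gluing five rooted graphs --- the fan at $x$, the block rooted as $(B_0,\{x\},\{c,c_1\})$, the edge $\{c,c_1\}$ taken twice, and the hair at $c$ --- together with the identification of the admissible partition $\{X,Y\}$ of the three cut-vertices. And the converse half of the equivalence, namely that \emph{every} configuration not covered by the explicit constructions contracts onto some member of ${\cal B}$ (no suitable haploid edge into $c_1$; a light pair lying in the wrong haploid face; only a heavy haploid neighbour of $c_1$; and so on), is not an ``inspection over finitely many small graphs'' as in Lemma~\ref{sub_dir} but a structured enumeration of the possible shapes of $B_0$ against the columns of Figure~\ref{direction}; deferring it wholesale leaves half of the lemma unproved.
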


\begin{proof} 
Clearly, for every graph $H\in\cal B$, $\cmp(H,\emptyset,\{u\})=3$. Therefore, if ${\bf B}_{0}^*$ can 
be contracted to a graph in $\cal B$, according to Lemma~\ref{lem:clos},  $\cmp({\bf B}_{0}^*)\geq 3$.

Suppose now  that ${\bf B}_{0}^*$ cannot be contracted to a graph in $\cal B$. 
We distinguish three cases according to the number of cut-vertices in $B_0$ 
(recall that, from Lemma~\ref{basic_structure}.4, $B_0$ can have at most 3 cut-vertices).\\

\noindent{\em Case 1:} $B_0$ contains only one cut-vertex, which  is $c_1$. Then,  
${\bf B}_{0}^*={\bf G}_{B_{0}}$ and the result follows  because of Lemma~\ref{bolek}.\\

\noindent{\em Case 2:} $B_0$ contains two cut-vertices, $c_1$ and $c$. If $B_{0}$ has 
not a chord or it has a chord and $c$ and $c_1$ are  incident to two different haploid faces of $B_0$ 
then we can assume that ${\bf G}_{B_{0}}=(B_{0},\{c\},\{c_1\})$ and 
from  Lemma~\ref{bolek}, $\cmp({\bf G}_{B_0})\leq 2$.
According to Lemma~\ref{lem:interfdans}.a, ${\cal R}^{(c)}$ is a fan, 
say $(F, \{c\},\{c\})$ and from Lemma~\ref{seasrchfun} $\cmp(F, \{c\},\{c\})\leq 2$. Let 
${\bf G}={\bf glue}((F, \{c\},\{c\}),{\bf G}_{B_0})$. From Lemma~\ref{glue},  $\cmp({\bf G})\leq 2$. 
Combining the fact that ${\bf G}=(B_{0}^{*},\{c\},\{c_1\})$ with  
Lemma~\ref{emptymake}, $\cmp({\bf B}_{0}^{*})\leq 2$.
In the remaining case  $c$ and $c_{1}$ are adjacent and $c$ is light.
Then ${\bf G}_{B_{0}}=({B}_{0},\emptyset,\{c,c_1\})$ and, from Lemma~\ref{bolek}, 
$\cmp({\bf G}_{B_{0}})\leq 2$. According to Lemma~\ref{lem:interfdans}.b, 
${\cal R}^{(c)}$ is a hair, say $(H, \{c\},\{c\})$. Let ${\bf G}_1=(G[\{c,c_{1}\}], \{c,c_{1}\}, \{c\})$, 
${\bf G}_2=(G[\{c, c_{1}\}], \{c\},\{c_{1}\})$ and
${\bf G}={\bf glue}({\bf G}_{B_{0}}, {\bf G}_1, (H, \{c\},\{c\})$, ${\bf G}_2)$. Notice that 
${\bf G}=(B_{0}^*,\emptyset,\{c_{1}\})={\bf B}_0^{*}$. From Lemma~\ref{glue}, we obtain that 
$\cmp({\bf G})\leq 2$, therefore $\cmp({\bf B}_{0}^{*})\leq 2$.\\

\noindent{\em Case 3:} $B_0$ contains three cut-vertices, $c_1$, $c$ and $x$.  
We first examine the case where there is a partition $\{X,Y\}$ of $\{c_1,c,x\}$ 
such that $|Y|=2$, $c_1\in Y$, the cut-vertex in $Y\setminus \{c_1\}$ is light, and $Y$ is an edge of 
$B_0$ that, in case $B_{0}$ is a chord, is haploid. In this case we claim that $\cmp({\bf B}_{0}^{*})$ 
$\leq 2$. Indeed,  we may assume that $c$ be the light cut-vertex of $Y\setminus\{c_1\}$, thus 
${\bf G}_{B_{0}}=(B_{0},\{x\},\{c,c_1\})$. According 
to Lemma~\ref{bolek}, $\cmp({\bf G}_{B_{0}})\leq 2$. From   Lemma~\ref{lem:interfdans}.a, 
${\cal R}^{(x)}$ 
is a fan, say $(F, \{x\},\{x\})$ and,  from   Lemma~\ref{lem:interfdans}.b, ${\cal R}^{(c)}$ 
contains only a hair block, say $(H,\{c\},\{c\})$. 
Let ${\bf G}_1=(G[\{c,c_{1}\}], \{c,c_{1}\}, \{c\})$, ${\bf G}_2=(G[\{c, c_{1}\}], \{c\},\{c_{1}\})$ and
${\bf G}={\bf glue}((F, \{x\},\{x\}), {\bf G}_{B_0}, {\bf G}_1, (H, \{c\},\{c\})$, ${\bf G}_2)$. Notice that 
${\bf G}=(B_{0}^*,\{x\},\{c_{1}\})$ and. From Lemma~\ref{glue}
$\cmp({\bf G})\leq 2$. Now, Lemma~\ref{emptymake} implies that $\cmp({\bf B}_{0}^{*})$ $\leq 2$ 
and the claim holds.

In the remaining cases, the following may happen:\medskip

1. None of $x$ and $c$ is adjacent to $c_1$ via an edge that, in case $B_{0}$  
has a chord, is haploid. In this case ${\bf B}_{0}^*$ can be contracted to the rooted 
graphs of the first column in Figure~\ref{direction}.\smallskip

2. Both $c_{1}$ and $x$ are light and only one of them, say $x$, is adjacent to $c_1$. 
In this case $B_{0}$ has a chord and 
either the edge $\{x,c_1\}$ is not haploid or  $\{x,c_1\}$ is haploid and belongs in the same 
haploid face with $c$. In the first case, ${\bf B}_{0}^*$ can be contracted to the second 
rooted graph of the second column in Figure~\ref{direction}
and in the second case   ${\bf B}_{0}^*$ can be contracted to the first and the third rooted 
graph of the second column in Figure~\ref{direction}.\smallskip

3. $c_{1}$ has only one, say $x$, heavy neighbour in $\{c,x\}$ such that,   in case $B_{0}$  
has a chord, the edge $\{c_{1},x\}$ is haploid.
In this case ${\bf B}_{0}^*$ can be contracted to the rooted graphs of the third and the fourth  
column in Figure~\ref{direction}.
\end{proof}

\begin{lemma}
\label{moreli}
Let $G\in {\cal Q}$ and let ${\bf B}_{r+1}^*$ be the extended right extremal block of $G$.
None of the rooted graphs in the set ${\cal C}$ in Figure~\ref{if9ton}
is a contraction of ${\bf B}_{r+1}^*$ if and only if  $\cmp({\bf B}_{r+1}^*)\leq 2$.
\end{lemma}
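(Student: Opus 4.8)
The plan is to proceed exactly along the lines of the proof of Lemma~\ref{ro4hk}, the only structural difference being that the distinguished cut-vertex $c_{r+1}$ now plays the rôle of an \emph{in}-root — it is the vertex at which the sweep of the final arm must be \emph{started} — rather than that of an out-root, as $c_1$ was for the left extremal block. For the easy direction I would check, by inspection (a finite task, as for the other directional lemmas), that $\cmp(H)=3$ for every $H\in{\cal C}$; then, if ${\bf B}_{r+1}^{*}$ has some $H\in{\cal C}$ as a contraction, Lemma~\ref{lem:clos} immediately gives $\cmp({\bf B}_{r+1}^{*})\geq\cmp(H)=3>2$.

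For the converse, assume ${\bf B}_{r+1}^{*}$ has no member of ${\cal C}$ as a contraction; the goal is a monotone connected $(E^{\rm in},E^{\rm out})$-expansion of $\rep({\bf B}_{r+1}^{*})$ of cost at most $2$. Let $B$ be the block of the right arm $B_{r+1}$ that contains $c_{r+1}$. By Lemma~\ref{basic_structure}.4, $B$ carries at most three cut-vertices of $G$, and by Lemma~\ref{lem:interfdans} every branch of $G$ hanging off a cut-vertex of $B$ other than $c_{r+1}$ is a fan — a hair block if that cut-vertex is light. I would split on the number of cut-vertices of $B$, exactly as in Lemma~\ref{ro4hk}. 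If it is only $c_{r+1}$, then the arm coincides with $B$ and ${\bf B}_{r+1}^{*}={\bf G}_{B_{r+1}}=(B_{r+1},\{c_{r+1}\},\emptyset)$, so Lemma~\ref{bolek} finishes it. In the remaining cases I would realise ${\bf B}_{r+1}^{*}$ as a ${\bf glue}$ of: the canonically rooted block ${\bf G}_{B_{r+1}}$, in-rooted at $c_{r+1}$ and out-rooted at the remaining cut-vertices of $B$ (with $\cmp\leq2$ by Lemma~\ref{bolek}); the fans hanging off those cut-vertices (with $\cmp\leq2$ by Lemma~\ref{seasrchfun}); the hair block(s) at the light ones; and the same small one-edge ``frontier re-routing'' gadgets $(G[\{c,c'\}],\{c,c'\},\{c\})$ and $(G[\{c,c'\}],\{c\},\{c'\})$ used in Lemma~\ref{ro4hk}. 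Then Lemma~\ref{glue} bounds the $\cmp$ of the glued rooted graph by $2$, and Lemma~\ref{emptymake} lets us erase the auxiliary out-roots to recover $\cmp({\bf B}_{r+1}^{*})\leq2$ — the connectedness hypothesis of Lemma~\ref{emptymake} is trivial here, since the in-root is the single vertex $c_{r+1}$. Whenever a sub-configuration makes this gluing impossible — a cut-vertex of $B$ adjacent to $c_{r+1}$ being heavy, being joined to $c_{r+1}$ by a non-haploid edge, or lying on the ``wrong'' haploid face of $B$, etc. — one instead exhibits a contraction of ${\bf B}_{r+1}^{*}$ onto one of the six rooted graphs of ${\cal C}$ (a contradiction), or onto some graph in ${\cal O}_{2}\cup\cdots\cup{\cal O}_{9}$, contradicting Lemma~\ref{goinaldirs}.iii.

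The hard part is the exhaustiveness of this enumeration. One cannot shortcut it via reversal: although the underlying block ${\rm rev}({\bf B}_{r+1}^{*})=(B_{r+1},\emptyset,\{c_{r+1}\})$ has the same shape as a left extremal block, $\cmp$ is \emph{not} invariant under ${\rm rev}$ — indeed ${\cal C}$ has only $6$ rooted graphs while ${\cal B}$ has $12$, which is precisely the ``sense of direction'' asymmetry the paper highlights (being forced to hold a vertex on the frontier until the \emph{end} is strictly more restrictive than being allowed to begin the sweep there, so the out-rooted left block has strictly more obstructions). Hence the sub-case analysis must genuinely be redone with $c_{r+1}$ as the in-root: for a $2$-connected outerplanar block with its two haploid faces, one must determine which orientations of its Hamiltonian cycle admit a cost-$2$ monotone connected sweep that \emph{starts} at $c_{r+1}$, how the at most two extra cut-vertices (with their fans and hairs) can be carried along, and then verify that the orientations and adjacency patterns that fail are exactly those captured by ${\cal C}$, with nothing missed. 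A secondary technical point to watch is that the re-routing gadgets are glued in the correct order — so that $c_{r+1}$ is consumed first rather than last — and that no intermediate glue step destroys connectivity of the induced subgraph.
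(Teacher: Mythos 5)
Your proposal follows essentially the same route as the paper's proof: the easy direction via Lemma~\ref{lem:clos} after checking $\cmp(H,\{u\},\emptyset)=3$ for each $H\in{\cal C}$, and the converse by a case split on the number of cut-vertices of $B_{r+1}$ (at most three by Lemma~\ref{basic_structure}.4), realising ${\bf B}_{r+1}^{*}$ as a ${\bf glue}$ of ${\bf G}_{B_{r+1}}$, the fans, the hairs and the one-edge re-routing gadgets, then invoking Lemmata~\ref{bolek}, \ref{seasrchfun}, \ref{glue} and~\ref{emptymake}. The enumeration you defer is exactly the paper's Cases~2 and~3, whose only failure configurations are the second cut-vertex sharing a haploid face with $c_{r+1}$ (second column of ${\cal C}$) and the two extra cut-vertices being non-adjacent (first column of ${\cal C}$).
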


\begin{proof}
Clearly, for every graph $H\in\cal C$, $\cmp(H,\{u\}, \emptyset)=3$, therefore if ${\bf B}_{r+1}^*$ can 
be contracted to a graph in $\cal C$, according to Lemma~\ref{lem:clos},  
$\cmp({\bf B}_{r+1}^*)\geq 3$.

Suppose now  that ${\bf B}_{r+1}^*$ cannot be contracted to a graph in $\cal C$. 
We distinguish three cases according to the number of cut-vertices in $B_{r+1}$ 
(recall that, from Lemma~\ref{basic_structure}.4, $B_{r+1}$ can have at most 3 cut-vertices).\\

\noindent{\em Case 1:} $B_{r+1}$ contains only one cut-vertex, which of course is $c_1$. 
Notice that  ${\bf B}_{r+1}^*={\bf G}_{B_{r+1}}$ and the result follows  because of 
Lemma~\ref{bolek}.\\

\noindent{\em Case 2:}  If $B_{r+1}$ has not a chord or it has a chord and $c$ and $c_1$ 
are  incident to two different haploid faces of $B_{r+1}$ 
then we can assume that ${\bf G}_{B_{r+1}}=(B_{r+1},\{c_{r+1}\},\{c\})$ and 
from  Lemma~\ref{bolek}, $\cmp({\bf G}_{B_{r+1}})\leq 2$.
In any other case, $c_{r+1}$ and $c$ is on the boundary  in the same   haploid face of $B_{r+1}$  
and none of them belongs in the boundary of  the other.  Then ${\bf B}_{r+1}^{*}$ can be contracted to 
some of the rooted graphs in the second column of Figure~\ref{if9ton}.\medskip

\noindent{\em Case 3:} $B_{r+1}$ contains three cut-vertices, $c_{r+1}$, $c$ and $x$. 
If $c$ and $x$ are not adjacent, then  ${\bf B}_{r+1}^{*}$ can be contracted to 
some of the rooted graphs in the first column of Figure~\ref{if9ton}.
Otherwise, one, say $c$, of them will be light and the edge $\{x,c\}$
should be haploid. Then we can assume that ${\bf G}_{B_{r+1}}=(B_{r+1},\{c_{r+1}\},\{x,c\})$ 
and according to Lemma~\ref{bolek}, 
$\cmp({\bf G}_{B_{r+1}})\leq 2$.
From Lemma~\ref{lem:interfdans}, 
${\cal R}^{(x)}$ is a fan, say $(F, \{x\},\{x\})$ and ${\cal R}^{(c)}$ contains only a hair block, say 
$(H,\{c\},\{c\})$. 
Let ${\bf G}_1=(G[\{c,x\}], \{c,x\}, \{c\})$, ${\bf G}_2=(G[\{c, x\}], \{c\},\{x\})$ and
${\bf G}={\bf glue}({\bf G}_{B_{r+1}}, {\bf G}_1, $ $(H, \{c\},\{c\}), {\bf G}_2, (F, \{x\},\{x\}))$. Notice that 
${\bf G}=(B_{r+1}^*,\{c_{r+1}\},\{x\})$ and, because of Lemma~\ref{glue},
$\cmp({\bf G})\leq 2$. Applying Lemma~\ref{emptymake}, we conclude that 
$\cmp({\bf B}_{r+1}^{*})\leq 2$.
\end{proof}

\begin{figure}[h!] 
\begin{center} 
\scalebox{0.75}{\includegraphics{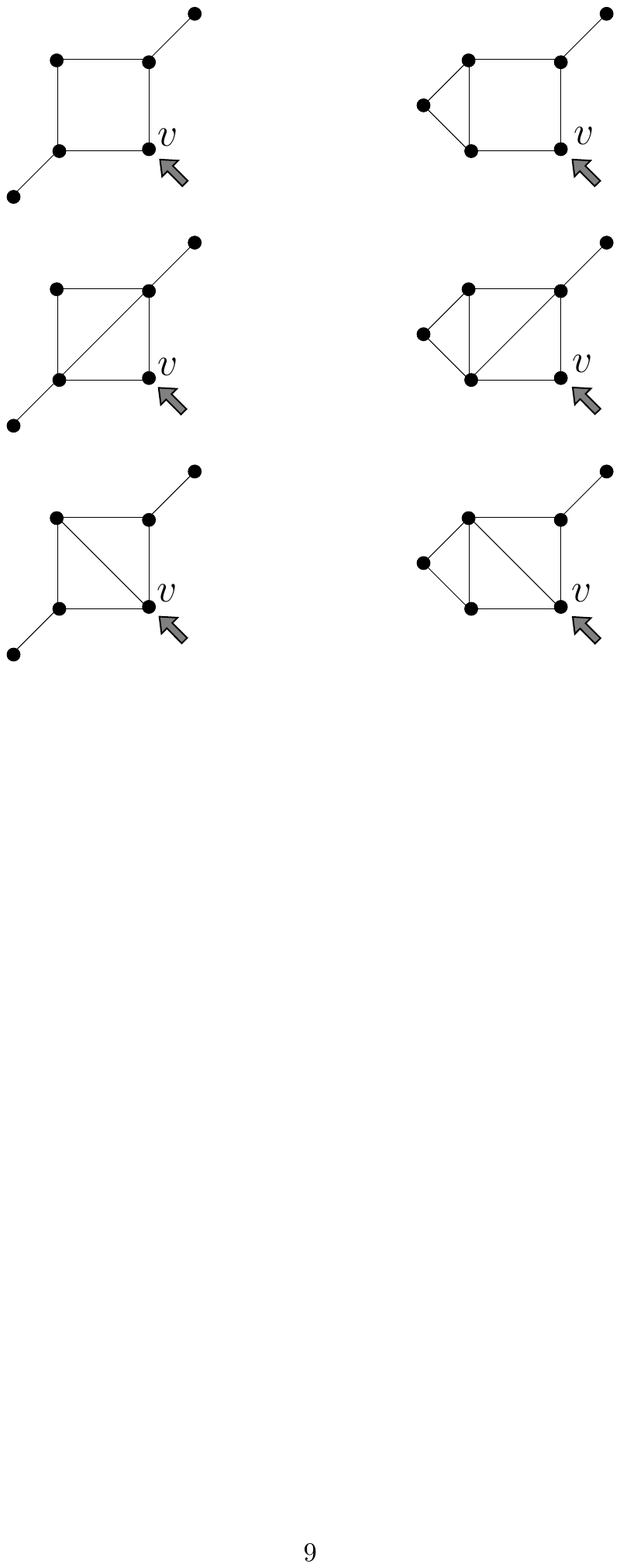}} 
\end{center}
\caption{The set of rooted graphs ${\cal C}$ containing six rooted graphs each of the form 
$(G,\{v\},\emptyset)$.} 
\label{if9ton} 
\end{figure}

\begin{lemma}
\label{tboths}
Let $G\in {\cal Q}$ and let ${\bf B}^*_{0}$ and ${\bf B}^{*}_{r+1}$  be the two extremal 
extended blocks of $G$.
 It is never the case that
 ${\bf B}^*_{0}$ contains some graph in ${\cal B}$ and 
  $\rrev({\bf B}^*_{0})$ contains some rooted graph in ${\cal C}$.
  Also it is never the case that
 ${\bf B}^{*}_{r+1}$ contains some graph in ${\cal C}$ and 
  $\rrev({\bf B}^{*}_{r+1})$ contains some rooted graph in ${\cal B}$.
\end{lemma}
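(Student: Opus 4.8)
The plan is to argue by contradiction, following the pattern of the proof of Lemma~\ref{cnrptal2}. Suppose ${\bf B}_0^*$ can be contracted to some $H_1\in{\cal B}$, say via $\phi_1$, and that $\rrev({\bf B}_0^*)$ can be contracted to some $H_2\in{\cal C}$, say via $\phi_2$. Since $c_1$ is the only root of ${\bf B}_0^*$, conditions~3 and~4 of the definition of $\preceq$ force both $\phi_1$ and $\phi_2$ to send $c_1$ to the distinguished vertex of $H_1$ and of $H_2$ respectively; reversing $\phi_2$ we also get ${\bf B}_0^*\succeq\rrev(H_2)$, where $\rrev(H_2)$ has the shape $(G_2,\emptyset,\{v_2\})$ of the members of ${\cal B}$, again with $c_1\mapsto v_2$. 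I then record the facts that will be played against these two contractions: by Lemma~\ref{basic_structure}.1 the graph $G$, and hence $B_0$, is outerplanar, so each $2$-connected block of $B_0$ carries a unique Hamiltonian cycle; by Lemma~\ref{basic_structure}.4 no block of $G$ carries more than three cut-vertices of $G$; by Lemma~\ref{basic_structure}.5 every cut-vertex of a non-trivial block of $G$ is haploid; and, since $c_1$ is a central cut-vertex, it has spine-degree at least $2$, so besides the branch containing $B_0$ there is a further non-fan branch $R\in{\cal C}_G^{(c_1)}$, edge-disjoint from $B_0$, which (doubly rooted at $c_1$) contracts to a member of ${\cal A}$.

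The core of the argument is a finite inspection of the pairs $(H_1,H_2)\in{\cal B}\times{\cal C}$, organised along the column structure of Figures~\ref{direction} and~\ref{if9ton} and cut down by outerplanarity. For each such pair I would read off from $\phi_1$ and $\phi_2$ the two partitions of $V(B_0)$ into connected bags, and locate, on the block structure of $B_0$ and relative to $c_1$ and to the at most two other cut-vertices of $G$ lying on $B_0$, the direction-forcing part of $H_1$ (which points towards $c_1$) and that of $H_2$ (which points away from $c_1$). Three outcomes are possible. First, the two gadgets may be realisable on essentially disjoint parts of $B_0$ that meet only near $c_1$; contracting accordingly places two non-fan (indeed ${\cal B}$- or ${\cal C}$-shaped) gadgets on $c_1$, which together with the branch $R$ exhibits a contraction of $G$ onto a graph in ${\cal O}_{10}\cup{\cal O}_{11}\cup{\cal O}_{12}$. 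Second, reconciling the two gadgets may force a fourth cut-vertex of $G$ on the relevant block, contradicting Lemma~\ref{basic_structure}.4, or it may violate the block-local constraints of Lemma~\ref{basic_structure}.6 or~\ref{basic_structure}.7 and yield a contraction of $G$ onto a graph in ${\cal O}_4\cup{\cal O}_5\cup{\cal O}_6$. Third, the two gadgets may overlap on a single small essential sub-block that is obstructing in both directions; here the extra chord or haploid edge forced by Lemma~\ref{basic_structure}.3,~\ref{basic_structure}.5 and~\ref{basic_structure}.7 produces a contraction of $G$ onto a graph in ${\cal O}_8\cup{\cal O}_9$. In every case Lemma~\ref{goinaldirs}.iii is contradicted, which proves the first assertion.

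The second assertion is the mirror image. The rooted graph ${\bf B}_{r+1}^*$ has the shape $(B_{r+1},\{c_{r+1}\},\emptyset)$, so $\rrev({\bf B}_{r+1}^*)$ has the shape $(B_{r+1},\emptyset,\{c_{r+1}\})$ of a left extremal extended block; running the analysis above with the roles of ${\cal B}$ and ${\cal C}$ (and of ${\cal O}_{11}$ and ${\cal O}_{12}$) interchanged, and invoking Lemmata~\ref{ro4hk} and~\ref{moreli} in place of one another, gives the claim verbatim.

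The step I expect to be the main obstacle is the dichotomy at the start of each case: deciding whether the two direction-forcing gadgets can be placed on disjoint parts of $B_0$ or are forced to overlap, and, in the overlapping case, pinning down exactly which extra feature of $B_0$ (a chord, an additional cut-vertex, or the interplay with the two haploid faces of an essential block) is thereby forced. This is the only place where the precise shapes of the $12+6$ rooted graphs of Figures~\ref{direction} and~\ref{if9ton} really enter, and keeping the haploid-face bookkeeping consistent across all cases is what makes the enumeration delicate; the actual reductions to members of ${\cal D}^1$, once the relevant configuration is identified, are then routine.
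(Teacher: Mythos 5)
Your overall strategy is the one the paper uses: assume both contractions exist, exploit the fact that $\rrev({\bf B}^*_0)$ and ${\bf B}^*_{r+1}$ have the same shape $(\cdot,\{c\},\emptyset)$ so that one analysis covers both assertions, and then run a finite case analysis indexed by the columns of Figures~\ref{direction} and~\ref{if9ton}, ending in each case with a contraction of $G$ onto a member of ${\cal D}^1$ that contradicts Lemma~\ref{goinaldirs}.iii. However, the proposal stops exactly where the proof begins. The entire content of this lemma is the column-by-column verification (the paper needs six cases: two columns of ${\cal C}$ against the first, second, and third-or-fourth columns of ${\cal B}$), and you replace it with a generic description of ``three possible outcomes'' without checking a single pair. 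You yourself flag the dichotomy between disjoint and overlapping gadgets as the main obstacle; that dichotomy, and the haploid-face bookkeeping it requires, is precisely what a proof of this statement has to supply.

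Moreover, the outcomes you predict do not match what actually happens, which suggests the sketch would not survive being carried out as written. Your first outcome invokes the extra non-fan branch $R$ at $c_1$ and a contraction onto ${\cal O}_{10}\cup{\cal O}_{11}\cup{\cal O}_{12}$; the paper never needs anything outside the single extended block, and those obstruction families are reserved for Lemma~\ref{tplerr}, where \emph{different} extended blocks carry opposite labels. Since both gadgets are quotients of the same rooted graph sharing the root $c_1$, they cannot be ``realised on essentially disjoint parts'' in the way you describe: the correct deductions are statements about the at most two non-central cut-vertices of the branch (whether they are joined by outer edges, whether those edges are haploid, which haploid face contains the central cut-vertex), and each of the six cases lands in ${\cal O}_2$, ${\cal O}_4$, or ${\cal O}_6$--${\cal O}_9$. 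So the gap is concrete: the enumeration must be done, and done against the actual shapes of the $12+6$ rooted graphs rather than against a hypothesised trichotomy.
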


\begin{proof} 
Let ${\bf G}$ be a rooted graph in $K=\{\rrev({\bf B}^*_{0}),{\bf B}^{*}_{r+1}\}$.
We distinguish the following cases, that apply for both rooted graphs in $K$:\\

\noindent{\em Case 1:} $\bf G$ can be contracted to a graph in the first column of Figure~\ref{if9ton}
 and $\rrev(\bf G)$ to a graph in the first column of Figure~\ref{direction}. Notice that every cut-vertex 
 of $\bf G$ cannot be connected with an outer edge and therefore  $\bf G$ can be contracted to graph 
 in ${\cal O}_6$.\\

\noindent{\em Case 2:} $\bf G$ can be contracted to a graph in the first column of Figure~\ref{if9ton}
 and $\rrev(\bf G)$ to a graph in the second column of Figure~\ref{direction}. 
 Notice that the two cut-vertices
of $\bf G$, that are other than the central cut-vertex, cannot be connected with an outer edge 
and therefore  
$\bf G$ can be contracted to graph in ${\cal O}_7$.\\

\noindent{\em Case 3:} $\bf G$ can be contracted to a graph in the first column of Figure~\ref{if9ton}
 and $\rrev(\bf G)$ to a graph in the third or fourth column of Figure~\ref{direction}. Notice that the 
 two cut-vertices
of $\bf G$, that are other than the central cut-vertex, cannot be connected with an outer edge 
and therefore  
$\bf G$ can be contracted to graph in ${\cal O}_8$.\\

\noindent{\em Case 4:} $\bf G$ can be contracted to a graph in the second column of 
Figure~\ref{if9ton}
 and $\rrev(\bf G)$ to a graph in the first column of Figure~\ref{direction}. Notice that the two 
 cut-vertices
of $\bf G$, that are other than the central cut-vertex, cannot be connected with an outer edge 
and therefore  
$\bf G$ can be contracted to graph in ${\cal O}_7$.\\

\noindent{\em Case 5:} $\bf G$ can be contracted to a graph in the second column of 
Figure~\ref{if9ton}
 and $\rrev(\bf G)$ to a graph in the second column of Figure~\ref{direction}. Notice that there must be 
an haploid face containing only the central cut-vertex, therefore $\bf G$ can be contracted either to
the graph in ${\cal O}_2$  or to a graph in ${\cal O}_4$ (depending whether $\bf G$ can be 
contracted to the last graph in the second column of Figure~\ref{if9ton} or not) .\\
 
\noindent{\em Case 6:} $\bf G$ can be contracted to a graph in the second column of 
Figure~\ref{if9ton}
 and $\rrev(\bf G)$ to a graph in the third or fourth column of Figure~\ref{direction}. 
 Notice that the light cut-vertex of ${\bf  G}$ can not be connected via an haploid edge 
 with the central cut-vertex, therefore $\bf G$ can be contracted to a graph in ${\cal O}_7$ 
 either to a graph in ${\cal O}_9$ (depending whether the central cut-vertex is connected 
 via haploid edge with a heavy cut-vertex or not).
\end{proof}

\begin{itemize}
\item If ${\bf B}^*_{0}$ contains some graph in ${\cal B}$  then we assign 
to ${\bf B}^*_{0}$ the label $\leftarrow$.

\item If $\rrev({\bf B}^*_{0})$ contains some graph in ${\cal C}$ then we assign 
to ${\bf B}^*_{0}$ the label $\rightarrow$.

\item If ${\bf B}^{*}_{r+1}$ contains some graph in ${\cal C}$  then we assign 
to ${\bf B}^{*}_{r+1}$ the label $\leftarrow$.

\item If  $\rrev({\bf B}^{*}_{r+1})$ contains some graph in ${\cal B}$ then we assign 
to ${\bf B}^{*}_{r+1}$ the label $\rightarrow$.

\item If neither ${\bf B}^*_{0}$ contains some graph in ${\cal B}$
nor  $\rrev({\bf B}^*_{0})$ contains some graph in ${\cal C}$ then we assign 
to ${\bf B}^*_{0}$ the label $\leftrightarrow$.

\item If neither ${\bf B}^{*}_{r+1}$ contains some graph in ${\cal C}$ 
nor    $\rrev({\bf B}^{*}_{r+1})$ contains some graph in ${\cal B}$ then we assign 
to ${\bf B}^{*}_{r+1}$ the label $\leftrightarrow$.
\end{itemize}

\begin{lemma}
\label{tplerr}
Let $G\in {\cal Q}$ and let ${\bf B}^{*}_0,{\bf B}_{1}^{*},\ldots,{\bf B}_{r}^{*},{\bf B}^{*}_{r+1}$ 
be the extended  blocks of $G$.
It is not possible that one of these extended blocks is labeled with $\leftarrow$ and an other 
with $\rightarrow$.
\end{lemma}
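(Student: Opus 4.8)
The plan is to argue by contradiction: assuming that two extended blocks of $G$ receive the labels $\leftarrow$ and $\rightarrow$, I will contract $G$ to one of the $177$ graphs of ${\cal D}^{1}$, contradicting Lemma~\ref{goinaldirs}.iii.

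So suppose ${\bf B}_{p}^{*}$ is labelled $\leftarrow$ and ${\bf B}_{q}^{*}$ is labelled $\rightarrow$. First, $p\ne q$: a central extended block cannot carry both labels by Lemma~\ref{cnrptal2}, nor can an extremal one by Lemma~\ref{tboths}. The ordering ${\bf B}_{0}^{*},\ldots,{\bf B}_{r+1}^{*}$ reflects a choice of orientation of the spine path fixed in Lemma~\ref{lem:atmost2nonfuns}.4; by possibly reversing that orientation we may assume that the $\leftarrow$-labelled block precedes the $\rightarrow$-labelled one, the opposite situation being treated symmetrically (with the roles of ${\cal B}$ and ${\cal C}$, hence of ${\cal O}_{11}$ and ${\cal O}_{12}$, interchanged). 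Hence assume $p<q$, and among all such conflicting pairs pick one with $q-p$ minimum. Every ${\bf B}_{j}^{*}$ with $p<j<q$ lies in $\{{\bf B}_{1}^{*},\ldots,{\bf B}_{r}^{*}\}$, so it is a central extended block, hence labelled $\leftarrow$, $\rightarrow$, or $\leftrightarrow$; if it were labelled $\rightarrow$ (resp. $\leftarrow$) then $(p,j)$ (resp. $(j,q)$) would be a conflicting pair with a strictly smaller gap. So every intermediate extended block is labelled $\leftrightarrow$.

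Next I would discard the irrelevant part of $G$. Using the partition $\{E(B_{0}^{*}),E(F_{1}),E(B_{1}^{*}),\ldots,E(B_{r+1}^{*})\}$ of $E(G)$ recorded just before Lemma~\ref{cnrptal}, contract the connected subgraph of $G$ spanned by the spine blocks $B_{p+1},\ldots,B_{q-1}$ together with the extended fans attached along them to a single vertex $v$ (this identifies $c_{p+1}$ with $c_{q}$, and is empty when $q=p+1$); contract everything of $G$ lying to the left of $B_{p}$ into $c_{p}$ (nothing to contract when $p=0$); contract everything to the right of $B_{q}$ into $c_{q+1}$ (nothing when $q=r+1$); and contract each remaining extended fan into the cut-vertex bearing it. These operations act on pairwise disjoint edge sets, so the result $G^{\flat}$ is a contraction of $G$ which is the union of a contraction of $B_{p}^{*}$ and a contraction of $B_{q}^{*}$ sharing exactly the vertex $v$, carrying at most a short pendant structure at the images of $c_{p}$ and of $c_{q+1}$. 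By the labels, either $\rrev({\bf B}_{p}^{*})$ (if $p\ge 1$) contracts to a graph of ${\cal L}$, or ${\bf B}_{0}^{*}$ (if $p=0$) to a graph of ${\cal B}$, in each case with the root in question mapped to $v$; symmetrically, either ${\bf B}_{q}^{*}$ (if $q\le r$) contracts to a graph of ${\cal L}$, or $\rrev({\bf B}_{r+1}^{*})$ (if $q=r+1$) to a graph of ${\cal B}$, with the root mapped to $v$.

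It then remains to carry out a finite inspection, in the style of the proofs of Lemmas~\ref{tboths} and~\ref{cnrptal2}, over the four possibilities for $(p,q)$ (each of $p,q$ being extremal or not) and, inside each, over the at most $12\times 12$ choices of the two witnessing rooted graphs of ${\cal L}\cup{\cal B}\cup{\cal C}$. In the cleanest case $p=0$, $q=r+1$ the two witnesses are members of ${\cal B}$ glued at the common root $v$, so $G^{\flat}$ contracts to a member of ${\cal O}_{11}$ — the contradiction we seek. In the other three cases at least one witness is a doubly rooted ${\cal L}$-graph, and once its ``far'' root has been absorbed into the pendant structure one checks that $G^{\flat}$ contracts further to a member of ${\cal D}^{1}$ — typically in ${\cal O}_{11}$, ${\cal O}_{12}$, or the family ${\cal O}_{6},\ldots,{\cal O}_{9}$ — or else that the assumed configuration forces a block of $G$ with at least four cut-vertices, contradicting Lemma~\ref{basic_structure}.4. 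The main obstacle is exactly this last analysis when both ${\bf B}_{p}^{*}$ and ${\bf B}_{q}^{*}$ are central: because the obstructions in ${\cal L}$ are rooted on two vertices, identifying $c_{p+1}$ with $c_{q}$ leaves a graph in which $v$ is a cut-vertex supporting an ${\cal L}$-obstruction on each side while the two far roots survive only as pendant paths, and one has to confirm — over the finitely many glued graphs thus produced — that each still contracts into ${\cal D}^{1}$; here one must also keep the contractions of $B_{p}^{*}$, of $B_{q}^{*}$ and of the remainder of $G$ simultaneously realisable (guaranteed by the edge partition above) and dispatch the degenerate sub-cases $q=p+1$, $p=0$, and $q=r+1$.
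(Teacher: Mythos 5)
Your overall strategy coincides with the paper's: turn the two conflicting labels into two ``directional'' witnesses meeting at a single cut-vertex and recognise the resulting glued graph as a member of ${\cal O}_{11}$ or ${\cal O}_{12}$, contradicting Lemma~\ref{goinaldirs}.iii. The preliminary observations (a single extended block cannot carry both labels, the reversal symmetry, the choice of a closest conflicting pair) are fine, though the minimal-gap choice is not actually needed once the middle is contracted. The genuine gap is in your reduction step. The families ${\cal O}_{11}$ and ${\cal O}_{12}$ are built by identifying the roots of two graphs of ${\cal B}$, respectively of ${\cal C}$ --- \emph{not} of two graphs of ${\cal L}$. When a conflicting block ${\bf B}^{*}_{j}$ is central, its label only supplies a doubly rooted ${\cal L}$-witness $(H,\{v\},\{u\})$, and promoting this to a ${\cal B}$- or ${\cal C}$-witness requires \emph{keeping} the non-fan part of $G$ that hangs off the far root $u$ (i.e.\ off the end of the spine beyond $c_{j}$ or $c_{j+1}$): this is precisely the role of the paper's observation that the whole rooted graph $(G[V(G)\setminus(V(B^{*}_{1})\cup\cdots\cup V(B^{*}_{i-1}))],\emptyset,\{c_{i}\})$ contracts to a graph in the third and fourth columns of ${\cal B}$. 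Your instruction to ``contract everything of $G$ lying to the left of $B_{p}$ into $c_{p}$'' (and symmetrically beyond $B_{q}$ into $c_{q+1}$) destroys exactly that structure; it is also inconsistent with your later assertion that a ``short pendant structure'' survives at those vertices, since after such a contraction nothing survives there.

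As a consequence the concluding ``finite inspection'' cannot be closed in the form you set it up whenever at least one of the two conflicting blocks is central. An ${\cal L}$-witness only certifies $\cmp(H,\{v\},\{u\})\geq 3$; once $u$ is a free vertex with nothing attached, the obstruction dissolves (a search of that piece may simply begin at $u$). Concretely, your $G^{\flat}$, after contracting each side down to its witness, is a \emph{proper} contraction of the intended member of ${\cal O}_{11}$ (or ${\cal O}_{12}$) --- one recovers that member by re-attaching a non-fan at the bare far root --- and since by Lemma~\ref{sub_dir} every graph of ${\cal D}^{1}$ is contraction-minimal with $\cmp\geq 3$, every proper contraction of it has $\cmp\leq 2$ and, $\cmp$ being contraction-closed (Lemma~\ref{lem:clos}), contains no graph of ${\cal D}^{1}$ at all. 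So in those cases $G^{\flat}$ provably yields no contradiction; only your cleanest case $p=0$, $q=r+1$ goes through as written. The repair is to define the two halves as the paper does: take everything of $G$ weakly to the left of $c_{p+1}$ rooted as $(\,\cdot\,,\emptyset,\{c_{p+1}\})$ and everything weakly to the right of $c_{q}$ rooted as $(\,\cdot\,,\emptyset,\{c_{q}\})$, show that each contracts to a member of ${\cal B}$ (respectively ${\cal C}$ in the reversed situation) by combining the ${\cal L}$-witness with the non-fan components beyond its far root, and only then contract the middle to identify $c_{p+1}$ with $c_{q}$ and land in ${\cal O}_{11}$ (respectively ${\cal O}_{12}$).
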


\begin{proof} 
We distinguish two cases according to the labelling of ${\bf B}_{0}^{*}$:\\
 
\noindent{\em Case 1:} Suppose that ${\bf B}_{0}^{*}$ is labeled $\leftarrow$ 
and that ${\bf B}_{i}^{*}$, for some $i\in\{1,\ldots,r+1\}$, is labeled $\rightarrow$. According to their 
respective labels, $(B_{0}^{*},\emptyset,\{c_1\})$
can be contracted to a graph in $\cal B$ and,  if $i\leq r$,
 ${\bf rev}({\bf B}_{i}^{*})=(B_{i}^{*}, \{c_{i+1}\},\{c_{i}\})$ can be contracted to a graph in 
${\cal L}$, otherwise ${\bf rev}({\bf B}_{r+1}^{*})=(B_{r+1}^{*},\emptyset,\{c_{r+1}\})$ can 
be contracted to a rooted graph in ${\cal B}$.
Notice that if  $i\leq r$, $G[V(G)\setminus (V(B_{1}^{*})\cup\cdots\cup 
V(B_{i-1}^{*}))],\emptyset,\{c_i\}\}$ can be contracted to a graph in the third and forth columns of 
$\cal B$. By further contracting all edges of $E(B_{1}^{*})\cup \cdots\cup E(B_{i-1}^{*})$ we obtain 
a graph in ${\cal O}_{11}$, a contradiction.\\

\noindent{\em Case 2:} Suppose now that ${\bf B}_{0}^{*}$ is labeled $\rightarrow$ 
and that ${\bf B}_{i}^{*}$, for some $i\in\{1,\ldots,r+1\}$, is labeled $\leftarrow$. According to their 
respective labels, $\rrev({\bf B}_{0})^{*}=(B_{0}^{*},\{c_1\},\emptyset)$
can be contracted to a graph in $\cal C$ and,  if $i\leq r$, 
$(B_{i}^{*}, \{c_{i}\},\{c_{i+1}\})$ can be contracted to a graph in 
$\cal L$, otherwise $(B_{r+1}^{*}, \{c_{r+1}\},\emptyset)$ can be contracted to a graph in
${\cal C}$. 
Notice that if  $i\leq r$, $G[V(G)\setminus (V(B_{1}^{*})\cup\cdots\cup 
V(B_{i-1}^{*}))],\{c_i\},\emptyset\}$ can be be contracted to a graph in the first column of $\cal C$. 
By further contracting all edges of $E(B_{1}^{*})\cup \cdots\cup E(B_{i-1}^{*})$ we obtain 
a graph in ${\cal O}_{12}$, a contradiction.
\end{proof}

\subsection{Putting things together}
\begin{lemma}
\label{tll0o}
${\cal Q}=\emptyset$.
\end{lemma}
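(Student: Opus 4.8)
The plan is to assume, towards a contradiction, that some $G\in{\cal Q}$ exists, and to show that nevertheless $\cmp(G)\le 2$, which contradicts part~(i) of Lemma~\ref{goinaldirs}. By Lemma~\ref{lem:atmost2nonfuns} the graph $G$ has at least one central cut-vertex and a well-defined spine, so I would form its spine blocks $B_0,B_1,\ldots,B_r,B_{r+1}$, the extended blocks ${\bf B}_0^*,\ldots,{\bf B}_{r+1}^*$ and the extended fans ${\bf F}_1,\ldots,{\bf F}_{r+1}$. Recall that the edge sets of all these rooted graphs partition $E(G)$ and that $G$ is, with the stated rootings, the glue (taken in spine order) of ${\bf B}_0^*$, the extended fans ${\bf F}_i$ and the extended central blocks ${\bf B}_i^*$, terminating with ${\bf B}_{r+1}^*$; this glue is a rooted graph with empty in- and out-root sets. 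Each extended fan ${\bf F}_i$ is a glue of the members of ${\cal R}_G^{(c_i)}$, each of which is a fan by Lemma~\ref{lem:interfdans}, so $\cmp({\bf F}_i)\le 2$ by Lemmata~\ref{seasrchfun} and~\ref{glue}; hence only the extended blocks can possibly push $\cmp(G)$ above $2$.

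Next I would use the labels carried by the extended blocks together with Lemma~\ref{tplerr}, which forbids one extended block from being labelled $\leftarrow$ while another is labelled $\rightarrow$. Hence either every extended block is labelled within $\{\rightarrow,\leftrightarrow\}$, or every extended block is labelled within $\{\leftarrow,\leftrightarrow\}$. Reversing each rooted graph of the glue and reversing the order of the glue produces another rooted-graph presentation of $G$ with empty root sets and interchanges the two situations, so up to this symmetry I may assume we are in the one for which the corresponding linear sweep of the spine is the ``consistent'' one: every extended block can then be placed into the glue in an orientation in which it is not a contraction of any forbidden rooted graph --- of ${\cal L}$ for the central blocks (Lemma~\ref{cnrptal}), of ${\cal B}$ for ${\bf B}_0^*$ (Lemma~\ref{ro4hk}), of ${\cal C}$ for ${\bf B}_{r+1}^*$ (Lemma~\ref{moreli}) --- and therefore has $\cmp$ at most $2$. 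Here Lemma~\ref{tboths} is what guarantees that the orientations forced at the two extremal blocks are compatible with the chosen direction of the sweep, and Lemma~\ref{cnrptal2} is what licenses re-orienting a central block. Substituting all of these cost-$\le 2$ rooted graphs into Lemma~\ref{glue} yields $\cmp(G)=\cmp(G,\emptyset,\emptyset)\le 2$, where Lemma~\ref{emptymake} is used to discard a non-empty root set should one appear at an intermediate step. This contradiction with Lemma~\ref{goinaldirs}.i shows ${\cal Q}=\emptyset$.

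The step I expect to be the main obstacle is the one hidden in the second paragraph: verifying that the dichotomy of Lemma~\ref{tplerr} is exactly the dichotomy ``the left-to-right sweep of the spine is feasible'' versus ``the right-to-left sweep is feasible''. This means matching the meaning of the labels $\leftarrow,\rightarrow$ on a central block ${\bf B}_i^*$ (where $\rightarrow$ refers to ${\bf B}_i^*$ being contractible to a member of ${\cal L}$, equivalently, via Lemma~\ref{cnrptal2}, to $\rrev({\bf B}_i^*)$ having $\cmp$ at most $2$) with their meaning on the two extremal blocks (where $\leftarrow$ refers to ${\bf B}_0^*$, resp.\ ${\bf B}_{r+1}^*$, containing a member of ${\cal B}$, resp.\ ${\cal C}$), and then checking the degenerate cases separately: $r=0$, where only the two extremal blocks meeting at the single central cut-vertex remain; spine blocks with just one cut-vertex; and spine blocks carrying a light cut-vertex, where the relevant extended block is itself a glue of a block rooted graph, a hair block and two trivial edge-rooted graphs and is handled exactly as in the proofs of Lemmata~\ref{cnrptal}, \ref{ro4hk} and~\ref{moreli}.
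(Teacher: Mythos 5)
Your proposal follows essentially the same route as the paper's proof: assume $G\in{\cal Q}$, use Lemma~\ref{tplerr} (reversing the spine order if necessary) to orient all extended blocks consistently as $\rightarrow$ or $\leftrightarrow$, invoke Lemmata~\ref{ro4hk}, \ref{cnrptal} and~\ref{moreli} to conclude $\cmp\le 2$ for each extended block, and glue everything (blocks and extended fans) via Lemma~\ref{glue} to contradict Lemma~\ref{goinaldirs}.i. The details you flag as the main obstacle are handled in the paper exactly as you anticipate, so the argument is correct as proposed.
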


\begin{proof} 
Suppose in contrary that ${\cal Q}$ contains some graph $G$. Let 
${\bf B}_{0}^{*},{\bf B}_{1}^{*},\ldots,{\bf B}_{r}^{*},$ ${\bf B}^{*}_{r+1}$ 
and  ${\bf F}_{1},\ldots,{\bf F}_{r+1}$ be the extended blocks and fans of $G$, respectively.
From Lemma~\ref{tplerr}, we can assume that the extended blocks of $G$ 
are all labeled either $\rightarrow$ or $\leftrightarrow$ (if this is not the case, 
just reverse the ordering of the blocks).
By the labelling of ${\bf B}_{0}^{*}$,
none of the rooted graphs in the set ${\cal B}$ is a contraction of ${\bf B}_{0}^{*}$ therefore, 
from Lemma~\ref{ro4hk}, 
$\cmp({\bf B}_{0}^{*})\leq 2$. Also as none of the rooted graphs ${\bf B}_{i}^{*}$, $i=1,\ldots,r,$ 
can be contracted to a graph 
in ${\cal L}$, from Lemma~\ref{cnrptal}, it follows that $\cmp({\bf B}_{i}^{*})\leq 2$. 
We distinguish two cases according to the labelling of ${\bf B}_{r+1}^{*}$.
 If the labelling is $\leftrightarrow$, then ${\bf B}_{r+1}^{*}$ cannot be contracted to a graph 
 in ${\cal C}$. If the labelling is $\rightarrow$, then $\rrev({\bf B}_{r+1}^{*})$ can be contracted to
  a graph in ${\cal B}$ and, according to Lemma~\ref{tplerr}, 
${\bf B}_{r+1}^{*}$ cannot be contracted to a graph in ${\cal C}$. Thus, in both cases, from 
Lemma~\ref{moreli},  
$\cmp({\bf B}_{r+1}^{*})\leq 2$. Notice that 
$(G,\emptyset,\emptyset)={\bf glue}({\bf B}_{0}^{*},{\bf F}_{1},{\bf B}_{1}^{*},\ldots,
{\bf F}_{r},{\bf B}_{r}^{*},{\bf F}_{r+1},{\bf B}_{r+1}^{*})$ and, from Lemma~\ref{glue}, 
$\cmp(G,\emptyset,\emptyset)\leq 2$.
This implies that $\cmp(G)\leq 2$, a contradiction to the first property of Lemma~\ref{goinaldirs}.
\end{proof}

\begin{corollary}
\label{cor:cs-obs}
$\obs_{\preceq}({\cal G}[\cms,2])=\obs_{\preceq}({\cal G}[\cs,2])$.
\end{corollary}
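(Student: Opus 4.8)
The plan is to reduce the statement to a single fact about the $177$ graphs of ${\cal D}^{1}$. Setting $S^{\rm in}=S^{\rm out}=\emptyset$ in Lemma~\ref{equivalence} gives $\cms(G)=\cmp(G)$ for every graph $G$, so Lemmata~\ref{sub_dir} and~\ref{tll0o} yield $\obs_{\preceq}(\mathcal{G}[\cms,2])=\obs_{\preceq}(\mathcal{G}[\cmp,2])={\cal D}^{1}$. Since $\obs_{\preceq}(\cdot)$ depends only on the underlying graph class, it suffices to prove $\mathcal{G}[\cms,2]=\mathcal{G}[\cs,2]$. One inclusion is immediate: every monotone connected search is in particular a connected search, hence $\cs(G)\le\cms(G)$ for all $G$ and therefore $\mathcal{G}[\cms,2]\subseteq\mathcal{G}[\cs,2]$.

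For the converse inclusion I would argue by contradiction: suppose $G$ satisfies $\cs(G)\le 2$ but $\cms(G)\ge 3$. Both parameters are closed under $\preceq$ (the former because $\cms=\cmp$ by Lemma~\ref{equivalence} and $\cmp$ is $\preceq$-closed by Lemma~\ref{lem:clos}, the latter by Lemma~\ref{lem:cs-clos}). Since $\preceq$ is well-founded on finite graphs (a proper contraction strictly decreases the number of vertices) and transitive, the nonempty family of contractions of $G$ lying outside $\mathcal{G}[\cms,2]$ has a $\preceq$-minimal element $H$; by minimality and transitivity, every proper contraction of $H$ lies in $\mathcal{G}[\cms,2]$, so $H$ is $\preceq$-minimal outside that class, i.e. $H\in\obs_{\preceq}(\mathcal{G}[\cms,2])={\cal D}^{1}$. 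On the other hand $H\preceq G$, so Lemma~\ref{lem:cs-clos} gives $\cs(H)\le\cs(G)\le 2$. Thus everything collapses to the claim that $\cs(H)\ge 3$ for every $H\in{\cal D}^{1}$: this contradicts $\cs(H)\le 2$ and establishes $\mathcal{G}[\cs,2]\subseteq\mathcal{G}[\cms,2]$, hence equality of the classes and of their obstruction sets.

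It remains to justify $\cs(H)\ge 3$ for each $H\in{\cal D}^{1}$. Note that $\cs(H)\le\cms(H)=\cmp(H)$ and $\cmp(H)\ge 3$, so the claim says precisely that on these $177$ graphs even non-monotone connected searches require three searchers, i.e. monotonicity costs nothing for them. Being a finite family of small graphs, this can be checked by inspection, exactly as in Lemma~\ref{sub_dir}. Structurally, every $H\in{\cal D}^{1}$ is obtained either by gluing at a single vertex three or more rooted pieces none of which is a fan (as in ${\cal O}_{10},{\cal O}_{11},{\cal O}_{12}$) or by forcing an incompatible pair of search directions; in each case a connected search keeping two of the relevant pieces clean cannot enter the third without a third searcher, because a recontamination move with only two searchers on the board cannot shrink the frontier that must be guarded, and connectivity forbids cleaning a detached piece first. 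I expect this last point --- ruling out recontamination-assisted connected searches with two searchers on the obstructions, where (unlike the monotone case handled through expansions) there is no normal form to reduce to --- to be the only genuine obstacle; once it is in place, the contradiction in the previous paragraph goes through and $\obs_{\preceq}(\mathcal{G}[\cms,2])=\obs_{\preceq}(\mathcal{G}[\cs,2])$ follows.
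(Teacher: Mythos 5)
Your proposal is correct and follows essentially the same route as the paper: both arguments hinge on the finite verification that $\cs(H)\geq 3$ for every $H$ in ${\cal D}^{1}$ (which the paper likewise dispatches with ``it is easy to check''), combined with $\cs\leq\cms$ and the contraction-closure of $\cs$ from Lemma~\ref{lem:cs-clos}. Packaging the conclusion as equality of the classes ${\cal G}[\cms,2]={\cal G}[\cs,2]$ rather than arguing directly on a $\preceq$-minimal counterexample in $\obs_{\preceq}({\cal G}[\cs,2])$ is only a cosmetic difference, and your explicit flagging of the non-monotone, recontamination-assisted searches as the real content of the inspection step is a fair (indeed more candid) account of what that check involves.
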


\begin{proof}
It is easy to check that for every $H\in \obs_{\preceq}({\cal G}[\cms,2])$:
\begin{enumerate}
\item $\cs(H)\geq 3$,
\item for every proper contraction $H'$ of $H$ it holds that $\cs(H')\leq 2$, 
\end{enumerate}
therefore $\obs_{\preceq}({\cal G}[\cms,2])\subseteq \obs_{\preceq}({\cal G}[\cs,2])$.

If there exist a graph $H\in \obs_{\preceq}({\cal G}[\cs,2])\setminus\obs_{\preceq}({\cal G}[\cms,2])$,
then $\cs(H)$ $\geq 3$. Notice that the connected search number of a graph is always bounded 
from the
monotone and connected search number, {as a complete monotone and connected search strategy 
is obviously a complete connected search strategy, therefore} $\cms(H)\geq 3$, which means 
that there 
exist a graph $H'\in \obs_{\preceq}({\cal G}[\cms,2])$ such that $H'\preceq H$. Furthermore, 
since $H'$ is 
a proper contraction of $H$, according to Lemma~\ref{lem:cs-clos}, $\cs(H)\geq\cs(H')$. 
As we have already stressed that $\cs(H')\geq 3$ we reach a 
contradiction to the minimality (with respect of $\preceq$) of $H$.
\end{proof}

\section{General Obstructions for $\cms$} 

As we mentioned before, for $k>2$, we have no guarantee that the set 
$\obs_{\preceq}({\cal G}[\ cmms,k])$
 is a finite set. In this section we  prove that when this set is finite its size should be 
 double exponential in $k$. Therefore, it seems hard to
extend our results for $k\geq 3$ as, even if we somehow manage to prove that the 
obstruction set for a specific $k$ is finite, then this set would contain more than 
$2^{2^{\Omega(k)}}$ graphs.

We will describe a procedure that generates, for each $k$, a set of at least 
$\frac{4}{3}(\frac{5}
{2})^{3\cdot 2^{k-2}}$ non-isomorphic graphs that have connected and monotone search number 
$k+1$ and are  contraction-minimal
with respect to this property. Hence, these $\frac{4}{3}(\frac{5}{2})^{3\cdot 2^{k-2}}$ graphs will belong 
to $\obs_{\preceq}({\cal G}[\cms,k])$.

We define for every $k\geq1$  a set of rooted graphs, namely the set of  
{\em obstruction-branches} denoted ${\bf Br}(k)$, as follows:\medskip

\noindent {\bf For $k=1$:} The set ${\bf Br}(1)$ consists of the five graphs of 
Figure~\ref{FanRootObs} rooted at $v$.\medskip

\noindent {\bf For $k=l>1$:} The set ${\bf Br}(l)$ is constructed by choosing two {\em branches} 
of the set ${\bf Br}(l-1)$ and identify the two roots to a single vertex, say $v$. Then we add a 
new edge with $v$ as an endpoint, say $\{u,v\}$, and we root this branch to $u$. We will refer 
to this edge as the {\em trunk} of the branch.\medskip

Let $f(k)$ be the number of branches of ${\bf Br}(k)$. Notice that $f(1)=5$ and $f(k)$ is equal 
to the number of ways we can pick two branches of ${\bf Br}(k-1)$, with repetition. Therefore:

\begin{eqnarray}
f(k)&=&{f(k-1)+2-1 \choose 2}=  {f(k-1)+1 \choose 2}\nonumber \\
&=&\frac{f(k-1)^2+f(k-1)}{2}\geq \frac{f(k-1)^2}{2}\nonumber \\
&\geq&\frac{\big(\frac{f(k-2)^2}{2}\big)^2}{2}=\frac{f(k-2)^{2^2}}{2^{2+1}}\geq\frac{f(k-3)^{2^3}}
{2^{2^2+2+1}}\nonumber \\
&\geq&\cdots\geq \frac{f(1)^{2^{k-1}}}{2^{2^{k-2}+\cdots+2+1}}=\frac{5^{2^{k-1}}}{2^{2^{k-1}}}
=2\Big(\frac{5}{2}\Big)^{2^{k-1}}\nonumber
\end{eqnarray}

Let ${\cal O}_{\bf Br}(k)$ be the set containing the graphs obtained by choosing three rooted 
branches
of ${\bf Br}(k)$, with repetitions, and identify the three roots. {Notice that any two such 
 selections produce two non-isomorphic graphs.} We are going to prove that 
${\cal O}_{\bf Br}(k)\subseteq 
\obs_{\preceq}({\cal G}[\cms,k+1])$. Notice that:

\begin{eqnarray}
|{\cal O}_{\bf Br}(k)|&=&{f(k)+3-1 \choose 3}={f(k)+2 \choose 3}\nonumber \\
&=&\frac{(f(k)+2)(f(k)+1)f(k)}{6}\nonumber\\
&=&\frac{f(k)^3+3f(k)^2+2f(k)}{6}\geq \frac{f(k)^3}{6}\nonumber\\
&\geq&\frac{4}{3}\Big(\frac{5}{2}\Big)^{3\cdot2^{k-1}}\nonumber
\end{eqnarray}

\begin{figure}
\begin{center} 
\scalebox{0.8}{\includegraphics{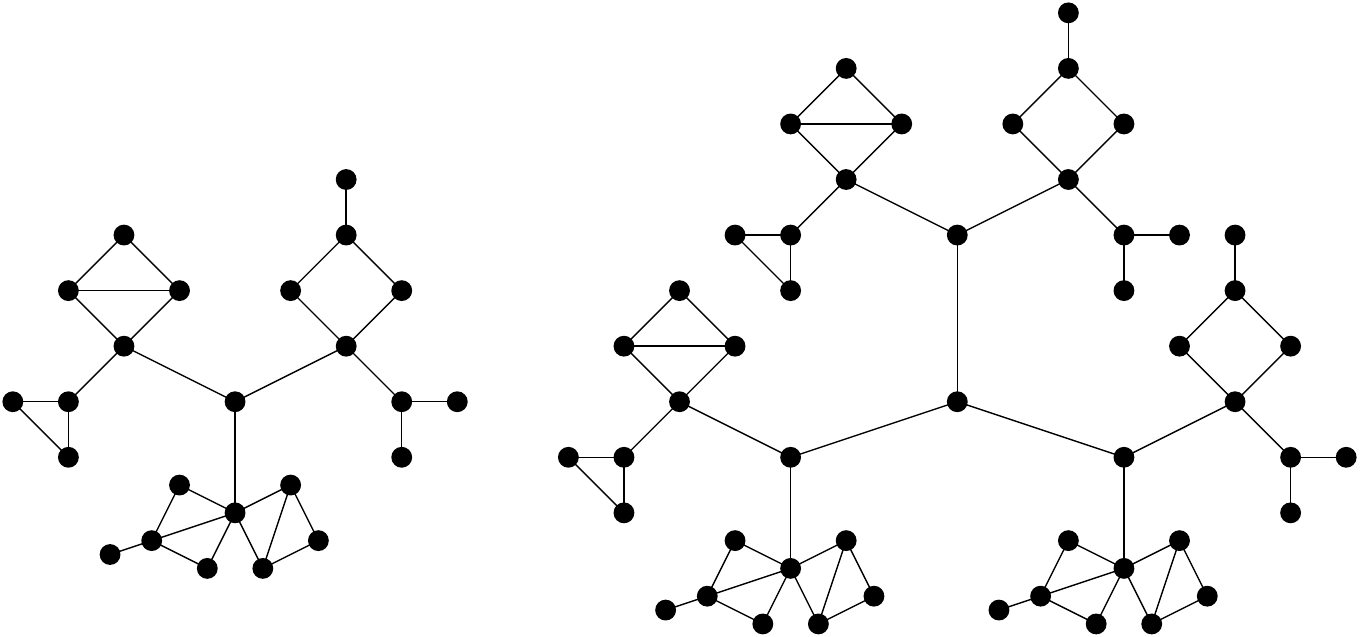}}
\end{center}
\caption{The left graph belongs to ${\cal O}_{\bf Br}(2)$ and the right to ${\cal O}_{\bf Br}(3)$.} 
\label{brobstr} 
\end{figure}

Hence the cardinality of $\obs_{\preceq}({\cal G}[\cms,k])$ is at least 
$\frac{4}{3}\Big(\frac{5}{2}\Big)^{3\cdot2^{k-2}}$. In order to prove this we need the 
following Lemmata.

\begin{lemma}\label{boundLem0}
Let $B\in {\bf Br}(k)$ and let $v$ be its root. There does not exist a complete 
monotone and connected search strategy for $B$ 
that uses $k$ searchers, such that  the first edge being cleaned is the trunk of $B$.
\end{lemma}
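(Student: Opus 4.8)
The plan is to argue by induction on $k$. For the base case $k=1$, the branches in ${\bf Br}(1)$ are the five graphs of Figure~\ref{FanRootObs} rooted at $v$, and a complete monotone connected search with a single searcher amounts to cleaning the edges along a single path (one searcher can only slide along edges while keeping the cleaned region connected, and monotonicity forbids backtracking). Since each of the five graphs has a vertex of degree at least $3$, once the unique searcher reaches such a vertex having cleaned one incident edge, it cannot continue to clean the remaining two incident edges monotonically; more to the point, starting at the trunk edge forces the searcher to enter the root $v$, which has degree $\ge 3$ in each branch, and it can never clean all edges at $v$ with only one searcher. So no such strategy exists, establishing $k=1$.

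For the inductive step, suppose the statement holds for $k-1$ and let $B\in{\bf Br}(k)$ with trunk $\{u,v\}$, where $v$ is obtained by identifying the roots of two branches $B'_1,B'_2\in{\bf Br}(k-1)$. Suppose for contradiction that there is a complete monotone connected search ${\cal S}$ of $B$ using $k$ searchers that cleans the trunk first. Cleaning the trunk first means the searcher enters $v$; at the first moment the cleaned region is exactly $\{u,v\}$ (after placing on $u$, placing on $v$ or sliding across). Since the cleaned region must stay connected and monotone, from this point on the search proceeds by expanding the cleaned set within $B'_1\cup B'_2$, always containing $v$. The key combinatorial observation — which I would phrase in terms of the expansion formalism of Lemma~\ref{equivalence}, replacing ${\cal S}$ by an $(\emptyset,\emptyset)$-expansion of the same cost — is that there is a first position $i$ at which the cleaned set $A_i$ first meets \emph{both} $B'_1$ and $B'_2$ nontrivially, i.e.\ contains an edge of each; at the position just before that, the cleaned region lies entirely in one branch, say $B'_1$, together with the trunk. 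At that moment $v$ is a boundary vertex (it still has an uncleaned incident edge in $B'_2$), so at most $k-1$ searchers are available to guard the boundary \emph{inside} $B'_1$, and the restriction of the expansion to $E(B'_1)\cup\{$trunk$\}$ is a connected monotone expansion of $B'_1$ that has already cleaned the trunk of $B'_1$ first and uses at most $k-1$ searchers — contradicting the induction hypothesis applied to $B'_1$.

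The main obstacle is making the ``at most $k-1$ searchers inside $B'_1$'' step precise: one must verify that $v$ genuinely occupies one of the $k$ searcher slots throughout the phase in which only $B'_1$ is being cleaned, and that the remaining searchers, restricted to the $B'_1$-side boundary, really do certify a cost-$(k-1)$ connected monotone search of the rooted branch $B'_1$ with its trunk cleaned first. This is where I would lean on the equivalence ${\bf cmms}={\bf cmp}$ (Lemma~\ref{equivalence}) and on the fact that $\partial_B(A_i)\cap V(B'_1)$ together with $v$ is exactly the boundary relevant to $B'_1$, so that dropping $v$ (which is forced to carry a searcher because of the uncleaned $B'_2$-edge) leaves a valid expansion of $B'_1$ of cost one less. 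A secondary point to handle carefully is the symmetric situation where the search enters $B'_2$ first instead of $B'_1$; by the symmetry of the construction the same argument applies verbatim, so no generality is lost. Once these bookkeeping details are in place, the induction closes.
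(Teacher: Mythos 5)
Your overall plan --- induction on $k$, a searcher pinned at the vertex where the two $(k-1)$-level sub-branches meet, and a restriction of the search to one sub-branch that contradicts the induction hypothesis --- is exactly the paper's, and your base case and your ``the pinned searcher does not count towards the sub-branch's boundary'' bookkeeping are fine in spirit. The genuine gap is in how you invoke the induction hypothesis. You anchor the argument at the \emph{first} position $i$ at which the cleaned set contains an edge of both $B_1'$ and $B_2'$, and you apply the hypothesis to the branch $B_1'$ that is \emph{entered} first. But the hypothetical strategy may interleave the two sub-branches: it can clean the trunk of $B_2'$ while $B_1'$ still has dirty edges. In that case the restriction of the expansion to $E(B_1')$ up to position $i$ is \emph{not} a complete expansion of $B_1'$, so it cannot contradict the induction hypothesis (a partial trunk-first sweep of $B_1'$ with $k-1$ searchers is entirely consistent with it), while the restriction of the \emph{whole} expansion to $E(B_1')$ need not have cost $k-1$: once both sub-trunks are clean the meeting vertex leaves the boundary, and if $B_2'$ is finished before $B_1'$ then in the final phase all $k$ searchers may sit inside $B_1'$. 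Your cost bound is only justified ``at that moment'', i.e.\ at position $i-1$, whereas the induction hypothesis requires it at \emph{every} position of a \emph{complete} expansion of one sub-branch.

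The repair is to apply the induction hypothesis to the sub-branch that is \emph{completely cleaned first}, say $B_j'$ (the two cannot finish at the same move, since the edges cleaned by one move are all incident to a single vertex, and neither branch can have its trunk as its last dirty edge). Throughout the cleaning of $B_j'$ the other sub-branch $B_{3-j}'$ still has a dirty edge, so at every such position either its trunk is dirty --- and then the meeting vertex lies in the boundary of the cleaned set in $B$ but not in the boundary computed inside $B_j'$ --- or its trunk is clean and $B_{3-j}'$ has both a clean and a dirty edge --- and then the boundary contains a vertex of $B_{3-j}'$ distinct from the meeting vertex. Either way the boundary inside $B_j'$ has size at most $k-1$ at every position, the pendant-edge correction causes no trouble (the only edge pendant in $B_j'$ but not in $B$ is its trunk, which is the first edge of the restricted expansion), and the restriction to $E(B_j')$ is a complete connected monotone trunk-first expansion of $B_j'$ of cost at most $k-1$, contradicting the induction hypothesis. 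The paper's own proof is equally terse here (it asserts a searcher occupies the meeting vertex ``during each step'', which is literally true only until both sub-trunks are clean), but the interleaving case is precisely the part of the induction that has to be argued, and your write-up, which flags other bookkeeping issues, does not address it.
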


\begin{proof}
We are going to prove this by induction. We can easily check that for $k=1$ the claim holds. 
Let $B\in {\bf Br}(k)$ and let $v$ be its root and $u$ the other endpoint of the trunk. 
Since we are forced to clean $B$, in a connected and monotone manner, 
with a search strategy, say $\cal S$, that first cleans $\{u,v\}$, a searcher must be 
placed in $u$ during each step of $\cal S$, therefore we must clean a $(k-1)$-level 
branch using $k-1$ searchers that first clean the trunk of this branch, 
which contradicts the induction hypothesis.  
\end{proof}

\begin{corollary}\label{boundCor1}
Let $G\in {\cal O}_{\bf Br}(k)$, then $\cms(G)>k+1$. 
\end{corollary}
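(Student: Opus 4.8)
The plan is to argue by contradiction. Suppose $G\in{\cal O}_{\bf Br}(k)$ admits a complete monotone connected search ${\cal S}$ using only $k+1$ searchers; I will exhibit a step at which $k+2$ searchers are simultaneously on $G$. Write $G$ as three arms $B_1,B_2,B_3\in{\bf Br}(k)$ whose roots have been identified to a single vertex $w$ of degree $3$; thus $V(B_i)\cap V(B_j)=\{w\}$ for $i\ne j$, and each $B_i$ touches $w$ only through its trunk $\{w,v_i\}$. Morally the statement is just Lemma~\ref{boundLem0} with "three arms at $w$" playing the role of "two subbranches plus an incoming trunk at the junction": the third arm costs one extra searcher.

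The first ingredient I would establish is a mild strengthening of Lemma~\ref{boundLem0}, proved by the very same induction: for $B\in{\bf Br}(m)$ with root $\rho$, every complete monotone connected search of $B$ whose first cleaned edge is the trunk has a step at which at least $m+1$ searchers occupy vertices of $V(B)\setminus\{\rho\}$. Indeed, once the trunk $\{\rho,u\}$ is clean, the junction $u\ne\rho$ must stay occupied until both ${\bf Br}(m-1)$-subbranches $D,D'$ are clean; by connectivity every $D$-edge is preceded by $D$'s trunk, so applying the inductive statement to whichever of $D,D'$ becomes entirely clean first — say $D$ — gives a step with $\ge m$ searchers on $V(D)\setminus\{u\}$, while at that step $D'$ still carries a dirty edge and hence forces an additional searcher on $V(D')$, either at $u$ (if $D'$ is still entirely dirty) or at the clean/dirty frontier inside $D'$. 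Since $V(D')\cap(V(D)\setminus\{u\})=\emptyset$ and $\rho\notin V(D)\cup V(D')$, this produces $\ge m+1$ searchers on $V(B)\setminus\{\rho\}$. The base case $m=1$ is a finite inspection of the five graphs of Figure~\ref{FanRootObs}.

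The second ingredient is a connectivity observation about ${\cal S}$: the clean region is connected and starts empty, so at most one arm can have an edge other than its trunk cleaned before its trunk; hence at least two arms are "entered through their trunk". Among those, let $B$ be the arm that becomes entirely clean first, and let $B'$ be another arm entered through its trunk, so $B'$ becomes entirely clean strictly after $B$. As $B$ meets the rest of $G$ only at the cut vertex $w$, the restriction of ${\cal S}$ to the edges of $B$ is a complete, monotone, connected search of $B$ whose first cleaned edge is $B$'s trunk; by the strengthened lemma (with $m=k$) there is a step $t$ with at least $k+1$ searchers on $V(B)\setminus\{w\}$.

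To finish, note that at step $t$ the arm $B'$ still has a dirty edge, since it is cleaned only after $B$. If $B'$ also has a clean edge at step $t$, its clean/dirty frontier — which cannot be $w$, because $w$ meets $B'$ only through its trunk — carries a searcher on $V(B')\setminus\{w\}$, a set disjoint from $V(B)\setminus\{w\}$; if instead $B'$ is entirely dirty at step $t$, then $w$ is incident to the dirty trunk $\{w,v_{B'}\}$ and to the already clean trunk $\{w,v_B\}$, so $w$ carries a searcher, which is not one of the $k+1$ counted on $V(B)\setminus\{w\}$. Either way step $t$ puts $k+2$ searchers on $G$, a contradiction, so $\cms(G)>k+1$. (The degenerate case $k=1$, where the arms have no trunk, is handled separately and is already known, since ${\cal O}_{\bf Br}(1)={\cal O}_{10}\subseteq{\cal D}^1$ gives $\cms(G)=\cmp(G)\ge 3$ via Lemmas~\ref{equivalence} and~\ref{sub_dir}.) The main obstacle is the strengthening of Lemma~\ref{boundLem0}: it is precisely the fact that the $k+1$ searchers forced by one arm can be kept off the shared root $w$ that leaves room for the extra searcher imposed by the third arm, and the bulk of the work is the careful frontier/pinning bookkeeping together with the finite base case.
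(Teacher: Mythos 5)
Your proof is correct and follows essentially the same route as the paper: the paper likewise argues that the central vertex gets pinned and that some arm must then be cleaned starting from its trunk with too few searchers, contradicting Lemma~\ref{boundLem0}. Your explicit strengthening of Lemma~\ref{boundLem0} (at least $m+1$ searchers must simultaneously occupy $V(B)\setminus\{\rho\}$) is in fact what the paper implicitly relies on --- as literally stated, Lemma~\ref{boundLem0} counts the searcher pinned at the junction as one of the searchers ``on the branch,'' so the paper's invocation of it with only $k$ searchers does not quite type-check --- and your frontier/pinning bookkeeping makes that step rigorous.
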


\begin{proof}
Let $G\in {\cal O}_{\bf Br}(k)$. Notice that $G$ consists of three $k$-level obstruction branches, 
say $B_1$, $B_2$ and $B_3$. If there exist a complete monotone and connected search strategy 
$\cal S$ that uses $k+1$ searchers, then from Lemma~\ref{boundLem0} $\cal S$ cannot start 
by placing searchers in the central vertex, i.e. the vertex where $B_1$, $B_2$ and $B_3$ 
are connected. Therefore, $\cal S$ starts by placing searchers in a vertex of $B_1$, $B_2$ or 
$B_3$ and consequently the first edge cleaned belongs to this branch. Notice that the first 
time that a searcher is placed on the central vertex the connectivity and monotonicity of 
$\cal S$ force us to clean a $k$-level branch with $k$ searchers, which is impossible 
according to Lemma~\ref{boundLem0}. 
\end{proof}

\begin{lemma}\label{boundLem2}
Let $B\in {\bf Br}(k)$ and let $v$ be its root. 
\begin{itemize}
\item[a)] There exist a complete monotone and connected search strategy for $B$ that uses 
$k+2$ searchers, such that in each step a searcher occupies $v$. 
\item[b)] There exists a complete monotone and connected search strategy for $B$ 
that uses $k+1$ searchers, such that  the first edge being cleaned is the trunk of $B$.
\item[c)] There exist a complete monotone and connected search strategy for $B$ that uses 
$k+1$ searchers, such that the last edge being cleaned is the trunk of $B$. 
\end{itemize}
\end{lemma}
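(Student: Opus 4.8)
The plan is to prove all three statements simultaneously by induction on $k$, since the constructions for (a), (b) and (c) feed into each other across levels. For the base case $k=1$, each $B\in{\bf Br}(1)$ is one of the five small graphs of Figure~\ref{FanRootObs} rooted at $v$, and one verifies directly — by exhibiting an explicit expansion, or equivalently an explicit search — that: (a) holds with $3$ searchers keeping $v$ permanently occupied; (b) and (c) hold with $2$ searchers, where in (b) the search begins with the edge incident to the root and in (c) that edge is postponed to the very end. This is a finite check on five graphs.

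For the inductive step, fix $B\in{\bf Br}(l)$ with root $u$ and trunk $\{u,v\}$, where $v$ is the identification point of two sublevel branches $B',B''\in{\bf Br}(l-1)$, each rooted at $v$. I would build the three strategies as follows. For (b): first clean the trunk $\{u,v\}$ (placing a searcher on $u$, then sliding or placing to reach $v$), keep one searcher pinned on $v$, and then clean $B'$ using the strategy from part (a) of the induction hypothesis applied to $B'$ — which needs $(l-1)+2 = l+1$ searchers and keeps a searcher on $v$ (the root of $B'$) at every step; crucially, the searcher on $v$ required by (a) for $B'$ is the \emph{same} searcher that guards $v$, so the guard on $u$ plus the $(l+1)$-searcher strategy on $B'$ never coexist in a way that exceeds $l+1$ — wait, I must be careful here: the searcher on $u$ is one searcher, and cleaning $B'$ by (a) costs $l+1$ \emph{including} the one on $v$; once $B'$ is clean we can lift the $u$-guard (the trunk edge is already clean and $B'\cup\{\text{trunk}\}$ is connected and clean), freeing it, and then clean $B''$ by (a) with $l+1$ searchers reusing $v$ as its guarded root. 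So the peak is $\max\{1+(l+1),\, l+1\}$ at the wrong spot — the fix is to \emph{not} keep the $u$-guard during the cleaning of $B'$: since the trunk is the first clean edge and $v$ separates $u$ from everything else once the trunk is clean, recontamination through $u$ is impossible as long as $v$ is guarded, so the $u$-guard can be removed immediately and we stay within $l+1$. For (c): clean $B'$ first by the hypothesis, then $B''$, keeping $v$ guarded throughout (using part (a) on each, which guards $v$), then finally clean the trunk $\{v,u\}$ by one slide; the peak is $l+1$. For (a): clean the trunk first (searcher on $u$ stays forever — that is $1$ searcher), then clean $B'$ and $B''$ in turn each by part (a) of the hypothesis ($l+1$ searchers guarding $v$); the simultaneous load is $1$ (on $u$) plus $l+1$ (for the current sublevel branch, which includes the guard on $v$), giving $l+2 = (l)+2$ as required, and $u$ stays occupied at every step.

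The main obstacle I expect is the bookkeeping in the inductive step: making the counts tight by arguing carefully \emph{when} the guard on $u$ (resp.\ on $v$) may be released without causing recontamination, so that the different phases' searcher demands do not stack. The key structural fact making this work is that $v$ is a cut-vertex separating $B'$ from $B''$ (and $u$ from both), so once a phase's subgraph together with the trunk forms a connected clean region and $v$ is guarded, no recontamination is possible and the guard elsewhere is free; this is exactly the monotonicity/connectivity interplay already exploited in Lemma~\ref{glue}. A clean way to package the whole argument is in fact to phrase it through expansions and invoke Lemma~\ref{glue}: statements (a), (b), (c) say respectively that $\cmp(B,\{v\},\{v\})\le (l)+2$ — here $v$ denotes the root — $\cmp$ of $B$ with the trunk forced first is $\le l+1$, and with the trunk forced last is $\le l+1$; gluing the sublevel expansions along $v$ via Lemma~\ref{glue}, together with the trivial expansions of the single trunk edge in its two orientations (cost $1$ and $2$ as in the computations inside Lemma~\ref{ro4hk}), yields the three bounds with the stated searcher counts, and the guard conditions translate into the in-/out-root conditions of the glued rooted graphs.
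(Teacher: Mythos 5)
Your proposal is correct and follows essentially the same route as the paper: a simultaneous induction on $k$ over the recursive branch structure, in which part (a) supplies the strategy that keeps the identification vertex of the two sub-branches permanently guarded while they are cleaned in sequence, and parts (b) and (c) respectively prepend or append the cleaning of the trunk, exploiting that the root has degree one so its guard can be dropped. The only (harmless) deviation is that you invoke part (a) of the induction hypothesis for both sub-branches in (b) and (c), whereas the paper uses part (a) for one sub-branch and the corresponding part (b) or (c) for the other; both give the same searcher counts.
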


\begin{proof}
a) We are going to prove this by induction. We can easily check that for $k=1$ the claim holds. 
Let $B\in {\bf Br}(k)$ and let $v$ be its root and $u$ the other endpoint of the trunk. 
We are going to describe a search strategy $\cal S$ with the properties needed. 
We place a searcher in $v$ and  as second searcher in $u$. According to the induction 
hypothesis for each one of the two $(k-1)$-level branches connected to $u$ there exists a 
complete monotone and connected search strategy that uses $k+1$ searchers such that in 
each step a searcher occupies $u$, therefore we can continue by cleaning one of these 
$(k-1)$-level branches and then clean the other.\medskip

b) We are going to prove this by induction. For $k=1$ the claim is trivial. Let $B\in {\bf Br}(k)$ 
and let $v$ be its root and $u$ the other endpoint of the trunk. There are two $(k-1)$-level 
branches connected to $u$, say $B_1$ and $B_2$. The search strategy, say $\cal S$, 
with the properties needed is the following: we place a searcher in $v$ and then slide him to $u$. 
According to the first claim of Lemma~\ref{boundLem2} there exists a complete monotone and 
connected search strategy ${\cal S}_1$ for $B_1$ that uses $k+1$ searchers such that in each step a 
searcher occupies $u$. By the induction hypothesis there exists a complete monotone and connected 
search strategy ${\cal S}_2$ for $B_2$ that uses $k$ searchers such that the first edge cleaned is the 
trunk of $B_2$. Using these two search strategies we can start by cleaning $B_1$, keeping in all 
times a searcher in $u$, and then we can clean $B_2$.\medskip

c) We are going to prove this by induction. Notice that for $k=1$ the claim holds. Let 
$B\in {\bf Br}(k)$ and let $v$ be its root and $u$ the other endpoint of the trunk. There are two 
$(k-1)$-level branches connected to $u$, say $B_1$ and $B_2$. According to the 
induction hypothesis there exist a complete monotone and connected search strategy ${\cal S}_1$ for 
$B_1$ that uses $k$ searchers such that the last edge cleaned is the trunk of $B_1$. Moreover, 
according to the first claim of Lemma~\ref{boundLem2} there exists a complete monotone and 
connected search strategy ${\cal S}_2$ for $B_2$ that uses $k+1$ searchers such that in each step a 
searcher occupies $u$. Using these two search strategies  we can clean $B$, in a monotone and 
connected manner, as follows: we start by cleaning $B_1$ then we  clean $B_2$, keeping in all times 
a searcher in $u$, and then we clean $\{u,v\}$.
\end{proof}

\begin{lemma}\label{boundLem3}
Let $G\in {\cal O}_{\bf Br}(k)$ and $B\in {\bf Br}(k)$ one of the three branches of $G$. 
If we contract an edge of $B$ there exist a complete monotone and connected 
search strategy for $B$ that uses 
$k+1$ searchers, such that in each step a searcher occupies $v$. 
\end{lemma}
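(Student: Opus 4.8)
The plan is to prove, by induction on $k$, the following slightly stronger statement: for every $B\in{\bf Br}(k)$ with root $v$ and every edge $e$ of $B$, the graph $B/e$ admits a complete monotone and connected search strategy that uses $k+1$ searchers, keeps a searcher on the image of $v$ at every step, and whose \emph{first} cleaned edge is incident to that image. The last condition is the crucial addition: it is exactly what lets the recursive sub-strategies be concatenated without ever disconnecting the cleaned region. For $k=1$ the branch $B$ is one of the five graphs of Figure~\ref{FanRootObs}, so $B/e$ has only a handful of vertices; a direct case check shows that in each case two searchers suffice while keeping $v$ (or the vertex it is merged into) occupied and starting at an edge incident to it.

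For the inductive step let $k\ge 2$, write $B\in{\bf Br}(k)$ with trunk $\{v,v'\}$, and let $B_1,B_2\in{\bf Br}(k-1)$ be the two branches whose roots are identified at $v'$. If the contracted edge $e$ is the trunk $\{v,v'\}$ itself, then $B/e$ is $B_1\cup B_2$ with the two roots identified at the merged vertex $w$; apply the first claim of Lemma~\ref{boundLem2} to $B_1$ (cleaning it with $(k-1)+2=k+1$ searchers, keeping $w$ occupied, starting from $w$), then drop every searcher but the one on $w$, and apply the first claim of Lemma~\ref{boundLem2} to $B_2$, reusing that searcher. This uses $k+1$ searchers, keeps $w$ occupied, grows the cleaned set from $w$ (so it stays connected), and starts at an edge incident to $w$.

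If instead $e$ is an edge of $B_1$ (the case $e\in E(B_2)$ being symmetric), proceed in three phases. First put a permanent searcher on $v$ and a searcher $t$ on $v'$; this cleans the trunk $\{v,v'\}$ as the first move, an edge incident to $v$. Second, run the inductive strategy for $B_1/e$: it uses $(k-1)+1=k$ searchers, keeps a searcher on its root $v'$, and starts at an edge incident to $v'$, which already lies in the cleaned set; reusing $t$ as the searcher on $v'$, the total is $1+k=k+1$, connectivity is preserved, and monotonicity holds because $v$ and $v'$ remain guarded. Third, once $B_1/e$ is clean with a searcher still on $v'$, discard all searchers but those on $v$ and $v'$, and clean $B_2$ with the strategy of the second claim of Lemma~\ref{boundLem2}, whose first cleaned edge is the trunk of $B_2$ and hence meets $v'$; after that first move every edge incident to $v'$ is clean, so $v'$ is no longer a boundary vertex and need not be guarded, and the remaining moves stay within $1+k=k+1$ searchers. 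Now all of $B/e$ is clean, $v$ was occupied at every step, and the overall first cleaned edge was $\{v,v'\}$.

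The main obstacle is precisely this last case, and it is a budget-versus-connectivity issue. One cannot afford to clean $B_2$ with a root-guarding strategy (the first claim of Lemma~\ref{boundLem2}), since that needs $(k-1)+2=k+1$ searchers, which together with the searcher on $v$ exceeds the bound; so $B_2$ must be cleaned last with a trunk-first strategy, while $B_1/e$ must be cleaned first (so that the junction $v'$ is guarded during it) using the cheaper inductive strategy. Making all of this also respect connectivity of the cleaned set is exactly what forces the strengthened induction hypothesis that each strategy starts by cleaning an edge incident to the relevant root.
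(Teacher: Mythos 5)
Your proof is correct and follows essentially the same route as the paper's: induction on $k$, splitting into the case where the contracted edge is the trunk (handled via part (a) of Lemma~\ref{boundLem2} applied to both sub-branches) and the case where it lies in one sub-branch (guard $v$ and the junction, run the inductive strategy on the contracted sub-branch, then finish the other sub-branch with the trunk-first strategy of part (b)). Your explicit strengthening of the induction hypothesis --- that the first cleaned edge is incident to the root --- is a small technical refinement that the paper leaves implicit, and it does make the connectivity of the concatenated strategies cleaner to verify, but it does not change the underlying argument.
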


\begin{proof}
We are going to prove this by induction. It is easy to check that for $k=1$ the claim is true.
Let $v$ be the root of $B$ and $u$ the other endpoint of the trunk, $B_1$ and $B_2$ the two 
$(k-1)$-level branches connected to $u$ and $e\in E(B)$ the edge contracted. 
We distinguish to cases:\medskip\\
\noindent{\em Case 1:} $e\in E(B_1)\cup E(B_2)$. We can assume that $e$ is an edge of $B_1$. 
We are going to describe a search strategy $\cal S$ for $B$ with the properties needed. 
We place a searcher in $v$ and  a second searcher in $u$. From the induction hypothesis 
there exists a complete monotone and connected search strategy ${\cal S}_1$ for $B_1$ 
that uses $k$ searchers, such that in each step a searcher occupies $u$. Moreover, 
according to the second claim of Lemma~\ref{boundLem2} there exists a complete 
monotone and connected search strategy ${\cal S}_2$ for $B_2$ that uses $k$ searchers 
such that the first edge cleaned is the trunk of $B_2$.  Using these two search strategies 
we can start by cleaning $B_1$, keeping in all times a searcher in $u$, and then we can 
clean $B_2$. Notice that this search strategy uses $k+1$ searchers and during each step 
a searcher occupies $v$.
\medskip

\noindent{\em Case 2:} $e=\{u,v\}$. According to the first property of Lemma~\ref{boundLem2}, 
for each one of $B_1$ and $B_2$ there exists a complete monotone and connected search 
strategy that uses $k+1$ searchers such that in each step a searcher occupies $v$. 
Hence we can clean $B$ starting by cleaning $B_1$, keeping in all times a searcher in 
$v$, and then clean $B_2$.
\end{proof}

\begin{corollary}\label{boundCor2}
If $G\in {\cal O}_{\bf Br}(k)$ and $G'$ be a contraction of $G$, then $\cms(G')=k+1$. 
\end{corollary}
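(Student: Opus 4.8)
The plan is to show both $\cms(G')\le k+1$ and $\cms(G')\ge k+1$ for every proper contraction $G'$ of $G\in{\cal O}_{\bf Br}(k)$. First I would recall the structure of $G$: it is obtained by identifying the roots of three rooted branches $B_1,B_2,B_3\in{\bf Br}(k)$ at a common central vertex $v$, and (by the paragraph following the construction) contracting an edge of $G$ means contracting an edge inside one of the three branches, say $B_1$ (contracting the identification point with another vertex amounts to the same thing). So $G'$ consists of the two untouched branches $B_2,B_3$ together with the contracted branch $B_1'$, all glued at $v$.

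For the upper bound $\cms(G')\le k+1$: I would exhibit an explicit complete monotone connected search strategy. Start by cleaning the contracted branch $B_1'$. By Lemma~\ref{boundLem3}, there is a complete monotone connected search strategy for $B_1'$ using $k+1$ searchers that keeps a searcher on $v$ at every step. Then, with a searcher parked on $v$, clean $B_2$ and afterwards $B_3$: by Lemma~\ref{boundLem2}(b), each of $B_2,B_3$ (being a $k$-level branch) admits a complete monotone connected search strategy using $k+1$ searchers whose first cleaned edge is the trunk — hence such a strategy can be started from the configuration in which only $v$ is occupied, since the trunk of $B_i$ has $v$ as an endpoint. Concatenating these three phases gives a complete, monotone, connected search of $G'$ with at most $k+1$ searchers; connectivity is preserved because at the junction points the cleaned region always contains $v$.

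For the lower bound $\cms(G')\ge k+1$: I would argue by contradiction, supposing a complete monotone connected search ${\cal S}$ of $G'$ uses at most $k$ searchers. By connectivity the cleaned edge set grows as a connected subgraph, so there is a first moment at which a searcher is placed on the central vertex $v$; just after that moment, the monotone connected search is forced to clean at least one of the three branches incident to $v$ entirely while keeping $v$ occupied (the branch it came from is already clean, but at least one of the remaining branches — and in fact at least one intact $k$-level branch among $B_2,B_3$ if the search started inside $B_1'$, or one intact branch among $\{B_1',B_2,B_3\}$ otherwise — must be cleaned from $v$). Restricting ${\cal S}$ to that branch gives a complete monotone connected search of a $k$-level branch that uses at most $k$ searchers and whose first cleaned edge (among that branch's edges) is its trunk — contradicting Lemma~\ref{boundLem0}. (If the search starts inside $B_1'$ itself, then before $v$ is first occupied we are cleaning $B_1'$, which is a contraction of a $k$-level branch; one must check Lemma~\ref{boundLem0} still applies after one edge-contraction, which is immediate since contracting an edge cannot increase the connected monotone search number, by Lemma~\ref{lem:cs-clos} together with the monotone analogue used implicitly in Corollary~\ref{boundCor1}.)

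The main obstacle, and the step I would treat most carefully, is the lower-bound argument: pinning down precisely the claim that ``the first time $v$ is occupied, some intact $k$-level branch must subsequently be swept from $v$ keeping a guard on $v$,'' and in particular ensuring that the restriction of ${\cal S}$ to that branch really is a valid complete monotone connected search whose first in-branch move cleans the trunk. One has to rule out the degenerate possibility that ${\cal S}$ never places a searcher on $v$ (impossible, since $G'$ is connected and has edges on all sides of $v$, so to clean everything monotonically and connectedly $v$ must be occupied at some step), and the possibility that recontamination is used (excluded by monotonicity). Once these are handled, the contradiction with Lemma~\ref{boundLem0} is immediate, and combined with Corollary~\ref{boundCor1} (which gives $\cms(G)>k+1$, hence $G$ itself is not a witness) this establishes that every proper contraction $G'$ of $G$ satisfies $\cms(G')=k+1$, so $G\in\obs_{\preceq}({\cal G}[\cms,k+1])$ as claimed.
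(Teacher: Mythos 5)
There is a genuine gap in your upper-bound argument, and it lies precisely in the order in which you clean the three branches. You clean the contracted branch $B_1'$ first (via Lemma~\ref{boundLem3}, keeping a guard on $v$), and then propose to clean the two intact branches $B_2$ and $B_3$ one after the other ``with a searcher parked on $v$'', invoking Lemma~\ref{boundLem2}(b). But while $B_2$ is being cleaned, the third branch $B_3$ is still entirely dirty and its trunk is incident to $v$, while the already-clean edges of $B_1'$ are also incident to $v$; so monotonicity forces a searcher to sit on $v$ during the \emph{entire} cleaning of $B_2$. The strategy of Lemma~\ref{boundLem2}(b) does not provide this: its very first move slides the searcher off $v$ along the trunk, and it genuinely needs all $k+1$ searchers inside the branch. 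If you instead keep a permanent guard on $v$, you are cleaning an intact $k$-level branch trunk-first with only $k$ free searchers, which is impossible: after the trunk $\{v,u\}$ is cleaned a second guard must stay on $u$ until one of the two $(k-1)$-level sub-branches is completely clean, and that sub-branch must then be cleaned trunk-first with $k-1$ searchers, contradicting Lemma~\ref{boundLem0}. So your middle phase costs $k+2$ searchers, not $k+1$. The paper's proof avoids this by choosing the order so that the \emph{contracted} branch is cleaned in the middle: first an intact branch is cleaned \emph{towards} $v$ with its trunk last (Lemma~\ref{boundLem2}(c), no guard needed at $v$ until the very end), then the contracted branch is cleaned while guarding $v$ (Lemma~\ref{boundLem3}, which is exactly the statement that the contraction buys you this guard for free), and only then is the last intact branch cleaned outward from $v$ trunk-first (Lemma~\ref{boundLem2}(b)) --- at which point the guard on $v$ may safely leave because every other edge at $v$ is already clean. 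The asymmetry between Lemma~\ref{boundLem2}(a) ($k+2$ searchers to guard the root of an intact branch throughout) and Lemma~\ref{boundLem3} ($k+1$ suffice after one contraction) is the whole point of the construction, and your ordering discards it.

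On the lower bound: the paper's own proof in fact only establishes $\cms(G')\le k+1$, which is all that is needed (together with Corollary~\ref{boundCor1}) to conclude that $G$ is an obstruction; your sketch of $\cms(G')\ge k+1$ is in the spirit of Corollary~\ref{boundCor1} and is plausible for a single edge contraction, but note that the equality cannot hold for arbitrary iterated contractions (contract $G$ to a triangle), so this direction should in any case be restricted to one contraction. This part is a reasonable supplement; the upper bound is where your proof breaks.
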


\begin{proof}
It suffices to prove this claim for a single edge contraction.
Let $G\in {\cal O}_{\bf Br}(k)$, let $B_1$, $B_2$, and $B_3$ be the three $k$-level obstruction-
branches of $G$ connected to $v$, $e\in E(G)$ the edge contracted and $G'$ the graph obtained 
from $G$ after the contraction of $e$. We can assume that $e\in E(B_2)$. We are going to describe a 
complete monotone and connected search strategy $\cal S$ for $G$. From the third claim of Lemma~
\ref{boundLem2} we know that there exist a complete monotone and connected search strategy 
${\cal S}_1$ for $B_1$ that uses $k+1$ searchers, such that the last edge cleaned is the trunk of 
$B_1$. From Lemma~\ref{boundLem3} we know that there exist a complete monotone and connected 
search strategy ${\cal S}_2$ for $B_2$ that uses $k+1$ searchers, such that in each step a searcher 
occupies the root of $B_2$, in other words $v$. From the second claim of Lemma~\ref{boundLem2} 
we know that there exist a complete monotone and connected search strategy ${\cal S}_3$ for $B_3$ 
that uses $k+1$ searchers, such that the first edge cleaned is the trunk of $B_3$. Therefore, we can 
clean $G'$ starting by cleaning $B_1$ according to ${\cal S}_1$ (notice that the trunk of $B_1$ will be 
the last edge of $E(B_1)$ being cleaned), then clean $B_2$ according to ${\cal S}_2$, keeping in all 
times a searcher in $v$, and finish by cleaning $B_3$ according to ${\cal S}_3$.
\end{proof}

Combining Corollaries~\ref{boundCor1} and~\ref{boundCor2} we conclude that every graph in 
${\cal O}_{\bf Br}(k)$ is a contraction obstruction for the graph class ${\cal G}[\cms,k+1]$ 
and therefore ${\cal O}_{\bf Br}(k)\subseteq \obs_{\preceq}({\cal G}[\cms,k+1])$.

\nocite{*} 
￼\bibliographystyle{elsarticle-num}

\end{document}